\documentclass[10pt]{amsart}
\usepackage[leqno]{amsmath}
\usepackage{amsthm}
\usepackage{amsfonts}
\usepackage{amssymb}
\usepackage{amsmath,graphicx}
\usepackage{eucal}
\usepackage{ stmaryrd }
\usepackage{xcolor}
\usepackage{dirtytalk}
\usepackage[all]{xy}
\CompileMatrices

\newcommand{\z}{\mathbb{Z}}
\newcommand{\zp}{\mathbb{Z}_p}
\newcommand{\q}{\mathbb{Q}}
\newcommand{\qp}{\mathbb{Q}_p}
\newcommand{\bbc}{\mathbb{C}}

\newcommand{\real}{\mathbb{R}}
\newcommand{\mm}{\mathfrak{m}}
\newcommand{\mmk}{\mathfrak{m}_K}
\newcommand{\mml}{\mathfrak{m}_L}
\newcommand{\mmc}{\mathfrak{m}_{\mathbb{C}_K}}
\newcommand{\mmp}{\mathfrak{m}_{\mathbb{C}_p}}
\newcommand{\fp}{\mathbb{F}_p}

\newcommand{\fqf}{\mathbb{F}_{q^f}}
\newcommand{\mup}{\mu_{[\pi^\infty]}}
\newcommand{\mupn}{\mu_{[\pi^n]}}
\newcommand{\px}{[\pi](X)}
\newcommand{\pn}{[\pi^n]}
\newcommand{\lb}{\left (}
\newcommand{\rb}{\right )}
\newcommand{\llb}{\llbracket}
\newcommand{\rrb}{\rrbracket}
\newcommand{\lpi}{\log_{[\pi]}}
\newcommand{\dpi}{\mathcal{D}_{K}}
\newcommand{\dpil}{\mathcal{D}_{K}(L)}

\newcommand{\llf}{\left \lfloor}
\newcommand{\rrf}{\right \rfloor}

\newcommand{\sm}{\setminus}

\newcommand{\fpf}{\mathbb{F}_{p^f}}
\newcommand{\Char}{\operatorname{char}}
\newcommand{\ltml}{\mathcal{F}(\mathfrak{m}_L)}
\newcommand{\ltmn}{\mathcal{F}(\mathfrak{m}_{K_{\pi^n}})}
\newcommand{\ltmli}{\mathcal{F}(\mml^i)}
\newcommand{\ltmlii}{\mathcal{F}(\mml^{i+1})}
\newcommand{\ltmlqi}{\mathcal{F}(\mml^{qi})}
\newcommand{\ltmlqii}{\mathcal{F}(\mml^{qi+1})}
\newcommand{\ltmlj}{\mathcal{F}(\mml^j)}
\newcommand{\ltmljj}{\mathcal{F}(\mml^{j+1})}
\newcommand{\ltmlqj}{\mathcal{F}(\mml^{qj})}
\newcommand{\ltmlqjj}{\mathcal{F}(\mml^{qj+1})}
\newcommand{\ltm}{\mathcal{F}(\mathfrak{m}_{\bbc_K})}
\newcommand{\kpn}{K_{\pi^n}}
\newcommand{\Fsum}{%
  \mathop{%
    \mathchoice
      {\ooalign{%
        $\displaystyle\sum$\cr
        \hidewidth\raisebox{-0.1in}{\hspace{0.3in}$\displaystyle F$}\hidewidth\cr
      }}%
      {\ooalign{%
        $\textstyle\sum$\cr
        \hidewidth\raisebox{-0.22in}{\hspace{0.08in}$\scriptstyle F$}\hidewidth\cr
      }}%
      {\ooalign{%
        $\scriptstyle\sum$\cr
        \hidewidth\raisebox{-0.14in}{\hspace{0.05in}$\scriptscriptstyle F$}\hidewidth\cr
      }}%
      {\ooalign{%
        $\scriptscriptstyle\sum$\cr
        \hidewidth\raisebox{-0.1in}{\hspace{0.03in}$\scriptscriptstyle F$}\hidewidth\cr
      }}%
  }\displaylimits
}

\makeatletter
\def\moverlay{\mathpalette\mov@rlay}
\def\mov@rlay#1#2{\leavevmode\vtop{%
   \baselineskip\z@skip \lineskiplimit-\maxdimen
   \ialign{\hfil$\m@th#1##$\hfil\cr#2\crcr}}}
\newcommand{\charfusion}[3][\mathord]{
    #1{\ifx#1\mathop\vphantom{#2}\fi
        \mathpalette\mov@rlay{#2\cr#3}
      }
    \ifx#1\mathop\expandafter\displaylimits\fi}
\makeatother

\theoremstyle{plain}
\newtheorem{theorem}{Theorem}[section]
\newtheorem{lemma}[theorem]{Lemma}

\newtheorem{cor}[theorem]{Corollary}

\theoremstyle{definition}
\newtheorem{remark}[theorem]{Remark}
\newtheorem{definition}[theorem]{Definition}
\newtheorem{example}[theorem]{Example}

\numberwithin{equation}{section}

\usepackage{url}
\usepackage[colorlinks=true, pdfstartview=FitV, linkcolor=blue, citecolor=blue, urlcolor=blue]{hyperref}

\usepackage{color}

\title{An $O_K$-basis for the image of a Lubin-Tate logarithm on $\pi$-regular extensions of $K$}
\author{Georgia Harbor-Collins}
\address{Department of Mathematics, 
Univ. of Connecticut, Storrs, CT 06269-3009}
\email{georgia.harbor-collins@uconn.edu}
\keywords{Lubin-Tate, formal groups, logarithm, local fields}

\begin{document}
\begin{abstract}
Let $K$ be a finite $p$-adic field with uniformiser $\pi$. In this paper we study the image of the logarithm attached to a Lubin-Tate series $\px$ on the maximal ideal of so-called \linebreak $\pi$-regular extensions of $K$; for such an extension $L|K$ we compute a basis for the additive group $\lpi(\ltml)$ as an $O_K$-module, where $\ltml$ denotes the maximal ideal $\mml$ equipped with the $O_K$-module structure coming from the formal group associated to $\px$, and determine the minimal valuation of the elements in $\lpi(\ltml)$. In the final section of this paper we discuss how some of these results extend to arbitrary finite extensions of $K$ and conclude by determining a basis of the $O_K$-module $\lpi(\ltmn)$, where $\kpn$ is the Lubin-Tate extension of level $n\geq 1$.
\end{abstract}

\maketitle

\section{Introduction}

Let $K$ be a finite extension of $\qp$ with uniformiser $\pi \in O_K$ and residue field $k$ of size $q$. Fix a Lubin-Tate series $\px \in O_K \llb X\rrb$ at $\pi$ and denote by $\lpi(X)$ the associated logarithm. Let $\bbc_K$ be the completion of an algebraic closure of $K$ and denote by $\ltm$ the maximal ideal of $\bbc_K$ equipped with the $O_K$-module structure coming from the formal group associated to $\px$. The logarithm is a surjective homomorphism $\mathcal{F}(\mmc)\longrightarrow \bbc_K$. In contrast to that, the image of a Lubin-Tate logarithm when restricted to finite extensions of $K$ is bounded; if $L$ is a finite extension of $K$ then $\lpi:\ltmli \longrightarrow \mml^i$ is an isomorphism of $O_K$-modules for all $i>\displaystyle{\frac{v_L(\pi)}{q-1}}$ where $\ltmli$ denotes the set $\mml^i$ equipped with the $O_K$-module structure coming from the formal group associated to $\px$. In particular, $\lpi(\ltml)=\mml$ if $v_L(\pi)<q-1$.

When $v_L(\pi) \geq q-1$, not much is known about the image of a Lubin-Tate logarithm when restricted to the maximal ideal of a finite extension $L|K$. In the multiplicative case, i.e., when $\pi=p$ in $\qp$ and $[p](X)=(1+X)^p-1$, $\log_{[p]}(X)$ is the usual ($p$-adic) logarithm:
$$\log_{[p]}(X)=\log(1+X)=\sum_{n\geq 1}\frac{(-1)^{n-1}}{n}X^n.$$
In some cases the image of the $p$-adic logarithm is known; $\log(1+p\zp)$ is $4\z_2$ if $p=2$, and $p\zp$ otherwise. In \cite{2adic} and \cite{cycloimage} the image of the $p$-adic logarithm on principal units is worked out for each of the seven quadratic extensions of $\q_2$, as well as the cyclotomic extensions $\qp(\zeta_p)$ where $\zeta_p$ is a primitive $p$-th root of unity. In all of these examples the image of the logarithm sits entirely within the ring of integers $O_L$, which is not usually the case. 

In \cite{ontheimage}, the precise image of a Lubin-Tate logarithm on the maximal ideal is worked out for all unramified extensions $L|K$. In this paper we determine a basis for the additive group $\lpi(\ltml)$ as an $O_K$-module for so-called \textit{$\pi$-regular} extensions of $K$ (meaning that $L$ does not contain any nontrivial zeros of $\px$), and the Lubin-Tate extensions $\kpn$. In the $\pi$-regular case, we use this basis to give an exact lower bound on the elements of $v_L(\lpi(\ltml))$. 

 The idea will be to generalise that for finite extensions of $\qp$ containing no primitive $p$-th root of unity, we know a basis of the multiplicative group $1+\mml$ as a $\zp$-module \cite[Corollary 4.3.19]{cohen}. 
\begin{theorem}
    Let $L|\qp$ be a finite extension such that $L$ does not contain a non-trivial $p$-th root of unity, and denote by $e$ and $f$ the ramification index and residue field degree of $L|\qp$. Let $\varpi$ be a uniformiser of $L$ and let $\zeta_1,\ldots,\zeta_f$ in $O_L$ be a lift of an $\fp$-basis of the residue field of $L$. The $ef$ elements
$$1+\zeta_i\pi^j \text{ with } 1\leq i \leq f, \ 1\leq j \leq \frac{pe}{p-1} \text{ and } p\nmid j$$
constitute a $\zp$-basis of the multiplicative $\zp$-module $1+\mml$. 
\end{theorem}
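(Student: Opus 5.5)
Note that in the statement $\pi$ should be read as the uniformiser $\varpi$ of $L$. The plan is to use the filtration $U_j = 1+\mml^j$ of $U_1=1+\mml$ and the standard graded pieces $U_j/U_{j+1}\cong k_L$ (additive), via $1+x\varpi^j \mapsto \bar x$. The proof has three stages.

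First, we show that $U_1$ is a finitely generated $\zp$-module of rank $ef$. Since $L$ has no nontrivial $p$-th root of unity, $U_1$ is torsion-free, because any torsion in $U_1$ is a $p$-power root of unity. It is also pro-$p$ and topologically finitely generated. So it is a free $\zp$-module, and its rank is $[L:\qp]=ef$: the logarithm identifies $U_j$ with $\mml^j$ for large $j$, and $\mml^j\cong\zp^{ef}$. Consequently it suffices to show that the $ef$ listed elements generate $U_1$ topologically as a $\zp$-module. Indeed, a surjection $\zp^{ef}\to U_1\cong\zp^{ef}$ is an isomorphism, because a surjective endomorphism of a finitely generated module over a Noetherian ring is injective. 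By Nakayama's lemma for compact $\zp$-modules, generation is equivalent to the images spanning $U_1/U_1^p$ over $\fp$, a space of dimension $ef$ by torsion-freeness.

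Second, we analyse the $p$-th power map on the filtration. Write $p=u\varpi^e$ with $u$ a unit. For $x\in O_L$ we have
$$(1+x\varpi^j)^p \equiv 1 + x^p\varpi^{pj} + p x\varpi^j \pmod{\text{higher terms}},$$
with the three regimes:
\begin{itemize}
\item $j<\frac{e}{p-1}$: the congruence is $\equiv 1+x^p\varpi^{pj}$ mod $U_{pj+1}$;
\item $j>\frac{e}{p-1}$: it is $\equiv 1+ux\varpi^{j+e}$ mod $U_{j+e+1}$, so $U_j^p=U_{j+e}$ and $p$-th powers give all of $U_{j+e}$ by successive approximation;
\item $j=\frac{e}{p-1}$: the leading term is governed by $\bar x^p+\bar u\bar x$.
\end{itemize}
In the boundary case, the additive map $\bar x\mapsto \bar x^p+\bar u\bar x$ on $k_L$ is injective exactly when $-\bar u$ is not a $(p-1)$-th power in $k_L$. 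That in turn holds exactly when $L$ contains no primitive $p$-th root of unity, by a standard Hensel argument with $(1+y)^p=1$. So in the boundary case this map is bijective.

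Third, we do the counting and generation. Let $H$ be the closed subgroup generated by the listed elements together with $U_1^p$. We show by descending induction on the filtration that $U_j\subset H\cdot U_{j+1}$ for all $j\geq1$; for $j$ large, $U_j\subset U_1^p$ by the second stage. Fix $j$. If $p\nmid j$ and $j\le \frac{pe}{p-1}$, the elements $1+\zeta_i\varpi^j$ reduce to an $\fp$-basis of $U_j/U_{j+1}\cong k_L$, and $\zp$-combinations of them give all of it. If $j=pm$ with $m<\frac{e}{p-1}$, then $U_j/U_{j+1}$ is hit by $p$-th powers of $U_m$, by the first regime and surjectivity of Frobenius on $k_L$. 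If $j> \frac{pe}{p-1}$, then $j-e>\frac{e}{p-1}$ and $U_j\subset U_{j-e}^p$. Finally, if $j=\frac{pe}{p-1}$ (which forces $p\mid j$), this quotient is covered by the boundary bijectivity with $m=\frac{e}{p-1}$. Hence $H=U_1$, and the listed elements generate $U_1/U_1^p$. There are $ef$ of them: the number of $j\le \frac{pe}{p-1}$ prime to $p$ is $e$, since $\lfloor pe/(p-1)\rfloor-\lfloor e/(p-1)\rfloor=e$. So they form an $\fp$-basis of $U_1/U_1^p$, and the first stage finishes the proof.

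The main obstacle is the boundary index $j=pe/(p-1)$. This is exactly where the hypothesis on roots of unity enters, through the bijectivity of $\bar x\mapsto\bar x^p+\bar u\bar x$. I would verify that equivalence carefully, by showing that a nontrivial kernel element lifts via Hensel's lemma to a root of $((1+y)^p-1)/y$ in $L$.
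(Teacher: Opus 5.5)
Your proof is correct and follows essentially the paper's route: the paper only cites this result from Cohen, but it reproduces the same argument in the Lubin--Tate setting. There, Theorems \ref{FV}, \ref{regisolem} and \ref{FV3} give the three regimes of the $p$-th power map on the graded pieces, the torsion hypothesis enters only at the boundary level via Lemma \ref{equivdefofreg}, and Theorem \ref{basisthm1} concludes by Nakayama, the count $\lfloor pe/(p-1)\rfloor-\lfloor e/(p-1)\rfloor=e$, and freeness of rank $ef$. You also correctly read the $\pi$ in the statement as the uniformiser $\varpi$, and your successive-approximation step for $j>pe/(p-1)$ is, if anything, more careful than the paper's corresponding ``suppress by Nakayama'' step.
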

In particular, since the logarithm is injective $1+\mml \longrightarrow \log(1+\mml)$ whenever $L$ does not\linebreak contain a primitive $p$-th root of unity, this basis of $1+\mml$ is mapped to a basis of $\log(1+\mml)$. In Section \ref{pireg} we generalise \cite[Corollary 4.3.19]{cohen} to give a basis of $\ltml$ as an $O_K$-module in the case that $L$ does not contain any nontrivial zeros of $\px$. By an analogous argument to before, this allows us to obtain a basis of the additive group $\lpi(\ltml)$ as an $O_K$-module. Here are our main results.
\begin{theorem}[Theorem \ref{basisthm2} below]\label{thm1}
    For a $\pi$-regular extension $L|K$ with uniformiser $\varpi$ and residue field degree $f$, the set of $[L:K]$ elements 
    $$  \widetilde{\mathcal{B}}_L:=\left\{\lpi(\zeta_i \varpi^j) \mid 1\leq i \leq f, \, 1\leq j \leq \frac{qv_L(\pi)}{q-1}, \, q \nmid j \right \}$$
is a basis of the additive group $\lpi(\ltml)$ as an $O_K$-module, where the $\zeta_i$'s in $O_L$ are any lift of a $k$-basis for $k_L$.
\end{theorem}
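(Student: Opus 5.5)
The plan is to prove first that the set $\mathcal{B}_L:=\{\zeta_i\varpi^j \mid 1\leq i\leq f,\ 1\leq j\leq \frac{qv_L(\pi)}{q-1},\ q\nmid j\}$ is an $O_K$-basis of $\ltml$, and then transport it by the logarithm. For the transport step, note that $\lpi:\ltml\to\lpi(\ltml)$ is a surjective $O_K$-module homomorphism. Its kernel consists of the torsion points of the formal group lying in $\mml$, and these are exactly the zeros of the iterates $[\pi^n](X)$. If $L$ is $\pi$-regular, $\ltml$ has no nonzero $\pi$-torsion, hence no torsion at all. So $\lpi$ is an isomorphism onto its image, and it carries an $O_K$-basis to an $O_K$-basis. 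The count $\#\mathcal{B}_L=[L:K]$ is elementary. Write $e=v_L(\pi)$. The index $j$ runs over $1\le j\le qe/(q-1)$ with $q\nmid j$, and there are exactly $e$ such $j$. Together with the $f$ choices of $\zeta_i$ this gives $ef=[L:K]$ elements.

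For the main claim, study the filtration $\ltmli$ and the graded pieces $\ltmli/\ltmlii\cong k_L$. Formal addition agrees with ordinary addition modulo higher-order terms. I would establish three facts.
\begin{enumerate}
\item $[\pi](x)=\pi x+(\text{terms of degree}\ge 2)+x^q u(x)$. Hence for $v_L(x)=i$ the valuation of $[\pi](x)$ is $\min(e+i,\,qi)$ whenever $e+i\ne qi$. The leading term is $\pi x$ if $i>e/(q-1)$, and $x^q$ (times a unit) if $i<e/(q-1)$.
\item For $i>e/(q-1)$ the map $[\pi]$ induces an isomorphism $\ltmli/\ltmlii\to \mathcal{F}(\mml^{i+e})/\mathcal{F}(\mml^{i+e+1})$.
\item For $i<e/(q-1)$ it induces the $q$-power Frobenius map $k_L\to k_L$ from level $i$ to level $qi$.
\end{enumerate}
The critical level $i_0=e/(q-1)$, when it is an integer, is where $\pi$-regularity enters. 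At that level the induced map is $\bar a\mapsto \bar c\bar a+\bar a^q$ for a suitable residue $\bar c$. This map is additive, and it is injective precisely when there is no nontrivial $\pi$-torsion, by a Hensel/Newton polygon argument on $[\pi](X)$. So $\pi$-regularity makes it bijective.

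Next comes generation. Using these facts, every level $j\ge 1$ is hit exactly once by a pair consisting of a basic level $j_0$ with $q\nmid j_0$ and $j_0\le qe/(q-1)$, together with an operation $[\pi^m]$. Levels divisible by $q$ below $qe/(q-1)$ are reached from level $j/q$ via Frobenius, which is bijective on the finite field. Levels above $qe/(q-1)$ come from level $j-e$ via multiplication by $\pi$. The critical level and its image are handled by the bijectivity shown above. A successive approximation argument then works by completeness. Given $x\in\mml$, one subtracts (in the formal sense) suitable $[a]\,\zeta_i\varpi^{j_0}$ with $a\in O_K$ level by level, using that the $\zeta_i$ reduce to a $k$-basis of $k_L$ and that Teichmüller-type coefficients in $O_K$ give all of $k$. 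This writes $x$ as a convergent formal $O_K$-combination of $\mathcal{B}_L$. For independence, it suffices to show that $\ltml$ is torsion-free of rank $[L:K]$. Via $\lpi$ it is commensurable with $\mml$, so it has rank $[L:K]$. A generating set of that size of a finitely generated torsion-free module over a PID is then a basis.

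The main obstacle I expect is the bookkeeping of levels together with the critical level. One must check that the leading term of $[\pi^m]([a]\zeta\varpi^{j_0})$ lands at a level with the correct residue, and that distinct basic pairs never collide. The hardest single point is proving that the level-$i_0$ map $\bar a\mapsto \bar c\bar a+\bar a^q$ has trivial kernel exactly under $\pi$-regularity. I would handle it by lifting a nonzero kernel element to a root of $[\pi](X)$ in $\mml$ via Hensel's lemma applied to $[\pi](X)/X$.
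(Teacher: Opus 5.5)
Your proposal is correct and is essentially the paper's argument. You filter $\ltml$ by the $\mathcal{F}(\mml^j)$, use the $[\pi]$-induced isomorphisms between graded pieces (Frobenius below $i_0=v_L(\pi)/(q-1)$, a shift by $v_L(\pi)$ above it, with $\pi$-regularity used only at level $i_0$) to cut the generating system down to $\mathcal{B}_L$, get independence from $\ltml$ being free of rank $[L:K]$, and transport the basis by injectivity of $\lpi$ on the torsion-free module $\ltml$. The differences are minor: you use explicit successive approximation where the paper invokes Nakayama's lemma, and at level $i_0$ you apply Hensel's lemma directly to $[\pi](X)$ instead of passing to $X^q+\pi X$ via $\theta$. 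That works only if you apply it to the rescaled series $\varpi^{-qi_0}[\pi](\varpi^{i_0}Y)/Y$, whose reduction $Y^{q-1}+\bar c$ is separable; the naive criterion $v_L(f(x_0))>2v_L(f'(x_0))$ for $f=[\pi](X)/X$ can fail once $q\ge 4$.
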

\begin{theorem}[Theorem \ref{minvalthm} below]\label{thm2}
    Let $L|K$ be $\pi$-regular and suppose $\displaystyle{\frac{v_L(\pi)}{q-1}}\geq 1$, so 
\begin{equation}\label{g.11}
    q^{\gamma-1}\leq \frac{v_L(\pi)}{q-1}<q^\gamma
\end{equation}
    for a unique integer $\gamma \geq 1$. Then, for any prime $\varpi$ of $L$,
    \begin{equation}\label{g.2}
      \min_{y \in \lpi(\ltml)}(v_L(y))=v_L(\lpi(\varpi))=q^{\gamma}-\gamma v_L(\pi).
    \end{equation}
    In particular, the smallest disc containing $\lpi(\ltml)$ is $\{x \in L \mid v_L(x) \geq q^{\gamma}-\gamma v_L(\pi)\}$.
\end{theorem}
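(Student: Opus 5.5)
We first reduce to a convenient Lubin--Tate series, then compute $v_L(\lpi(x))$ for every $x\in\mml$ by iterating $\px$, and finally minimise the resulting formula over $v_L(x)$. Throughout, write $e=v_L(\pi)$ and $v=v_L$.

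\emph{Reduction.} Any two Lubin--Tate series at $\pi$ give formal groups isomorphic over $O_K$. The isomorphism $\theta(X)\in O_K\llb X\rrb$ satisfies $\theta(X)\equiv X \bmod X^2$, and $\log_f=\log_g\circ\theta$. Since $\theta$ maps $\mml$ bijectively onto itself and preserves valuations, both the image $\lpi(\ltml)$ and the value $v(\lpi(\varpi))$ are independent of the choice of series. The $\pi$-regularity hypothesis is also unaffected, because $\theta$ carries $\pi$-torsion to $\pi$-torsion. So we may assume $\px=\pi X+X^q$.

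\emph{One step of $\px$.} For $y\in\mml$, $v([\pi](y))=\min(e+v(y),\,qv(y))$ whenever $e+v(y)\neq qv(y)$. In the tie case $v(y)=e/(q-1)$ we still get $v([\pi](y))=qv(y)$, as follows. If cancellation occurred, the residue of $y^{q-1}/(-\pi)$ would be $1$. Applying Hensel's lemma to $[\pi](X)/X=\pi+X^{q-1}$ in the variable $X=yU$ then produces a nonzero root of $\px$ in $L$; here the reduced polynomial $U^{q-1}-1$ has the simple root $1$ since $p\nmid q-1$. This contradicts $\pi$-regularity.

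\emph{Iterating.} Take $x\in\mml$ with $v(x)=a\geq1$, and let $k\geq0$ be minimal with $q^ka\geq e/(q-1)$. Iterating the one-step rule gives $v([\pi^n](x))=q^ka+(n-k)e$ for all $n\ge k$; in particular no $[\pi^n](x)$ vanishes, again by $\pi$-regularity. For $n$ large, $v([\pi^n](x))>e/(q-1)$, so on that disc $\lpi$ is valuation-preserving. Since $\lpi(x)=\pi^{-n}\lpi([\pi^n](x))$, we obtain
\[
v(\lpi(x))=q^ka-ke .
\]
For $x=\varpi$ we have $a=1$, and the hypothesis $q^{\gamma-1}\le e/(q-1)<q^\gamma$ forces $k=\gamma$; in the equality case $k=\gamma-1$, which gives the same value $q^{\gamma-1}-(\gamma-1)e=q^\gamma-\gamma e$. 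Hence $v(\lpi(\varpi))=q^\gamma-\gamma e$.

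\emph{Minimisation.} For a general $a\ge1$ the corresponding $k$ satisfies $k\le\gamma$, and we need $q^ka-ke\ge q^\gamma-\gamma e$. Since $a\geq 1$, it suffices to show $q^\gamma-q^k\le(\gamma-k)e$. This holds because
\[
q^\gamma-q^k=(q-1)(q^k+\cdots+q^{\gamma-1})\le(\gamma-k)(q-1)q^{\gamma-1}\le(\gamma-k)e .
\]
Every element of $\lpi(\ltml)$ is of the form $\lpi(x)$ and is nonzero for $x\neq0$, so the minimum is attained at $\varpi$. This gives \eqref{g.2} and the statement about the smallest disc.

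The main obstacle is the boundary (tie) case, where cancellation could in principle raise the valuation. It is handled by the Hensel argument above, and this is exactly where $\pi$-regularity enters.
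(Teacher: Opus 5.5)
Your proof is correct, but the minimisation step follows a genuinely different route from the paper.

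\textbf{The valuation formula.} Your formula $v_L(\lpi(x))=q^ka-kv_L(\pi)$, with $k$ minimal such that $q^ka\ge v_L(\pi)/(q-1)$, is essentially the paper's Theorem~\ref{genlogvallem}. Your one-step rule, including the Hensel argument at the tie, is the content of Theorems~\ref{FV}, \ref{regisolem} and \ref{FV3}.

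Your threshold is non-strict, whereas the paper's $\ell_x$ is strict (it asks for $[\pi^{\ell}](x)\in\dpil$). The two differ exactly when $q^ka=v_L(\pi)/(q-1)$, and there the two expressions coincide. This is why you need the small boundary check $k=\gamma-1$ when $v_L(\pi)=(q-1)q^{\gamma-1}$, while the paper always has $\ell_\varpi=\gamma$.

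\textbf{The minimisation.} The paper first reduces to the elements $\lpi(\varpi^j)$, with $1\le j\le qv_L(\pi)/(q-1)$ and $q\nmid j$. This reduction uses the basis of Theorem~\ref{basisthm2} together with Corollary~\ref{dontcareaboutunit}. The paper then splits into two cases, $j>v_L(\pi)/(q-1)$ and $j\le v_L(\pi)/(q-1)$, the latter with sub-cases according to whether $\ell_{\varpi^j}=\gamma$ or $\ell_{\varpi^j}<\gamma$.

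You instead note that the formula holds for every $x\in\mml$ and depends only on $a=v_L(x)$. This lets you minimise over all of $\mml$ at once, and the required inequality reduces to the monotonicity of $k\mapsto q^k-kv_L(\pi)$ on $\{0,\dots,\gamma\}$.

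\textbf{What each route buys.} Your route shows that the $O_K$-basis, and with it the Nakayama argument and Theorem~\ref{decomplemma}, is logically unnecessary for Theorem~\ref{minvalthm}. It also replaces the case split with one telescoping estimate. The paper's route fits its narrative of computing the minimum on an explicit basis.

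\textbf{A point of order.} Your reduction paragraph asserts that $v_L(\lpi(\varpi))$ does not depend on the choice of Lubin--Tate series. However, $\theta$ sends $\varpi$ to a different prime $\theta(\varpi)$. So this independence only follows once your formula shows that $v_L(\lpi(x))$ depends on $v_L(x)$ alone. Since it does, nothing is lost.
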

As a consequence of Theorem \ref{thm2} we note that for a $\pi$-regular extension $L|K$
$$\lpi(\ltml) \subseteq O_L \iff q^{\gamma}-\gamma v_L(\pi) \geq 0 \iff v_L(\pi)\leq\frac{q^\gamma}\gamma ,$$
where $\gamma$ is defined as in Theorem \ref{thm2}. Explicitly, $\displaystyle{\gamma:=\llf \log_q\lb\frac{v_L(\pi)}{q-1}\rb\rrf+1.}$

Whilst $\pi$-regular extensions are the primary focus of this paper, Section \ref{extension} discusses how some of our results generalise to arbitrary ramified extensions $L|K$; if $L$ is a finite extension containing nontrivial zeros of $\px$ then $\ltml$ contains torsion and in particular, does not admit an $O_K$-basis. In Theorem \ref{genspanthm} we compute a spanning set for $\ltml$ as an $O_K$-module, that is \textit{minimal} for totally ramified extensions and use this to deduce the following. 
\begin{theorem}[Theorem \ref{ltbasisthm} below]\label{thm3}
       Fix $n\geq 1$ and a generator $\lambda$ of the $O_K$-module $\mupn$.  The set of $q^n-q^{n-1}$ elements 
    $$\mathcal{B}_n:=\{\lpi(\lambda^j) \mid 2\leq j \leq q^n-1, \ q\nmid j\} \cup \{\lpi(\lambda^{q^n})\}$$
    is a basis of the additive group $\lpi(\ltmn)$ as an $O_K$-module. 
\end{theorem}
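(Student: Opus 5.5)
Since $\lpi$ is an $O_K$-module homomorphism $\ltmn \to \kpn$ with kernel $\mupn$ (the torsion of $\ltmn$, all of which lies in $\kpn$), the image $\lpi(\ltmn)$ is isomorphic to $\ltmn/\mupn$. The plan is therefore to produce an $O_K$-basis of this quotient and then apply $\lpi$ to it. In $\kpn$ the uniformiser $\lambda$ has $v(\pi)=e=q^{n-1}(q-1)$, $f=1$, and $q e/(q-1)=q^n$. So the natural candidate spanning set, coming from the spanning set of Theorem \ref{genspanthm}, is $\{\lambda^j : 1\le j\le q^n,\ q\nmid j\}\cup\{\lambda^{q^n}\}$. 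This set has $q^n-q^{n-1}+1$ elements, and the statement says that $\lambda$ itself is dropped. That is consistent with the fact that the image is torsion-free of rank $[\kpn:K]=q^n-q^{n-1}$.

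\textbf{Step 1 (spanning).} I will use Theorem \ref{genspanthm}, which the paper says gives a minimal spanning set for totally ramified $L$. This yields that $\{\lambda^j : 1\le j\le q^n-1,\ q\nmid j\}\cup\{\lambda^{q^n}\}$ spans $\ltmn$ over $O_K$. The mechanism behind it is the usual filtration argument on $\ltmli/\ltmlii$. For $i<e/(q-1)=q^{n-1}$, the map $[\pi]$ sends level $i$ to level $qi$ via $x\mapsto x^q$ on leading terms. For $i>q^{n-1}$ it sends level $i$ to level $i+e$. At the critical level $i=q^{n-1}$ both terms have the same valuation, so the image of that level is a proper subgroup; this is exactly why one extra generator at level $q^n$ is needed. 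Applying $\lpi$ then gives that $\mathcal{B}_n\cup\{\lpi(\lambda)\}$ spans $\lpi(\ltmn)$.

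\textbf{Step 2 (removing $\lpi(\lambda)$).} I must show that $\lpi(\lambda)$ lies in the $O_K$-span of the other elements. The natural tool is a relation coming from torsion. Since $[\pi^n](\lambda)=0$, we can pick a distinguished polynomial of the form $[\pi](X)=\pi X+X^q$ (which is allowed, since $\lpi(\ltmn)$ does not depend on the choice of $\px$ up to isomorphism, and $\lambda$ may be chosen to suit this). Then $\mu:=[\pi^{n-1}](\lambda)$ is a nonzero root of $\pi X+X^q$, so $\mu^{q-1}=-\pi$. I would expand $[\pi^{n-1}](\lambda)=\mu$ and, more usefully, the relation $\lambda^{q}=[\pi](\lambda)-_F\pi\lambda$, in terms of the formal-group combinations of the generators. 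The aim is to express $\pi\cdot\lambda$, or a unit multiple of $\lambda$, via the others modulo torsion. Concretely: the obstruction at the critical level is a rank-one condition, so modulo $\mupn$ the element $\lambda$ becomes dependent on $\lambda^{q^n}$ and on the lower generators. Equivalently, $\lpi(\lambda)$ is a $K$-linear combination of the others with $O_K$ coefficients, and the leading coefficient on $\lpi(\lambda^{q^n})$ or on another generator must be checked to be a unit multiple so that the relation can be solved for $\lpi(\lambda)$ with integral coefficients.

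\textbf{Step 3 (independence).} Once $\mathcal{B}_n$ spans, it has exactly $q^n-q^{n-1}=\operatorname{rank}_{O_K}\lpi(\ltmn)$ elements. The image is a finitely generated torsion-free module, hence free, and it spans $\kpn$ over $K$ because $\lpi$ is a local isomorphism on $\ltmli$ for large $i$. A spanning set of size equal to the rank of a free module over the PID $O_K$ is automatically a basis. The main obstacle is Step 2: making the torsion relation explicit enough to show that $\lpi(\lambda)$ is an integral (not merely rational) combination of the others. I would attack it first for $n=1$, where $\lambda^{q-1}=-\pi$ gives $\lpi(\lambda^q)=\lpi(-\pi\lambda)$, and then induct on $n$ through $\lambda\mapsto[\pi](\lambda)$.
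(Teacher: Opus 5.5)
Your Steps 1 and 3 are the paper's argument. You push the spanning set $\mathcal{S}_n$ of Corollary \ref{mincor} forward under $\lpi$. Then you use that a spanning set of $q^n-q^{n-1}$ elements of the free $O_K$-module $\lpi(\ltmn)$, which has rank $[\kpn:K]=q^n-q^{n-1}$ by Theorem \ref{freelem}, is a basis.

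\textbf{The gap is Step 2.} You leave it open as ``the main obstacle'' and only sketch a torsion relation whose integrality you never establish. The missing observation is one line. $\lambda$ is a generator of $\mupn$, so in particular $\lambda\in\mupn$, i.e.\ $\pn(\lambda)=0$. Hence $\pi^n\lpi(\lambda)=\lpi(\pn(\lambda))=0$, and so $\lpi(\lambda)=0$ (Lemma \ref{kernel}). A zero element contributes nothing to an $O_K$-span, so it can be deleted from $\lpi(\mathcal{S}_n)$ with no relation to check; this is the paper's entire Step 2. You state the needed fact in your own first sentence (the kernel of $\lpi$ on $\ltmn$ is $\mupn$). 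But in Step 2 you treat $\lambda$ as a non-torsion element that merely ``becomes dependent'' on $\lambda^{q^n}$ and the lower generators modulo $\mupn$. Modulo $\mupn$ it is simply $0$, and there is no unit-coefficient condition to verify.

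\textbf{The route you sketch for Step 2 would also not work as written.} The statement concerns a fixed series $\px$ and an arbitrary generator $\lambda$. If $\theta$ is the isomorphism from the basic series $X^q+\pi X$ and $\mu=\theta^{-1}(\lambda)$, then $\theta$ does not commute with ordinary powers: $\theta(\mu^j)\neq\theta(\mu)^j$ in general. So the elements $\lpi(\lambda^j)$ are not carried to elements of the same shape for the basic series, and you cannot choose the series and $\lambda$ to suit yourself. The identity $\lambda^{q}=[\pi](\lambda)-_F\pi\lambda$ also mixes formal-group subtraction with ordinary multiplication. For the basic series the correct identity is $\lambda^q=[\pi](\lambda)-\pi\lambda$ in ordinary arithmetic. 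None of this machinery is needed once you note $\lpi(\lambda)=0$.
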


The paper is organised as follows. Sections \ref{notations} and \ref{background} provide notations and the necessary background from Lubin-Tate theory. Section \ref{pireg} introduces the notion of a $\pi$-regular extension of $K$ and presents an $O_K$-basis of $\ltml$ in Theorem \ref{basisthm1}. Section \ref{main} contains the proofs of Theorems \ref{thm1} and \ref{thm2} and concludes with an example in the usual multiplicative setting over $\qp$. Finally, Section \ref{extension} discusses how some of these results generalise to finite extensions $L|K$ containing nontrivial zeros of $[\pi](X)$, and concludes with the proof of Theorem \ref{thm3}.

{\sc Acknowledgements}. I'd like to thank my advisor, Keith Conrad, for his helpful suggestions and thoughtful feedback.

\section{Notations}\label{notations}
\begin{tabular}{l|l|l}
Notation & Meaning & Equation \\
\hline
$ K$ & a finite extension of $\qp$ with prime element $\pi$ and residue field $k \cong \mathbb{F}_q$ & \\
$L$ & a finite extension of $K$ with maximal ideal $\mml=\varpi O_L$ and residue field $k_L \cong \fqf$ \\
$v_K$, $v_L$ & the normalised valuations on $K$ and $L$, respectively & \\ 
$\px$ & a Lubin-Tate series for the prime $\pi$ in $O_K$ & (\ref{ltseries}) \\
$F(X,Y)$ & the unique formal group law commuting with $\px$ & (\ref{fgl1})-(\ref{fgcomwithlt}) \\
$\ltml$ & the $O_K$-module $\mml$ with the Lubin-Tate module structure based on $F(X,Y)$& \\
    $\mu_{[\pi^n]}$ & the $[\pi^n]$-torsion contained in a fixed algebraic closure of $K$ & (\ref{mun}) \\
    $\kpn$ & the Lubin-Tate extension of level $n\geq 1$ obtained by adjoining $\mupn$ to $K$ & \\
    $\mu_{[\pi^\infty]}$& all the $[\pi]$-power torsion contained in a fixed algebraic closure of $K$& (\ref{mupandl}) \\
     $\mu_{[\pi^\infty]}(L)$& all the $[\pi]$-power torsion contained in a field extension $L|K$ & (\ref{mupandl}) \\
     $\lpi(X)$ & the formal logarithm associated to $\px$ & (\ref{logcharacteristics}) \\
     $\dpi$ & the disc in $\bbc_K$ on which the formal exponential associated to $\px$ converges & (\ref{dpi})
     \\
     $\dpil$ & the disc in $L$ on which the formal exponential associated to $\px$ converges & (\ref{dpil})     
\end{tabular}

\section{Background}\label{background}
Let $K$ be a nonarchimedean local field with ring of integers $O_K$ and maximal ideal $\mmk$. Let $q$ be the number of elements in the residue field $k:=O_K/\mmk$. Fix a generator $\pi$ of the maximal ideal $\mm_K$ and a Lubin-Tate series $\px$, that is, a formal power series in $O_K\llb X \rrb$ satisfying the two conditions
\begin{equation}\label{ltseries}
    \px \equiv \pi X \bmod{X^2} \text{ and } \px \equiv X^q  \bmod{\mmk}.
 \end{equation}
 In \cite[Theorem 1]{1965}, Lubin and Tate show there is a unique formal power series $F(X,Y)$ with coefficients in $O_K$ satisfying the following four properties:
 \begin{align}
 \label{fgl1}    F(X,Y) &\equiv X + Y \bmod{\deg 2}, \\
 \label{fgl2}    F(X,Y) &= F(Y,X), \\
 \label{fgl3}    F(X,F(Y,Z)) &= F(F(X,Y),Z), \\
  \label{fgcomwithlt}
    [\pi](F(X,Y))&=F([\pi](X),[\pi](Y)).
 \end{align}
We call $F(X,Y)$ the \textit{formal group law associated to $\px$}. In \cite[Theorem 1]{1965}, Lubin and Tate show to each $a \in O_K$ there is a unique formal power series $[a](X) \in aX + X^2O_K\llb X \rrb$ that commutes with $\px$ under composition and satisfies
$$[ab] = [a] \circ [b] \text{ and } [a+b]=[a]+[b]$$
in $O_K\llb X \rrb$ for all $a, b \in O_K$. By the uniqueness property, $[\pi^n](X)$ is the $n$-fold composition of $\px$. Let $\bbc_K$ denote the completion of an algebraic closure of $K$, $O_{\bbc_K}$ be its valuation ring, and $\mmc$ be the maximal ideal of $O_{\bbc_K}$. Since $F(X,Y)$ and $[a](X)$ have coefficients in $O_K$ for each $a \in O_K$, $F(x,y)$ and $[a](x)$ converge whenever $x, y \in \mmc$ and take values in $\mmc$. Thus we can define a new $O_K$-module structure on $\mmc$ where addition is given by the formal group law $F(X,Y)$, and scalar multiplication by $a \in O_K$ is given by the action of $[a](X)$. When equipped with this Lubin-Tate structure, we will denote the $O_K$-module $\mmc$ by $\ltm$ and define $\ltmli$ similarly for a finite extension $L|K$ and $i \geq 1$. For finite $L|K$ with residue field $k_L$, there are $O_K$-module isomorphisms
\begin{equation}\label{isotores}
    \ltmli/\ltmlii\stackrel{\cong}{\longrightarrow} \mml^i/\mml^{i+1} \stackrel{\cong}{\longrightarrow} k_L\footnote{Here the quotient $\ltmli/\ltmlii$ is being taken with respect to the Lubin-Tate module structure, so a general element of this quotient looks like $x+_F\ltmlii$. In particular, $x+_F\ltmlii=y+_F\ltmlii$ if and only if $F(x,[-1](y)) \in \ltmlii$.}
\end{equation}
for each $i\geq 1$. The last map in (\ref{isotores}) is the map that sends the class of $x\varpi^i$ in $\ltmli/\ltmlii$ to $x\bmod{\mml}$. The first map in (\ref{isotores}) is the identity on coset representatives, i.e., $$x\varpi^i +_F \ltmlii \longmapsto x\varpi^i+\mml^{i+1}.$$ It is easy to check this is a well defined bijection. It is a homomorphism of $O_K$-modules: by (\ref{fgl1}),
$$u\varpi^i +_F v\varpi^i=F(u\varpi^i,v\varpi^i) \equiv u\varpi^i+v\varpi^i \bmod{\mml^{i+1}},$$
and similarly for each $a \in O_K$ we have
$[a](u\varpi^i) \equiv au\varpi^i \bmod{\mml^{i+1}}$ since $[a](X)=aX+ X^2(\cdots)$.

For each integer $n \geq 1$, define the $[\pi^n]$-torsion to be the (finite) set
\begin{equation}\label{mun}
    \mupn:=\left\{ \lambda \in \mathcal{F}(\mmc) \mid [\pi^n](\lambda)=0 \right\},
\end{equation}
and let $\mup$ denote the union of $\mu_{[\pi^n]}$ over all $n$. For a finite extension $L|K$, $\mup(L)$ will denote the set of all $[\pi]$-power torsion contained in $L$. Explicitly,
\begin{equation}\label{mupandl}
    \mup:=\bigcup_{n\geq 1}\mupn \ \text{ and } \ \mup(L):=\mup\cap L.
\end{equation}
There are analogies between $\mupn$ and the $p^n$-th roots of unity in $\bbc_K$, with $\mathcal{F}(\mmc)$ being analogous to the group $1+\mmp$ as a $\zp$-module: the set $\mupn$ is a cyclic $O_K$-module of size $q^n$, for $a \in O_K$ we have $[a](\lambda)=0$ for all $\lambda \in \mupn$ if and only if $a\equiv 0\bmod{\pi^n}$, and adjoining $\mupn$ to $K$ generates a totally ramified abelian extension of degree $q^{n-1}(q-1)$ over $K$, which we shall denote by $K_{\pi^n}$ and often refer to as the \textit{Lubin-Tate extension of level $n$}. If $\lambda$ is a generator of $\mupn$ as a cyclic $O_K$-module, then $\lambda$ is prime in $K_{\pi^n}$ \cite[Theorem 2]{1965}. 

When $\Char K=0$, the \textit{formal logarithm associated to $\px$} will be denoted by $\lpi(X)$ and this is characterised in $K\llb X \rrb$ by the two conditions 
\begin{equation}\label{logcharacteristics}
\lpi(X) \equiv X \bmod{X^2} \text{ and } \lpi(\px)=\pi\lpi(X). \footnote{More generally, whenever $F(X,Y)$ is a formal group law defined over a torsion-free ring of characteristic zero, there exists a unique formal power series $\log_F(X) \in K\llb X\rrb$ satisfying the two conditions $\log_F(X) \equiv X\bmod{X^2}$ and $\log_F(F(X,Y))=\log_F(X)+\log_F(Y)$.}
\end{equation}
The logarithm gives an $O_K$-module homomorphism from $\mathcal{F}(\mmc)$ to $\bbc_K$ \cite[Theorem 6.4, p. 132]{silverman}.
On a sufficiently small disc, the logarithm is a bijection; 
$\lpi(X)$ is an $O_K$-module isomorphism $\mathcal{F}(\dpi) \longrightarrow \dpi$ \cite[Lemma 4, p. 215]{lang} where
\begin{equation}\label{dpi}
    \dpi:=\left \{x \in \bbc_K \mid v_K(x)> \frac{1}{q-1}\right \}.
\end{equation}
Similarly for finite $L|K$, $\lpi:\mathcal{F}(\dpil) \longrightarrow \dpil$ is an isomorphism of $O_K$-modules where
\begin{equation}\label{dpil}
    \dpil:=\dpi \cap L.
\end{equation}
For example, when $K=\qp$, $\pi=p$, and $[p](X)=(1+X)^p-1$, $\log_{[p]}(X)$ is the usual $p$-adic logarithm with a shift of domain from $1+\mmp$ to $\mmp$:
$$x \mapsto\log(1+x):=\sum_{n\geq 1}(-1)^{n-1}\frac{x^n}{n}=x-\frac{x^2}{2}+\frac{x^3}{3}-\frac{x^4}{4}+\cdots$$
for $x \in \mmp$.
When restricted to the disc $1+p\zp$, the $p$-adic log is an isometry with image $p\zp$ if $p>2$. Whilst this isn't exactly true when $p=2$ due to the presence of the $2$-power root of unity $-1 \in 1+2\z_2$, the $2$-adic logarithm is an isometric group isomorphism $1+4\z_2 \longrightarrow 4\z_2$ and it maps $1+2\z_2=\pm(1+4\z_2)$ to $4\z_2$. The formal logarithm associated to a Lubin-Tate series $\px$ behaves similarly to the usual $p$-adic logarithm. Some of these similarities are described in the following lemmas.

\begin{lemma}\label{surj} 
Let $K$ be a nonarchimedean local field of characteristic zero and $\px$ be a Lubin-Tate series for a chosen prime $\pi \in O_K$. When $L|K$ is a finite extension, $\lpi(\ltml)$ is bounded in $L$, while $\lpi(\ltm)=\bbc_K$. 
\end{lemma}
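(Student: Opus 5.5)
To prove the lemma I would treat the two claims separately: boundedness over a finite extension $L$, and surjectivity onto $\bbc_K$. Both rest on the functional equation $\lpi(\px)=\pi\lpi(X)$ from (\ref{logcharacteristics}) and on the isomorphism $\lpi:\mathcal{F}(\dpi)\to\dpi$ quoted from \cite{lang}.

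\emph{Boundedness.} Fix the smallest integer $i$ with $i>v_L(\pi)/(q-1)$. Then $\mml^i$ lies inside $\dpil$, so $\lpi$ maps $\mathcal{F}(\mml^i)$ isomorphically onto $\mml^i$, as recalled in the introduction. Next I would show that $[\pi^N](x)\in\mml^i$ for every $x\in\mml$ and some fixed $N$ independent of $x$. This holds because $\px\equiv X^q \bmod \mmk$ together with $\px\equiv \pi X\bmod X^2$ gives
$$v_L([\pi](x))\geq \min\bigl(v_L(\pi)+v_L(x),\,q\,v_L(x)\bigr)>v_L(x).$$
The strict inequality, which holds in both terms since $v_L(x)\geq 1$, shows the valuation strictly increases with each application of $[\pi]$. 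Hence after at most $N=i$ iterations we land in $\mml^i$. Then
$$\pi^N\lpi(x)=\lpi([\pi^N](x))\in\mml^i,$$
so $\lpi(\ltml)\subseteq \pi^{-N}\mml^i$, which is bounded. The only care needed is that the valuation increase is uniform; the integer-valuedness of $v_L$ makes it at least $1$ per step, which is enough.

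\emph{Surjectivity.} Let $y\in\bbc_K$. Choose $n$ so large that $\pi^n y\in\dpi$. Then $\pi^n y=\lpi(z)$ for some $z\in\mathcal{F}(\dpi)$. Now I need $w\in\mmc$ with $[\pi^n](w)=z$, because then
$$\pi^n\lpi(w)=\lpi([\pi^n](w))=\pi^n y,$$
and so $\lpi(w)=y$, since $\bbc_K$ is a field of characteristic zero. To find $w$, note that $[\pi^n](X)-z$ is a power series whose linear coefficient lies in $\pi^n O_K$ and whose $X^{q^n}$ coefficient is a unit (because $[\pi^n]\equiv X^{q^n}\bmod\mmk$). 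By Weierstrass preparation it is a unit times a distinguished polynomial of degree $q^n$ with coefficients in $\mm$ of the field generated by $z$. Since $\bbc_K$ is algebraically closed, that polynomial has a root $w$, and its Newton polygon, all of whose non-leading coefficients lie in the maximal ideal, forces $v(w)>0$.

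The main obstacle is this last step. It requires justifying Weierstrass preparation over the complete but non-discretely-valued subring, or else working over the finite extension $K(z)$ when $z$ is algebraic. For general $z$ I would instead use the Newton polygon of the power series $[\pi^n](X)-z$ directly: its segment up to degree $q^n$ has positive slopes, so there is a zero in the open disc, by the standard Newton polygon theorem for power series convergent on the open unit disc over the complete algebraically closed field $\bbc_K$.
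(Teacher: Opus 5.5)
Your proof is correct. For surjectivity it has the same skeleton as the paper's: push $\pi^n y$ into $\dpi$, invert $\lpi$ there, find a $[\pi^n]$-preimage in $\mmc$, and divide by $\pi^n$. It differs in how that preimage is produced, and in that you actually prove the boundedness claim. The paper's proof never addresses boundedness; it treats only $\lpi(\ltm)=\bbc_K$ and leaves boundedness to the introductory remark that $\lpi:\mathcal{F}(\mml^i)\to\mml^i$ is an isomorphism for large $i$. Your estimate $v_L([\pi](x))\geq\min\bigl(v_L(\pi)+v_L(x),\,q\,v_L(x)\bigr)$, with its uniform $N$, gives the explicit containment $\lpi(\ltml)\subseteq\pi^{-N}\mml^i$, which supplies that missing argument.

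For the preimage step, the paper avoids analysis of power series altogether.
\begin{itemize}
\item It first handles the basic series $X^q+\pi X$. There $[\pi^n](X)-z$ is a distinguished polynomial, so algebraic closedness of $\bbc_K$ gives a root, and that root lies in $\mmc$.
\item For a general Lubin--Tate series it conjugates by the isomorphism $\theta\in XO_K\llb X\rrb$ with $[\pi]\circ\theta=\theta\circ(X^q+\pi X)$. It solves $w^q+\pi w=\theta^{-1}(z)$ and sets $x=\theta(w)$, after reducing to $n=1$ by iteration.
\end{itemize}
You instead work with $[\pi^n](X)-z$ directly, via the Newton polygon theorem (equivalently, Weierstrass preparation on closed discs of radius $<1$) for power series over the complete algebraically closed field $\bbc_K$. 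This is standard and correctly applied:
\begin{itemize}
\item every coefficient other than that of $X^{q^n}$ lies in $\mmk$ or equals $-z\in\mmc$, while the $X^{q^n}$-coefficient is a unit;
\item so the polygon descends from $(0,v(z))$ to $(q^n,0)$;
\item it thus yields $q^n$ roots, counted with multiplicity, of positive valuation, where the series converges.
\end{itemize}
Your route trades the Lubin--Tate isomorphism $\theta$ for an analytic theorem, and gives the extra information that $z$ has $q^n$ preimages in $\mmc$. The paper's route stays purely algebraic.

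Two small points. Your first attempt, Weierstrass preparation over the field generated by $z$, does fail when $z$ is transcendental over $K$, so lead with the Newton polygon argument. Also, in the usual convention of plotting $(k,v(b_k))$, the relevant slopes are negative (roots have valuation equal to minus the slope), not positive.
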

\begin{proof}
We will first prove the lemma for the basic Lubin-Tate series $\px=X^q+\pi X$ and then we will show that the general case can be reduced to this via an isomorphism of formal group laws. Pick an element $z \in \bbc_K$. We want to show there is a $x \in \mm_{\bbc_K}$ such that $\lpi(x)=z$. Using an approach analogous to how one shows the $p$-adic log surjects from $\mmc$ onto $\bbc_K$, pick $n\gg 0$ such that $\pi^n z \in \dpi$. Since $\lpi$ is surjective $\mathcal{F}(\dpi) \longrightarrow \dpi$, there is a $y \in \dpi$\footnote{This is not a typo: as sets, $\mathcal{F}(\dpi)=\dpi$. We will often drop the `$\mathcal{F}$' notation when making a statement about which power of the maximal ideal an element lives in.} such that
\begin{equation}\label{zy1}
    \pi^nz=\lpi(y).
\end{equation}
We claim there is an $x \in \mm_{\bbc_K}$ such that $y=\pn(x)$. Granting this claim, we can substitute $\pn(x)$ in place of $y$ in (\ref{zy1}), use $O_K$-linearity to \say{pull out} the $\pi^n$ and divide through to get $z=\lpi(x)$. 

To prove the claim, we note that $\px=X^q+\pi X$ is a distinguished polynomial\footnote{A polynomial $a_nX^n+\cdots+ a_1X+a_0$ is distinguished at a prime ideal $\mathfrak{p}$ if $a_n=1$ and $a_i \in \mathfrak{p}$ for all $0\leq i \leq n-1$. It is \say{almost Eisenstein}. } and in particular any iterate of it (such as $\pn(X)$) will also be distinguished. Consider the polynomial \linebreak $h(X):=\pn(X)-y$. Since $y \in \dpi \subseteq \mm_{\bbc_K}$, the polynomial $h(X)$ is distinguished and thus any root of $h(X)$ will lie in $\mm_{\bbc_K}$. Since $\bbc_K$ is algebraically closed, there is an $x \in \bbc_K$ (and necessarily $x \in \mm_{\bbc_K}$) such that $h(x)=0$, so $\pn(x)=y$, which proves the claim. Substituting this into (\ref{zy1}) we get
$$\pi^nz=\lpi(\pn](x))=\pi^n\lpi(x) \implies z=\lpi(x).$$
To prove the lemma for a general Lubin-Tate series $\px$, which may not be a polynomial, we can't directly use that $\bbc_K$ is algebraically closed to ensure that an element $y \in \dpi$ can be written as $\pn(x)$ for some $x \in \mm_{\bbc_K}$. Instead, we note that there is a unique power series $\theta(X) \in X O_K\llb X \rrb$ such that
\begin{equation}\label{cold1}
    [\pi](\theta(X))=\theta(X^q+\pi X)
\end{equation}
\cite[Corollary 2.16, p. 34]{milne}. We want to show for any $y \in \dpi$ there is an $x \in \mm_{\bbc_K}$ such that $y=\pn(x)$. After iterating, it is enough to show this for $n=1$. Fix an element $y \in \dpi$. Since $\bbc_K$ is algebraically closed, there is a $w \in \bbc_K$ such that
\begin{equation}\label{cold2}
    w^q+\pi w=\theta^{-1}(y).
\end{equation}
We claim that $w \in \mm_{\bbc_K}$, which follows from the fact that $\theta(X)$ is distance preserving on the maximal ideal; write $\displaystyle{\theta(X)=X+\sum_{i\geq 2}a_iX^i}$ for some $a_i \in O_K$. Then for any $r,s \in \mm_{\bbc_K}$ we have
\begin{equation}\label{cold3}
\theta(r)-\theta(s) = r-s+\sum_{i\geq 2}a_i(r-s)^i =(r-s)\lb 1+\sum_{i\geq 2}a_i\lb\sum_{j=0}^{n-1}r^is^{n-i-1} \rb \rb \in (r-s)O_K^\times, 
\end{equation}
and hence $v_K(\theta(r)-\theta(s))=v_K(r-s).$ Set $x:=\theta(w)$. Then $x \in \mm_{\bbc_K}$ by (\ref{cold3}) since $v_K(w)>0$ and moreover, we have $[\pi](x)=[\pi](\theta(w))=\theta(w^q+\pi w)=\theta(\theta^{-1}(y))=y$, by (\ref{cold1}) and (\ref{cold2}).
\end{proof}
 \begin{lemma}\label{genlemgen}
Let $K$ be a nonarchimedean local field of characteristic zero and $\px$ be a Lubin-Tate series for a chosen prime $\pi \in O_K$. For $x \in \mmc$ and $n \geq 1$ we have $\pn(x) \equiv x^{q^n}\bmod{\dpi}.$ 
\end{lemma}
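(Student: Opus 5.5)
We argue by induction on $n$, starting with the case $n=1$. Throughout, note that $\dpi=\{x\in\bbc_K \mid v_K(x)>1/(q-1)\}$ is an additive subgroup of $\bbc_K$ and is stable under multiplication by elements of $O_{\bbc_K}$. So congruence modulo $\dpi$ is an equivalence relation compatible with sums and with multiplication by integral elements.

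For $n=1$, write $\px=X^q+\pi g(X)$ with $g(X)\in O_K\llb X\rrb$; this is possible by (\ref{ltseries}), since $\px\equiv X^q \bmod \mmk$. We also know $\px\equiv \pi X \bmod X^2$, so $g(X)\in XO_K\llb X\rrb$. For $x\in\mmc$ this gives $g(x)\in\mmc$, hence
$$v_K(\px-x^q)=v_K(\pi g(x))=1+v_K(g(x))>1\geq \tfrac{1}{q-1}.$$
Thus $\px\equiv x^q\bmod{\dpi}$.

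For the inductive step, assume $\pn(x)\equiv x^{q^n}\bmod{\dpi}$ for every $x\in\mmc$. Set $y:=[\pi^{n}](x)\in\mmc$. By the case $n=1$ applied to $y$,
$$[\pi^{n+1}](x)=\px\circ\pn(x)=[\pi](y)\equiv y^q \bmod{\dpi}.$$
It then suffices to show that $y\equiv x^{q^n}\bmod{\dpi}$ implies $y^q\equiv x^{q^{n+1}}\bmod{\dpi}$. This is the only step requiring care, since raising to the $q$-th power must preserve congruence modulo $\dpi$.

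To check it, write $y=x^{q^n}+d$ with $d\in\dpi$. Then
$$y^q-(x^{q^n})^q=\sum_{k=1}^{q}\binom{q}{k}(x^{q^n})^{q-k}d^k.$$
Every term contains the factor $d^k$ with $k\geq 1$, multiplied by an element of $O_{\bbc_K}$. Since $v_K(d^k)\geq v_K(d)>1/(q-1)$, each term lies in $\dpi$, and so does their sum. Hence $y^q\equiv x^{q^{n+1}}\bmod{\dpi}$, which completes the induction.

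The binomial coefficients are not needed here: this argument only uses that $\dpi$ is an ideal-like subset of $O_{\bbc_K}$, so no estimate of the $p$-adic valuation of $\binom{q}{k}$ is required. Hence I expect no real obstacle beyond the observation that $\pi g(x)$ has valuation greater than $1$.
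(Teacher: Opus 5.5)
Your proof is correct, but the induction is organised differently from the paper's. The paper inducts inside $O_K\llb X\rrb$. It proves the power-series congruence $\pn(X)\equiv X^{q^n}+\pi^nX \bmod{\pi X^2}$ by substituting $\px$ into the inductive hypothesis and expanding. Only then does it evaluate at $x\in\mmc$, obtaining $\pn(x)=x^{q^n}+\pi^n x+\pi x^2y$ with $y\in O_{\bbc_K}$, and it observes that the last two terms have valuation greater than $1$.

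You use only the $n=1$ case of that shape and induct pointwise. Your extra input is that $t\mapsto t^q$ respects congruences modulo the ideal $\dpi$ of $O_{\bbc_K}$. This sidesteps expanding $(\px)^{q^n}$ modulo $\pi X^2$, which the paper compresses into a single displayed line. Your argument also works verbatim for any ideal of $O_{\bbc_K}$ containing $\pi\mmc$, so it gives the same strength as the paper's version: an error term of valuation $>1$.

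What the paper's route buys is the explicit identity (\ref{pnxshape}) with its linear term $\pi^n x$. Only its $n=1$ case is used later, in (\ref{valpix}) and Theorems \ref{FV} and \ref{regisolem}. That case is exactly your base step, so nothing downstream is lost.
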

\begin{proof}
We will use the congruence $\pn(X)\equiv X^{q^n}+\pi^n X\bmod{\pi X^2}$ for all $n\geq 1$. To show \linebreak $\pn(X)\equiv X^{q^n}+\pi^n X\bmod{\pi X^2}$ we induct on $n$. The base case follows from (\ref{ltseries}). Now suppose the congruence is true for some $n\geq 1$. Then
\begin{align}
 \label{hi}   [\pi^{n+1}](X)= [\pi^n]\lb\px\rb &\equiv (\px)^{q^n}+\pi^n(\px) \bmod{\pi X^2} \\
  \nonumber  &\equiv \lb \pi X+\cdots+X^q+\cdots\rb^{q^n}+\pi^n\lb \pi X+\cdots+X^q+\cdots\rb \bmod{\pi X^2} \\
 \nonumber   &\equiv X^{q^{n+1}}+\pi^{n+1}X\bmod{\pi X^2}. 
\end{align}
For each $x \in \mmc$ there is a $y \in O_{\bbc_K}$ such that
\begin{equation}\label{pnxshape}
    \pn(x)=x^{q^n}+\pi^nx+\pi x^2 y.
\end{equation}
Since both $v_K(\pi^nx)$ and $ v_K(\pi x^2 y)$ are at least $1$, both $\pi^nx, \pi x^2y \in \dpi$ by definition of $\dpi$.
\end{proof}
As a consequence of (\ref{pnxshape}) with $n=1$, if $L|K$ is a finite extension and $x \in \mml$ then
\begin{gather}\label{valpix}
    v_L([\pi](x))=\begin{cases}
        qv_L(x) \quad \quad \quad \quad \quad \text{ if } v_L(x^q)<v_L(\pi x), \text{ i.e., }v_L(x) <\frac{v_L(\pi)}{q-1}, \\
        v_L(\pi)+v_L(x) \quad  \quad \text{if } v_L(x^q)>v_L(\pi x), \text{ i.e., }v_L(x) >\frac{v_L(\pi)}{q-1}.
    \end{cases}
\end{gather}
The following is a result of Wiles \cite[Lemma 3]{wiles} (see also \cite[Proposition 2.2]{lubin94} and \cite[Lemma 1, p. 212]{lang}).
 \begin{theorem}\label{wiles}
Let $K$ be a nonarchimedean local field of characteristic zero and $\px$ be a Lubin-Tate series for a chosen prime $\pi \in O_K$. For all $x \in \mmc$,
$$\lpi(x)=\lim_{n\to \infty}\frac{[\pi^n](x)}{\pi^n}.$$
\end{theorem}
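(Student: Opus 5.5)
We prove Theorem \ref{wiles} by showing that the sequence $[\pi^n](x)/\pi^n$ converges and that its limit equals $\lpi(x)$. Both facts come from the functional equation $\lpi(\px)=\pi\lpi(X)$ of (\ref{logcharacteristics}) and the fact that $\lpi$ is an isomorphism on the small disc $\dpi$.

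Fix $x\in\mmc$. The first step is to move $x$ into the disc $\dpi$. By Lemma \ref{genlemgen}, $[\pi^n](x)\equiv x^{q^n}\bmod{\dpi}$. Since $v_K(x)>0$, we have $v_K(x^{q^n})=q^nv_K(x)>\frac{1}{q-1}$ for all large $n$. Hence there is an $N$ with $y:=[\pi^N](x)\in\dpi$. The fact that $\dpi$ is closed under $F$-addition is not even needed here: we only use that $\dpi$ is an additive subgroup of $\bbc_K$, and that $[\pi^N](x)$ is a sum of $x^{q^N}$ and an element of $\dpi$.

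Next we use the functional equation. Iterating $\lpi\circ[\pi]=\pi\lpi$ gives $\lpi([\pi^n](x))=\pi^n\lpi(x)$ for every $n$. This is valid as an identity of convergent series on $\mmc$: $\lpi$ converges on all of $\mmc$, and the formal identity specialises at points of $\mmc$ because $[\pi](x)\in\mmc$. So it suffices to show
\[
\lim_{m\to\infty}\frac{\lpi([\pi^m](x))-[\pi^m](x)}{\pi^m}=0 .
\]
For $m\ge N$, write $z_m:=[\pi^m](x)=[\pi^{m-N}](y)\in\dpi$, using that $\dpi$ is stable under $[\pi]$. By (\ref{valpix}), applied over a finite extension containing $x$, or directly from $[\pi](z)=\pi z(1+\cdots)$, once $v_K(z)>\frac1{q-1}$ we get $v_K([\pi](z))=1+v_K(z)$. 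Hence $v_K(z_m)=v_K(y)+(m-N)\to\infty$, growing linearly in $m$.

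The key estimate is that on $\dpi$ the higher terms of the logarithm are small. We need $v_K(\lpi(z)-z)\ge 2v_K(z)-c$ for some constant $c$, or at least an estimate of the form $v_K(\lpi(z)-z)\ge v_K(z)+\delta(v_K(z))$ with $\delta\to\infty$. To get this, write $\lpi(X)=\sum_{k\ge1}b_kX^k$. The standard bound on the coefficients is $v_K(b_k)\ge -\lfloor\log_q k\rfloor$: by the construction in \cite{lang}, the coefficients have denominators at worst $\pi^{\lfloor\log_q k\rfloor}$. This gives, for $k\ge2$,
\[
v_K(b_kz^k)\ge k\,v_K(z)-\log_q k .
\]
Then $v_K\bigl((\lpi(z_m)-z_m)/\pi^m\bigr)\ge \min_{k\ge2}\bigl(k v_K(z_m)-\log_q k\bigr)-m$. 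Since $v_K(z_m)\approx m$, the minimum is about $2m-O(1)$, so this quantity tends to infinity.

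The main obstacle is justifying the coefficient bound on $\lpi$ for a general Lubin-Tate series. I would try to extract it from the isomorphism statement $\lpi:\mathcal{F}(\dpi)\to\dpi$ cited from \cite{lang}, since that proof uses exactly this bound. Alternatively, reduce to the polynomial $X^q+\pi X$ via the isomorphism $\theta$ of (\ref{cold1}), for which the recursion $\lpi(X)=\lim\pi^{-n}[\pi^n](X)$ is coefficientwise explicit.
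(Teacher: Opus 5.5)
The paper gives no proof of this theorem; it is quoted from Wiles, with references to Lubin and Lang. Your argument therefore has to stand on its own. Its architecture is the standard one, and the reductions are correct. For $x\neq 0$ you get $y=[\pi^N](x)\in\dpi$; if $y=0$, both sides are $0$. Formula (\ref{pnxshape}) with $n=1$ gives $v_K(z_m)=v_K(y)+m-N$ directly in $\bbc_K$. A point of $\mmc$ need not lie in any finite extension, so (\ref{valpix}) does not literally apply, but nothing is lost. Finally, $\lpi(x)-z_m/\pi^m=(\lpi(z_m)-z_m)/\pi^m$.

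So everything reduces to showing $v_K(\lpi(z)-z)-v_K(z)\to\infty$ as $v_K(z)\to\infty$. That is where all the content lies, and you have not proved it. The bound $v_K(b_k)\ge-\lfloor\log_q k\rfloor$ is asserted with only an appeal to ``the construction in Lang'', and your fallback does not work as stated. The logarithm of $X^q+\pi X$ is no more explicit than that of a general $\px$. Moreover, ``$\lpi(X)=\lim\pi^{-n}[\pi^n](X)$ coefficientwise'' is a formal version of the very statement being proved, so invoking it is circular. The same unproved coefficient control also sits behind your claim that $\lpi$ converges on all of $\mmc$. For evaluating $\lpi\circ\px=\pi\lpi$ at points, you could instead use the cited fact that $\lpi$ is an $O_K$-module homomorphism on $\mathcal{F}(\mmc)$.

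The gap is easy to close, and you do not need your sharp bound. Any estimate $v_K(b_k)\ge -C\log k$ suffices, because $(k-1)v-C\log k\ge v-C\log 2$ for all $k\ge 2$ once $v>C\log 2$. The quickest source is the invariant differential of the formal group law $F$ attached to $\px$. Differentiate $\lpi(F(X,Y))=\lpi(X)+\lpi(Y)$ in $X$ and set $X=0$; this gives $\lpi'(Y)=\bigl(\partial_X F(0,Y)\bigr)^{-1}\in O_K\llb Y\rrb$. Hence $kb_k\in O_K$, so $v_K(b_k)\ge -v_K(k)\ge -v_K(p)\log_p k$. This also gives convergence of $\lpi$ on $\mmc$ and justifies evaluating the formal identity at points.

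Alternatively, you can use the cited isometry of $\lpi$ on $\dpi$. Comparing the Gauss norm and the sup norm of $t\mapsto\lpi(tz)$ on $|t|\le 1$ gives $|b_kz^k|\le|z|$, hence $v_K(b_k)\ge-\frac{k-1}{q-1}$. It follows that $v_K(\lpi(z)-z)\ge 2v_K(z)-\frac{1}{q-1}$ on $\dpi$, which is exactly the estimate you wanted.

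Your bound $v_K(b_k)\ge-\lfloor\log_q k\rfloor$ is in fact true. One proof uses a strict $O_K$-isomorphism to the Lubin--Tate group with logarithm $\sum_{n\ge0}X^{q^n}/\pi^n$, but that is more machinery than the argument needs.
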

Equipped with Theorem \ref{wiles} we can prove that the torsion of the Lubin-Tate logarithm $\lpi(X)$ in $\mmc$ is precisely the set of all $[\pi]$-torsion in $\mmc$, analogous to the kernel of the usual $p$-adic logarithm $\log(1+X)$ being the set of all $p$-th power roots of unity.
\begin{lemma}\label{kernel} 
 Let $K$ be a nonarchimedean local field of characteristic zero and $\px$ be a Lubin-Tate series for a chosen prime $\pi \in O_K$. The kernel of $\lpi(X)$ in $\mmc$ is $\mup$.    
\end{lemma}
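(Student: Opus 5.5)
We prove the two inclusions separately; both rest on the functional equation $\lpi(\px)=\pi\lpi(X)$, which iterates to $\lpi(\pn(x))=\pi^n\lpi(x)$, together with the fact that $\lpi$ is injective on $\dpi$.

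For $\mup\subseteq\ker\lpi$, take $\lambda\in\mupn$. Then $\pn(\lambda)=0$, so the iterated equation gives
$$\pi^n\lpi(\lambda)=\lpi(\pn(\lambda))=\lpi(0)=0.$$
Since $\bbc_K$ is a field of characteristic zero, this forces $\lpi(\lambda)=0$.

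For $\ker\lpi\subseteq\mup$, suppose $x\in\mmc$ satisfies $\lpi(x)=0$. The goal is to find some $n$ with $\pn(x)\in\dpi$. Granting this, $\lpi(\pn(x))=\pi^n\lpi(x)=0=\lpi(0)$, and $0\in\dpi$ as well. Injectivity of $\lpi$ on $\mathcal{F}(\dpi)$ (it is an isomorphism $\mathcal{F}(\dpi)\to\dpi$) then gives $\pn(x)=0$, so $x\in\mupn\subseteq\mup$.

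It remains to show that iterating $[\pi]$ eventually lands in $\dpi$. One option is Lemma \ref{genlemgen}: it gives $\pn(x)\equiv x^{q^n}\bmod{\dpi}$. Since $v_K(x)>0$, we have $v_K(x^{q^n})=q^nv_K(x)>1/(q-1)$ for $n$ large, and because $\dpi$ is an additive group, $\pn(x)\in\dpi$. A second option is the explicit form (\ref{pnxshape}), $\pn(x)=x^{q^n}+\pi^nx+\pi x^2y$. Each term has valuation at least $\min(q^nv_K(x),\,1)$, and for large $n$ this exceeds $1/(q-1)$, except possibly when $q=2$ and the bound $1$ is not strictly larger than $1/(q-1)=1$. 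For that reason Lemma \ref{genlemgen} is the cleaner route, since it already packages the needed membership in $\dpi$.

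An alternative second half would go through Theorem \ref{wiles}, but it is not needed: injectivity on $\dpi$ suffices. The main obstacle is therefore just the point above, namely ensuring that some iterate $\pn(x)$ lies in $\dpi$, and Lemma \ref{genlemgen} settles it.
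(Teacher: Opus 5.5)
Your proof is correct. The inclusion $\ker\lpi\subseteq\mup$ is exactly the paper's argument: use Lemma~\ref{genlemgen} to push some iterate $\pn(x)$ into $\dpi$, note that $\lpi(\pn(x))=\pi^n\lpi(x)=0$, and conclude by injectivity of $\lpi$ on $\mathcal{F}(\dpi)$.

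For the easy inclusion $\mup\subseteq\ker\lpi$ you take a different route. The paper invokes Wiles's formula (Theorem~\ref{wiles}) and observes that $\pn(\lambda)/\pi^n$ is eventually $0$. You use only the identity $\pi^n\lpi(\lambda)=\lpi(\pn(\lambda))$. Pointwise, that identity is the $O_K$-linearity of $\lpi$ on $\mathcal{F}(\mmc)$, which the paper already cites in its background. Your route is the more economical one. The paper needs the same identity in the reverse direction anyway, so your version proves the lemma from one tool plus injectivity on $\dpi$. It removes any dependence on Theorem~\ref{wiles}; the paper's choice mainly serves to showcase the limit formula it has just stated.

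One small correction to your side remark: there is no $q=2$ problem with the explicit form (\ref{pnxshape}). Since $v_K(x)>0$, one has $v_K(\pi^nx)=n+v_K(x)>1$ and $v_K(\pi x^2y)\geq 1+2v_K(x)>1$. So both terms lie in $\dpi$ for every $q$, because $1\geq 1/(q-1)$. This strict inequality is exactly what makes Lemma~\ref{genlemgen} true, so the two options you list are really the same argument.
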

\begin{proof}
One containment is easy; if $\lambda \in \mup$ then there is a positive integer $N\geq 1$ such \linebreak that $\pn(\lambda)=0$ for all $n\geq N$, and hence $\displaystyle{\lim_{n\to\infty}\frac{\pn(\lambda)}{\pi^n}=0}$. 
For the reverse containment, \linebreak suppose $\lambda \in \mmp$ and $\lpi(\lambda)=0$. Multiplying through by any power of $\pi$ tells us that \linebreak
$\lpi(\pn(\lambda))=0$ for all $ n \geq 1$.
By Lemma \ref{genlemgen}, we can choose a positive integer $n\geq 1$ sufficiently large so that $\pn(\lambda) \in \dpi$. It now follows from injectivity of $\lpi$ on $\dpi$ that $\pn(\lambda)=0$ and\linebreak hence $\lambda \in \mupn \subset \mup$.
\end{proof}
We conclude with two theorems. The first is an analogue of the decomposition of principal units 
$$1+\mml \cong \mu_{p^\infty}(L) \times \zp^{[L:K]}$$
for a finite extension $L|\qp$, where $\mu_{p^\infty}(L)$ is the set of $p$-power roots of unity in $L$.
\begin{theorem}\label{decomplemma}
Let $L|K$ be a finite extension. There is an isomorphism of $O_K$-modules
$$\ltml \cong \mup(L) \times O_K^{[L:K]}.$$
\end{theorem}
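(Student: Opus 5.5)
The approach is the structure theorem for finitely generated modules over the discrete valuation ring $O_K$, a PID. Three facts are needed: $\ltml$ is finitely generated over $O_K$, its torsion submodule is exactly $\mup(L)$, and its free rank is $[L:K]$.

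\emph{Finite generation.} Fix an integer $i>\frac{v_L(\pi)}{q-1}$. The logarithm $\lpi:\ltmli\to\mml^i$ is an isomorphism of $O_K$-modules, since $\mml^i\subseteq \dpil$ and $\lpi$ is an isomorphism on $\dpil$; one also has to check that $\lpi$ preserves $\mml^i$ for such $i$. The target $\mml^i=\varpi^iO_L$ is a free $O_K$-module of rank $[L:K]$, because $O_L$ is free of rank $[L:K]$ over $O_K$. Hence $\ltmli\cong O_K^{[L:K]}$. The quotient $\ltml/\ltmli$ has a finite filtration whose graded pieces $\ltml^j/\ltml^{j+1}$, for $1\le j<i$, are isomorphic to $k_L$ by (\ref{isotores}). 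So the quotient is finite, and $\ltml$ is finitely generated, being an extension of a finite module by a finitely generated one.

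\emph{Rank.} Since $\ltmli$ has finite index in $\ltml$, tensoring with $K$ gives
$$\ltml\otimes_{O_K}K\cong \ltmli\otimes_{O_K}K\cong K^{[L:K]},$$
so the free rank is $[L:K]$.

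\emph{Torsion.} An element $x\in\ltml$ is $O_K$-torsion if and only if $[\pi^n](x)=0$ for some $n$. This holds because every nonzero $a\in O_K$ is $u\pi^n$ with $u\in O_K^\times$, and $[u]$ is an automorphism. Such elements are exactly $\mup\cap\mml=\mup(L)$, using that torsion points lie in the maximal ideal. Alternatively, Lemma \ref{kernel} identifies this set with $\ker\lpi\cap L$.

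The structure theorem now gives $\ltml\cong T\oplus O_K^r$ with $T=\mup(L)$ and $r=[L:K]$.

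\emph{Main obstacle.} The one point that needs care is that the logarithm sends $\mml^i$ onto $\mml^i$, not just into $\dpil$. I would deduce this from the isometry property of $\lpi$ on $\dpi$, namely $v(\lpi(x))=v(x)$ there, which follows from Lang's lemma. If that is not available, I would avoid the logarithm entirely for this step. For $i$ large, $[\pi]$ maps $\ltmli$ isomorphically onto $\mathcal{F}(\mml^{i+v_L(\pi)})$ by (\ref{valpix}) and a successive approximation. One can then show $\ltmli$ is torsion-free and finitely generated by a Nakayama-type argument, since $\ltmli/[\pi]\ltmli$ is finite. Freeness of rank $[L:K]$ then follows by counting: the index of $[\pi]\ltmli$ in $\ltmli$ is $q^{[L:K]}$, because it equals $|k_L|^{v_L(\pi)}=q^{f e}$.
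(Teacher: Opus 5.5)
Your proposal is correct and follows essentially the same route as the paper. For $i>\frac{v_L(\pi)}{q-1}$ you identify $\ltmli$ with $\mml^i\cong O_K^{[L:K]}$ via $\lpi$, use the filtration with graded pieces $k_L$ to get finite index and hence finite generation, tensor with $K$ to read off the rank, and apply the structure theorem with torsion submodule $\mup(L)$. Your remark that the isometry of $\lpi$ on $\dpil$ is what gives $\lpi(\mml^i)=\mml^i$ justifies a step the paper only asserts.
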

\begin{proof}
For $i> \displaystyle{\frac{v_L(\pi)}{q-1}}$, $\lpi: \ltmli \longrightarrow \mml^i$ is an isomorphism of $O_K$-modules. 
As additive $O_K$-modules, 
$$\mml^i \cong O_L \cong O_K^{[L:K]}. $$
Hence for $i>\displaystyle{\frac{v_L(\pi)}{q-1}}$ (i.e., $\mml^i \subseteq \dpil$), $\lpi(X)$ induces an isomorphism of $O_K$-modules 
\begin{equation}\label{freeiso}
    \ltmli \stackrel{\cong}{\longrightarrow} O_K^{[L:K]}.
\end{equation}
Since $\ltmli/\ltmlii \cong k_L$ for all $i\geq 1$ by (\ref{isotores}), $[\ltml:\ltmli]=q^{f(i-1)}$, where $f$ is the residue field degree. In particular $\ltmli$ has finite index in $\ltml$ and it follows that $\ltml$ is a finitely generated $O_K$-module with the same rank $[L:K]$. To see why, note that $\ltml/\ltmli$ is a torsion $O_K$-module. In particular, 
$$\ltml/\ltmli \otimes_{O_K} K=0,$$
since $K$ is an injective $O_K$-module. Since $K$ is a flat $O_K$-module, tensoring the short exact sequence 
\begin{equation}\label{ses}
    0 \longrightarrow \ltmli \longrightarrow \ltml \longrightarrow \ltml/\ltmli \longrightarrow 0
\end{equation}
by $K$ shows that $$\ltmli \otimes_{O_K} K \cong \ltml \otimes_{O_K} K.$$ Since the rank of $\ltml$ as an $O_K$-module is exactly the dimension of $\ltml \otimes_{O_K} K$ as a $K$-vector space, this proves that the rank of $\ltml$ is equal to the rank of $\ltmli$, which is $[L:K]$ by (\ref{freeiso}). The fact that $\ltml$ is finitely generated follows from (\ref{ses}) since both $\ltmli$ and $\ltml/\ltmli$ are finitely generated.
Since $\ltml$ is a finitely generated $O_K$-module with rank $[L:K]$ it follows by the structure theorem for finitely generated modules over a PID that
$\ltml \cong T \times O_K^{[L:K]}$
for some finite torsion submodule $T$. The torsion in $\ltml$ is exactly the set of all $[\pi]$-power torsion in $L$, namely $\mup(L)$, which is finite, hence the result follows.
\end{proof}
\begin{theorem}\label{freelem} For any finite extension $L|K$, $\lpi(\ltml)$ is a free $O_K$-module of rank $[L:K]$.
\end{theorem}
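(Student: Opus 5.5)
The plan is to combine Theorem \ref{decomplemma} with Lemma \ref{kernel}. By Theorem \ref{decomplemma} there is an isomorphism of $O_K$-modules $\ltml \cong \mup(L) \times O_K^{[L:K]}$. Since $\lpi$ is an $O_K$-module homomorphism on $\ltml$, its image is isomorphic to $\ltml/\ker(\lpi|_{\ltml})$. By Lemma \ref{kernel} the kernel of $\lpi$ on $\mmc$ is $\mup$, so the kernel on $\ltml$ is $\mup\cap L=\mup(L)$. This is exactly the torsion submodule of $\ltml$, i.e.\ the first factor in the decomposition. Hence
$$\lpi(\ltml)\cong \ltml/\mup(L)\cong O_K^{[L:K]},$$
which is free of rank $[L:K]$.

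The key steps, in order, are as follows. First, identify the kernel of the restriction as $\mup(L)$, using Lemma \ref{kernel}. Second, note that under the isomorphism of Theorem \ref{decomplemma} the torsion submodule of $\ltml$ corresponds to the torsion factor $\mup(L)\times\{0\}$. This holds because any isomorphism carries torsion to torsion, and $O_K^{[L:K]}$ is torsion-free. The quotient by the torsion is therefore isomorphic to $O_K^{[L:K]}$. Third, apply the first isomorphism theorem.

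As an independent check, one can argue without the decomposition. The image $\lpi(\ltml)$ is a finitely generated $O_K$-submodule of $L$, hence torsion-free, hence free over the PID $O_K$. It contains $\lpi(\ltmli)=\mml^i$ for large $i$, which has rank $[L:K]$. It sits inside the $K$-vector space $L$ of dimension $[L:K]$, so its rank is exactly $[L:K]$.

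There is no real obstacle. The only point needing a little care is the second step: one must justify that the kernel of $\lpi$ coincides with the whole torsion submodule, so that the quotient is torsion-free. It is not enough for the kernel merely to be contained in the torsion. Lemma \ref{kernel} gives this equality, because every torsion element of $\ltml$ is killed by some $[\pi^n]$ and so lies in $\mup(L)$.
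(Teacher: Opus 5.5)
Your argument is correct. Your main route differs from the paper's, while your ``independent check'' is essentially the paper's proof.

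The paper never identifies the kernel. It argues, by the finite-index reasoning of Theorem \ref{decomplemma} applied to $\lpi(\ltmli)=\mml^i$, that $\lpi(\ltml)$ is finitely generated of rank $[L:K]$. It then writes the image as $T\times O_K^{[L:K]}$ and kills $T$ because the image lies in $L$ with its ordinary, torsion-free module structure.

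Your main argument instead uses Lemma \ref{kernel} to show that the restriction of $\lpi$ to $\ltml$ has kernel exactly $\mup(L)$. This is the full torsion submodule, since $[u\pi^n]=[u]\circ[\pi^n]$ with $[u]$ invertible for $u\in O_K^\times$. So the image is $\ltml$ modulo its torsion, hence $\cong O_K^{[L:K]}$ by Theorem \ref{decomplemma}.

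What your route gains is a canonical isomorphism $\ltml/\mup(L)\cong\lpi(\ltml)$. This is the general form of the step the paper uses in Theorem \ref{basisthm2}, where $\mup(L)=\{0\}$ makes $\lpi$ itself an isomorphism. The cost is that it relies on the inclusion $\ker\subseteq\mup(L)$ from Lemma \ref{kernel}, which rests on injectivity of $\lpi$ on $\dpil$.

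The paper's route needs only finite generation and a rank count, and torsion-freeness comes for free from $\lpi(\ltml)\subseteq L$. Your sandwich $\mml^i\subseteq\lpi(\ltml)\subseteq L$ gets that rank count more cleanly than rerunning the tensor-with-$K$ argument the paper alludes to.
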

\begin{proof}
Using an analogous argument to the one seen in Theorem \ref{decomplemma}, one can show that $\lpi(\ltml)$ is a finitely generated $O_K$-module of rank $[L:K]$ so by the structure theorem for finitely generated modules over a PID, 
$$\lpi(\ltml) \cong T \times O_K^{[L:K]}$$ for some torsion submodule $T$. Since $\lpi(\ltml) \subseteq L$ has the usual $O_K$-module structure, $\lpi(\ltml)$ is torsion-free, thus $T=0$ and $\lpi(\ltml) \cong O_K^{[L:K]}$.
\end{proof}
\section{$\pi$-Regular extensions}\label{pireg}

In \cite[pp. 231-233]{cohen}, Cohen considers finite extensions $L|\qp$ that do not contain any nontrivial $p$-power root of unity and uses Nakayama's lemma to determine a $\zp$-basis of the principal units $1+\mml$, as follows \cite[Corollary 4.3.19]{cohen}.
\begin{theorem}\label{cohen}
    Let $L|\qp$ be a finite extension such that $L$ does not contain a non-trivial $p$-th root of unity, and denote by $e$ and $f$ the ramification index and residue field degree of $L|\qp$. Let $\varpi$ be a uniformiser of $L$ and let $\zeta_1,\ldots,\zeta_f$ in $O_L$ be a lift of an $\fp$-basis of the residue field of $L$. The $ef$ elements
$$1+\zeta_i\pi^j \text{ with } 1\leq i \leq f, \ 1\leq j \leq \frac{pe}{p-1} \text{ and } p\nmid j$$
constitute a $\zp$-basis of the multiplicative $\zp$-module $1+\mml$. 
\end{theorem}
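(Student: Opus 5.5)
The plan is to prove that the given $ef$ elements generate $1+\mml$ and then count ranks. Since $L$ contains no nontrivial $p$-th root of unity, the multiplicative analogue of Theorem \ref{decomplemma} (the decomposition $1+\mml\cong \mu_{p^\infty}(L)\times\zp^{[L:\qp]}$ recalled above) shows that $1+\mml$ is a free $\zp$-module of rank $ef=[L:\qp]$. Any $ef$ generators of a free module of rank $ef$ over the PID $\zp$ are a basis: the surjection $\zp^{ef}\to 1+\mml$ they define has free kernel $N$, and tensoring with $\qp$ gives a surjection of $\qp$-vector spaces of equal dimension, so $N\otimes\qp=0$ and hence $N=0$. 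It therefore suffices to prove generation. (Here I read $\pi^j$ in the statement as $\varpi^j$.)

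For generation I use Nakayama's lemma for the compact $\zp$-module $U=1+\mml$. That is, I show the images of the $1+\zeta_i\varpi^j$ span the $\fp$-vector space $U/U^p$. Since $U$ is finitely generated over $\zp$ (it has finite-index free submodules $1+\mml^i$, for $i$ large, via the logarithm), topological Nakayama applies.

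The main step is to describe $U^p$ filtration-wise using $U_i=1+\mml^i$, whose graded pieces are $U_i/U_{i+1}\cong k_L$. For $u=1+x\varpi^i$ the binomial expansion gives
\[
u^p \equiv 1+x^p\varpi^{pi} \pmod{\mml^{pi+1}} \quad\text{if } i<\tfrac{e}{p-1},
\qquad
u^p\equiv 1+p x\varpi^{i}\pmod{\mml^{i+e+1}}\quad\text{if } i>\tfrac{e}{p-1}.
\]
In the boundary case $i=e/p-1$ both terms contribute, giving the map $x\mapsto x^p+\bar c x$ on $k_L$. Its kernel is nontrivial exactly when $L$ contains $\zeta_p$, so under our hypothesis it is bijective.

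From this I would show that every $u\in U$ can be written as a product of the $1+\zeta_i\varpi^j$ (with $p\nmid j$, $j\le pe/(p-1)$) raised to exponents in $\{0,\dots,p-1\}$, times an element of $U^p$. This is a successive approximation on the level $j$. At a level $j$ with $p\nmid j$ and $j\le pe/(p-1)$, I adjust the leading coefficient using the $\zeta_i$, which span $k_L$ over $\fp$. At a level $j=pi$ with $i<e/(p-1)$, the leading term is a $p$-th power $u_i^p$, because $k_L$ is perfect. At a level $j>pe/(p-1)$, write $j=i+e$ with $i>e/(p-1)$; the leading term again comes from a $p$-th power, using the bijectivity at the boundary level $j=pe/(p-1)$ when that is an integer. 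Convergence follows from completeness, and $U^p$ is closed because it is compact. Hence the images of the listed elements span $U/U^p$, and Nakayama gives generation.

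I expect the main obstacle to be the boundary level $j=pe/(p-1)$, which is exactly where the hypothesis on $p$-th roots of unity enters. The argument has to check that $x\mapsto x^p+\bar c x$ being surjective is equivalent to $\zeta_p\notin L$. The counting is a consistency check: the number of $j\le pe/(p-1)$ with $p\nmid j$ equals $e$ (when $(p-1)\mid e$ the endpoint $pe/(p-1)$ is excluded since it is divisible by $p$), which matches the rank $ef$.
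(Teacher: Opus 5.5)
Your proposal is correct and is essentially the paper's argument: the proof of Theorem \ref{basisthm1}, specialised to $[p](X)=(1+X)^p-1$ and following Cohen, runs on Nakayama on $U/U^p$, the $p$-th power map on the graded pieces to discard the levels $pi$ with $i<e/(p-1)$, the boundary level $pe/(p-1)$ (where $\zeta_p\notin L$ enters, as in Theorem \ref{regisolem}) and all levels above it, and finally the count $ef$ together with freeness of $1+\mml$. One precision: surjectivity of $\bar x\mapsto \bar x^p+\bar c\bar x$ is needed exactly at the level $j=pe/(p-1)$, which falls in neither of your strict-inequality cases, whereas levels $j>pe/(p-1)$ need no hypothesis because $u^p\equiv 1+px\varpi^{i}$ already covers them.
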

In this section we will follow a similar route to determine an $O_K$-basis for the $O_K$-module $\ltml$.
\begin{definition}
    Call a finite extension $L|K$ \textit{$\pi$-regular} if $L$ contains no nontrivial $[\pi]$-torsion: $\mup(L)=\{0\}.$
\end{definition}

\begin{lemma}\label{equivdefofreg}
    Let $L|K$ be a finite extension and $\varpi \in O_L$ be a prime in $L$. The following are equivalent statements.
    \begin{itemize}
        \item[(i)] $L|K$ is $\pi$-regular, i.e. $\mup(L)=\{0\}$,
        \item[(ii)] Either $\displaystyle{\frac{v_L(\pi)}{q-1} \not\in \z}$, or $\displaystyle{\frac{v_L(\pi)}{q-1} \in \z}$ and there is no $u \in O_L^\times$ satisfying
        \begin{equation}\label{reg1}
            u^q+\varepsilon u\equiv 0\bmod{\mml}, 
        \end{equation}
        where $\varepsilon \in O_L^\times$ comes from the decomposition $\pi=\varepsilon\varpi^{v_L(\pi)}$.
    \end{itemize}
\end{lemma}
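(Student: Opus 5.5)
The plan is to reduce both conditions to a statement about nonzero roots of $\px$ in $\mml$, and then read off those roots from the leading terms of $\px$ via (\ref{pnxshape}) and (\ref{valpix}).

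First observe that $\mup(L)\neq\{0\}$ if and only if $\mu_{[\pi]}(L)\neq\{0\}$. Indeed, if $\lambda\in\mup(L)$ is nonzero with $[\pi^n](\lambda)=0$ and $n$ minimal, then $[\pi^{n-1}](\lambda)$ is a nonzero element of $\mu_{[\pi]}$ lying in $L$. This uses that $[\pi^{n-1}]$ has $O_K$-coefficients and so maps $\mml$ into $\mml$. Hence (i) is equivalent to: there is no $x\in\mml$, $x\neq 0$, with $[\pi](x)=0$.

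Next, suppose $x\in\mml$ is nonzero with $[\pi](x)=0$. By (\ref{valpix}), if $v_L(x)\neq \frac{v_L(\pi)}{q-1}$ then $v_L([\pi](x))$ is finite, namely $qv_L(x)$ or $v_L(\pi)+v_L(x)$, which is a contradiction. So necessarily $v_L(x)=\frac{v_L(\pi)}{q-1}$, and in particular this quantity is an integer $m$.

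Write $x=u\varpi^m$ with $u\in O_L^\times$ and $\pi=\varepsilon\varpi^{v_L(\pi)}=\varepsilon\varpi^{(q-1)m}$. By (\ref{pnxshape}) with $n=1$,
$$[\pi](x)=u^q\varpi^{qm}+\varepsilon u\varpi^{qm}+\pi x^2y .$$
Here $v_L(\pi x^2 y)\geq v_L(\pi)+2m>qm$. Dividing by $\varpi^{qm}$ gives $0\equiv u^q+\varepsilon u\bmod{\mml}$, so (\ref{reg1}) has a solution. This proves (ii)$\Rightarrow$(i) by contraposition.

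For (i)$\Rightarrow$(ii), again by contraposition, assume $m=\frac{v_L(\pi)}{q-1}\in\z$ and that $u_0\in O_L^\times$ satisfies (\ref{reg1}). We must produce a nonzero root of $[\pi]$ in $\mml$, and this is the main obstacle: it requires a Hensel-type lifting. Substituting $X=\varpi^m Z$ gives
$$[\pi](\varpi^mZ)=\varpi^{qm}g(Z),$$
where $g(Z)\in O_L\llb Z\rrb$ satisfies $g(Z)\equiv Z^q+\varepsilon Z\bmod{\mml}$. To see this, note that the terms $\pi X^2(\cdots)$ contribute valuation at least $v_L(\pi)+2m-qm=m>0$ after dividing by $\varpi^{qm}$. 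Moreover, since $\pi\in\mml$, the coefficient of $Z^q$ in $g$ is a unit, so $g$ converges on $O_L$.

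The reduction $\bar g(Z)=Z^q+\bar\varepsilon Z$ has $\bar u_0$ as a simple root, because $\bar g'(\bar u_0)=\bar\varepsilon\neq0$ (as $q\equiv0$ in $k_L$). Hensel's lemma for power series over the complete ring $O_L$, applied via Newton iteration, then gives $u\in O_L$ with $u\equiv u_0\pmod{\mml}$ and $g(u)=0$. Then $x=u\varpi^m$ is a nonzero element of $\mml$ with $[\pi](x)=0$, so $L$ is not $\pi$-regular.

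If one prefers to avoid Hensel's lemma for power series, an alternative is to use Weierstrass preparation: the element $\theta$ of Lemma \ref{surj} conjugates $[\pi]$ to the polynomial $X^q+\pi X$. That $\theta$ preserves valuations (see (\ref{cold3})) and has $O_K$-coefficients, so the congruence condition transfers up to replacing $\varepsilon$ by the same $\varepsilon$. The polynomial version of Hensel's lemma then finishes the argument.
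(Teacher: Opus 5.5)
Your proof is correct, but it takes a somewhat more self-contained route than the paper. The final substitution $x=u\varpi^m$ into (\ref{pnxshape}) is the same in both; the differences are in how each direction is reached.

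For (ii)$\Rightarrow$(i), the paper relies on Lubin--Tate theory. If $v_L(\pi)/(q-1)\notin\z$, it uses that $K_\pi|K$ is totally ramified of degree $q-1$. In the integral case, it uses that a generator $\lambda$ of $\mu_{[\pi]}$ is a uniformiser of $K_\pi$, so $v_L(\lambda)=v_L(\pi)/(q-1)$. It also tacitly identifies ``$K_\pi\not\subseteq L$'' with $\mup(L)=\{0\}$. You instead reduce explicitly from $\mup(L)$ to $\mu_{[\pi]}(L)$. You then get $v_L(x)=v_L(\pi)/(q-1)$ for every nonzero $[\pi]$-torsion point directly from (\ref{valpix}), which handles both cases at once and needs no input about $K_\pi$.

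For (i)$\Rightarrow$(ii), the paper applies Hensel only to the polynomial $X^{q-1}+\varepsilon$. This gives an exact nonzero root $\xi\varpi^m$ of $X^q+\pi X$, which it transports to a root of $\px$ via the formal group isomorphism $\theta$ of (\ref{cold1}). That is precisely your fallback. Your main argument instead applies Hensel/Newton directly to $g(Z)=\varpi^{-qm}[\pi](\varpi^mZ)$. This avoids needing the existence of $\theta$, at the price of using Hensel's lemma for power series over $O_L$ rather than for polynomials.

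Two small corrections.
\begin{itemize}
\item The reason you give for $g$ converging on $O_L$ (the unit coefficient of $Z^q$) is not the relevant one. What you need, and what holds, is that the coefficients of $g$ tend to $0$: the $Z^k$-coefficient of $\varepsilon\varpi^mZ^2h(\varpi^mZ)$ has valuation at least $m(k-1)$. Equivalently, $\varpi^mZ\in\mml$ for $Z\in O_L$. This restricted-power-series property is exactly what keeps the Newton iteration inside $O_L$.
\item Your fallback is not ``Weierstrass preparation''; it is conjugation by $\theta$. Weierstrass preparation in $O_L\langle Z\rangle$ would, however, give yet another valid reduction to a polynomial.
\end{itemize}
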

\begin{proof}
For the implication (i)$\implies$(ii), assume that $L|K$ is $\pi$-regular so that $\mup(L)=\{0\}$. If  $\displaystyle{\frac{v_L(\pi)}{q-1} \not\in \z}$ then we're already done, so suppose $\displaystyle{\frac{v_L(\pi)}{q-1} \in \z}$ and assume there is a $u \in O_L^\times$ satisfying the congruence (\ref{reg1}) for $\varepsilon:=\displaystyle{\frac{\pi}{\varpi^{v_L(\pi)}}}$. Rearranging (\ref{reg1}) shows $-\varepsilon$ is a $(q-1)$-th power in the residue field $k_L$. By Hensel's lemma, there is $\xi \in O_L^\times$ such that $-\varepsilon=\xi^{q-1}$, so
\begin{align*}
  -\frac{\pi}{\varpi^{v_L(\pi)}}=\xi^{q-1} &\implies -\pi=\lb \xi\varpi^{v_L(\pi)/(q-1)}\rb^{q-1}\\
    &\implies \lb \xi\varpi^{v_L(\pi)/(q-1)}\rb^{q}+\pi\lb \xi\varpi^{v_L(\pi)/(q-1)}\rb=0.
\end{align*}
In particular, if $\px=X^q+\pi X$ is the basic Lubin-Tate series for $\pi$ then we have found nonzero $\xi\varpi^{v_L(\pi)/(q-1)} \in O_L$ satisfying $[\pi](\xi\varpi^{v_L(\pi)/(q-1)})=0$, contradicting our assumption that $L|K$ is \linebreak $\pi$-regular. In the case that $\px$ is not the basic Lubin-Tate series for $\pi$, conjugation on $\mml$ by an isomorphism of formal group laws defined over $O_K$ \cite[Corollary 2.16, p. 34]{milne} shows that any nonzero solution to $X^q+\pi X=0$ in $\mml$ will give rise to a nonzero solution of $\px=0$. Thus we have proved there can be no $u \in O_L^\times$ satisfying the congruence (\ref{reg1}) when $L$ contains no nontrivial $[\pi]$-torsion.

For (ii)$\implies$(i), if $\displaystyle{\frac{v_L(\pi)}{q-1} \not\in \z}$ then $L|K$ is $\pi$-regular since $K_\pi|K$ is totally ramified of degree $q-1$. If $\displaystyle{\frac{v_L(\pi)}{q-1} \in \z}$ and $K_\pi \subseteq L$, then we will find $u \in O_L^\times$ satisfying (\ref{reg1}). This shows that if there is no $u \in O_L^\times$ satisfying (\ref{reg1}) then $L$ does not contain $K_\pi$, so $L$ is $\pi$-regular.

If $K_\pi \subseteq L$ then we can pick a $\lambda \in \mml$ that generates the cyclic $O_K$-module $\mu_{[\pi]}$. By (\ref{pnxshape}) with $n=1$, any $x \in \mml$ satisfies \begin{equation}\label{reg2}
    [\pi](x) = x^q+\pi x+\pi x^2 y
\end{equation}
for some $y \in O_L$. 
Since $v_L(\lambda)$ is $i:=\displaystyle{\frac{v_L(\pi)}{q-1}} \geq 1$, we can write $\lambda=u\varpi^i$ for some $u \in O_L^\times$. Since $[\pi](\lambda)=0$, substituting $u\varpi^i$ for $\lambda$ in (\ref{reg2}) implies
\begin{align*}
     (u\varpi^i)^q+\pi u\varpi^i+\pi (u\varpi^i)^2 y=0 &\implies u^q+\varepsilon u+\varepsilon u^2\varpi^i y =0 \\
     &\implies u^q+\varepsilon u \equiv 0\bmod{\varpi O_L},
\end{align*}
where the first implication follows by dividing through by $\varpi^{qi}$. This completes the proof.
\end{proof}
\begin{remark}
    We note that (ii) in Lemma \ref{equivdefofreg} is independent of the choice of prime in $O_L$ that defines $\varepsilon$. Suppose $\displaystyle{\frac{v_L(\pi)}{q-1} \in \z}$. Let $\varpi_1,\varpi_2 \in O_L$ be prime and $\varepsilon_1, \varepsilon_2 \in O_L^\times$ such that $$\varepsilon_1\varpi_1^{v_L(\pi)}=\pi=\varepsilon_2\varpi_2^{v_L(\pi)}.$$
    If $u \in O_L^\times$ satisfies 
    $u^q+\varepsilon_1u\equiv 0\bmod{\mml},$ then
    $v:=\displaystyle{u\lb \frac{\varpi_1}{\varpi_2}\rb^{\frac{v_L(\pi)}{q-1}}}$ satisfies
    $v^q+\varepsilon_2v\equiv0\bmod{\mml}.$
\end{remark}

For a finite extension $L|\qp$, the $p$-th power map induces a group homomorphism $$(1+\mml^i)/(1+\mml^{i+1}) \stackrel{\cong}{\longrightarrow} (1+\mml^{pi})/(1+\mml^{pi+1})$$ whenever $1\leq i\leq\displaystyle{\frac{v_L(p)}{p-1}}$. This induced map on quotients can be viewed as a homomorphism of $\zp$-modules where $\zp$ acts by exponentiation. Moreover, this map is an isomorphism of $\zp$-modules whenever $i<\displaystyle{\frac{v_L(p)}{p-1}}$, or $i=\displaystyle{\frac{v_L(p)}{p-1}}$ and $L$ does not contain a nontrivial $p$-th root of unity \cite[Proposition (1) and (2), p. 15]{iso}.
The following two theorems extend this to the setting of Lubin-Tate formal group laws.
\begin{theorem}\label{FV}
    When $1\leq i \leq \displaystyle{\frac{v_L(\pi)}{q-1}},$ the series $[\pi](X)$ induces an $O_K$-module homomorphism
    \begin{equation}\label{iso}
        \ltmli/\ltmlii \longrightarrow \ltmlqi/\ltmlqii,
    \end{equation}
    and it is an $O_K$-module isomorphism if $i <\displaystyle{\frac{v_L(\pi)}{q-1}}.$
\end{theorem}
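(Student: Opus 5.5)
We use the expansion (\ref{pnxshape}) with $n=1$: for $x\in\mml$ there is $y\in O_L$ with $[\pi](x)=x^q+\pi x+\pi x^2y$. Suppose $1\le i\le v_L(\pi)/(q-1)$, i.e. $qi\le v_L(\pi)+i$.

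\emph{Well-definedness of the map into $\ltmlqi$.} For $x\in\mml^i$, both $x^q$ and $\pi x$ have valuation $\ge qi$, and $v_L(\pi x^2y)\ge v_L(\pi)+2i>qi$. Hence $[\pi]$ sends $\ltmli$ into $\ltmlqi$. Similarly, for $x\in\mml^{i+1}$ we have $v_L(x^q)\ge qi+q$, $v_L(\pi x)\ge v_L(\pi)+i+1\ge qi+1$, and $v_L(\pi x^2y)>qi$. So $[\pi]$ sends $\ltmlii$ into $\ltmlqii$.

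\emph{Homomorphism.} $[\pi]$ is an $O_K$-module endomorphism of $\ltml$, since $[\pi](F(X,Y))=F([\pi](X),[\pi](Y))$ and $[\pi]\circ[a]=[a]\circ[\pi]$. By the previous step it maps the submodule $\ltmli$ into $\ltmlqi$ and $\ltmlii$ into $\ltmlqii$, so it descends to an $O_K$-module homomorphism on the quotients in (\ref{iso}).

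\emph{Bijectivity when $i<v_L(\pi)/(q-1)$.} Now $qi<v_L(\pi)+i$. Compose with the isomorphisms (\ref{isotores}), which identify both quotients with $k_L$ via $u\varpi^i\mapsto \bar u$ and $w\varpi^{qi}\mapsto\bar w$. For $u\in O_L$ we get
$$[\pi](u\varpi^i)=u^q\varpi^{qi}+\pi u\varpi^i+\pi u^2\varpi^{2i}y\equiv u^q\varpi^{qi}\bmod{\mml^{qi+1}},$$
because $v_L(\pi u\varpi^i)\ge v_L(\pi)+i>qi$. So the induced map $k_L\to k_L$ is $\bar u\mapsto\bar u^q$, which is a bijection of the finite field $k_L$. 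It is additive in characteristic $p$, consistent with the homomorphism property, and injective since $k_L$ is a field. A bijective $O_K$-module homomorphism is an isomorphism.

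The only delicate point is checking the strict inequality $v_L(\pi)+i+1>qi$ for the image of $\ltmlii$ at the boundary case $i=v_L(\pi)/(q-1)$. It holds because $v_L(\pi)+i=qi$ there. In that boundary case the induced map is $\bar u\mapsto\bar u^q+\bar\varepsilon\bar u$, which need not be injective; this is why the statement asserts only a homomorphism there.
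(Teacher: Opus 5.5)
Your proof is correct and follows the paper's argument. You use the same expansion $[\pi](x)=x^q+\pi x+\pi x^2y$ and the same valuation estimates for well-definedness, and you get $O_K$-linearity from $[\pi]$ commuting with $F$ and with each $[a]$. The only difference is in the bijectivity step. You identify the induced map with the bijection $\bar u\mapsto\bar u^q$ on $k_L$. The paper instead shows injectivity from $v_L([\pi](x))=qi$ whenever $v_L(x)=i$, and then gets surjectivity by counting, since both quotients have order $q^f$.
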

\begin{proof}
First we show $[\pi](\ltmli) \subseteq \ltmlqi$.
For $x\in \ltmli$, write 
$  [\pi](x) = x^q+\pi x+\pi x^2 y,$
for some $y \in O_L$. To show $[\pi](x) \in \ltmlqi$, the case $x=0$ is clear, so let $x \neq 0$. Since $v_L(x) \geq i$,
\begin{equation*}\label{fri2}
    v_L([\pi](x))\geq \min(qv_L(x), v_L(\pi)+v_L(x)) \geq \min(qi, v_L(\pi)+i).
\end{equation*}
When $1 \leq i \leq \displaystyle{\frac{v_L(\pi)}{q-1}}$, $\min(qi,v_L(\pi)+i)=qi$ so $v_L([\pi](x)) \geq qi$ hence $[\pi](\ltmli) \subseteq \ltmlqi$. Similarly, $[\pi](\ltmlii)\subseteq \ltmlqii$ so the induced map (\ref{iso}) on quotients is well defined. Moreover, (\ref{iso}) is $O_K$-linear due to $[a] \circ [\pi]=[\pi]\circ [a]$ in $O_K\llb X \rrb $ for all $a \in O_K$. The map (\ref{iso}) is injective if $1\leq i<\displaystyle{\frac{v_L(\pi)}{q-1}}$: if $v_L(x)=i$ then $$v_L([\pi](x))=\min(qi,v_L(\pi)+i)=qi,$$ so $[\pi](x) \not\in \ltmlqii$. Surjectivity now follows as both quotients in (\ref{iso}) are finite with order $q^f$ by (\ref{isotores}), where $f$ is the residue field degree of $L|K$. We conclude that for $1\leq i < \displaystyle{\frac{v_L(\pi)}{q-1}}$, the Lubin-Tate series $\px$ induces an isomorphism of $O_K$-modules $\ltmli/\ltmlii \longrightarrow  \ltmlqi/\ltmlqii.$
\end{proof}
\begin{theorem}\label{regisolem}
   Let $L|K$ be a finite extension with ramification index divisible by $q-1$. \linebreak Fix $i:=\displaystyle{\frac{v_L(\pi)}{q-1}} \in \z^+$. The series $\px$ induces an isomorphism of $O_K$-modules
  \begin{equation}\label{iso*}
        \ltmli/\ltmlii \longrightarrow  \ltmlqi/\ltmlqii
    \end{equation}
    if and only if $L|K$ is $\pi$-regular.
\end{theorem}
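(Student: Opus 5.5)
By Theorem \ref{FV}, the series $\px$ already induces a well-defined $O_K$-module homomorphism (\ref{iso*}) when $i=\frac{v_L(\pi)}{q-1}$. Both quotients have order $q^f$ by (\ref{isotores}), so the map is an isomorphism if and only if it is injective. Injectivity means that no $x\in\mml$ with $v_L(x)=i$ satisfies $[\pi](x)\in\mml^{qi+1}$. The plan is to translate this condition into the congruence (\ref{reg1}) and then apply Lemma \ref{equivdefofreg}.

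Fix a prime $\varpi$ of $L$ and write $\pi=\varepsilon\varpi^{v_L(\pi)}$ with $\varepsilon\in O_L^\times$. Since $v_L(\pi)=(q-1)i$, we have $\pi=\varepsilon\varpi^{(q-1)i}$. Take $x=u\varpi^i$ with $u\in O_L^\times$. By (\ref{pnxshape}) with $n=1$ there is $y\in O_L$ such that
$$[\pi](x)=u^q\varpi^{qi}+\varepsilon u\varpi^{qi}+\varepsilon u^2\varpi^{qi+i}y=\varpi^{qi}\lb u^q+\varepsilon u+\varepsilon u^2\varpi^i y\rb.$$
Because $i\geq 1$, the last term lies in $\mml$. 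Hence $[\pi](x)\in\mml^{qi+1}$ if and only if $u^q+\varepsilon u\equiv 0\bmod{\mml}$. So the kernel of (\ref{iso*}) is nontrivial exactly when some unit $u$ satisfies (\ref{reg1}).

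Since $\frac{v_L(\pi)}{q-1}\in\z$ by hypothesis, Lemma \ref{equivdefofreg} says that $L|K$ is $\pi$-regular if and only if no such $u$ exists. Combining the two equivalences, (\ref{iso*}) is injective, and hence an isomorphism, if and only if $L|K$ is $\pi$-regular.

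There is no serious obstacle; the one point needing care is the correspondence between nonzero classes and valuations. A class in $\ltmli/\ltmlii$ is nonzero exactly when its representative has valuation exactly $i$, so it can be written $u\varpi^i$ with $u$ a unit. Its image is zero exactly when $[\pi]$ of the representative lies in $\mml^{qi+1}$. This uses that membership in $\ltmlqii$ is a condition on valuation alone, as in the footnote to (\ref{isotores}).
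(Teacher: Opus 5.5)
Your proposal is correct and follows essentially the same route as the paper's proof. Both use Theorem \ref{FV} and the equal orders $q^f$ to reduce to injectivity, write $x=u\varpi^i$, and use $qi=i+v_L(\pi)$ to get $[\pi](x)\equiv\varpi^{qi}(u^q+\varepsilon u)\bmod{\mml^{qi+1}}$, then conclude by Lemma \ref{equivdefofreg}.
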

\begin{proof}
By Theorem \ref{FV}, (\ref{iso*}) is a well-defined homomorphism of $O_K$-modules. Since both the domain and codomain in (\ref{iso*}) have order $q^f$ where $f$ is the residue field degree of $L|K$, it suffices to prove that (\ref{iso*}) is injective if and only if $L|K$ is $\pi$-regular.

The map (\ref{iso*}) fails to be injective if and only if there is an $x \in \mml$ such that $v_L(x)=i$ and $v_L([\pi](x))\geq qi+1$. As in the proof of Theorem \ref{FV}, write
\begin{equation}\label{regiso1}
    [\pi](x) = x^q+\pi x+\pi x^2 y
\end{equation}
for some $y \in O_L$. Since $v_L(x)=i$, there is some $u \in O_L^\times$ such that $x=u\varpi^i$. Substitute $x=u\varpi^i$ and $\pi=\varepsilon \varpi^{v_L(\pi)}$ into (\ref{regiso1}) to get
\begin{equation}
    [\pi](x) =u^q\varpi^{qi}+\varepsilon u\varpi^{i+v_L(\pi)}+\varepsilon u^2 \varpi^{2i+v_L(\pi)}y.
\end{equation}
By the definition of $i$ we have $qi=i+v_L(\pi)$, so
$$[\pi](x) \equiv \varpi^{qi}(u^q+\varepsilon u)\bmod{\mml^{qi+1}}.$$
In particular, since $i=\displaystyle{\frac{v_L(\pi)}{q-1} \in \z^+}$, 
\begin{align*}
    v_L([\pi](x)) \geq qi+1 &\iff u^q+\varepsilon u\equiv 0\bmod{\mml} \iff L|K \text{ fails to be } \pi\text{-regular}
\end{align*}
by Lemma \ref{equivdefofreg}.
Hence (\ref{iso*}) is injective if and only if $L|K$ is $\pi$-regular. 
\end{proof}
\begin{remark}\label{rmk1}
    As a consequence of Theorems \ref{FV} and \ref{regisolem}, the series $\pn(X)$ induces isomorphisms 
$$\ltmli/\ltmlii \stackrel{\cong}{\longrightarrow} \mathcal{F}(\mml^{q^ni})/\mathcal{F}(\mml^{q^ni+1}) \ \text{ if } \displaystyle{1\leq i <\frac{v_L(\pi)}{q^{n-1}(q-1)}},$$
and an isomorphism
$$\ltmli/\ltmlii \stackrel{\cong}{\longrightarrow} \mathcal{F}(\mml^{q^ni})/\mathcal{F}(\mml^{q^ni+1}) \text{ for } i =\frac{v_L(\pi)}{q^{n-1}(q-1)} \iff L|K \text{ is } \pi\text{-regular}.$$
\end{remark}

The following theorem extends part (3) of the Proposition in \cite[p. 15]{iso} which says for a finite extension $L|\qp$ with ramification index $e$, the $p$-th power map induces an isomorphism of $\zp$-modules
$$(1+\mml^i) /(1+\mml^{i+1})\longrightarrow (1+\mml^{i+e})/(1+\mml^{i+e+1})\ \text{ when } i> \displaystyle{\frac{e}{p-1}}.$$ 
\begin{theorem}\label{FV3}
  When $i > \displaystyle{\frac{v_L(\pi)}{q-1}}$, the series $[\pi](X)$ induces an $O_K$-module isomorphism
  \begin{equation}\label{iso**}
      \ltmli/\ltmlii \longrightarrow \mathcal{F}(\mml^{i+v_L(\pi)})/\mathcal{F}(\mml^{i+v_L(\pi)+1})
  \end{equation}
\end{theorem}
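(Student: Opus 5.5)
The argument follows the proof of Theorem \ref{FV}, with the roles of the two terms in the expansion of $[\pi](x)$ reversed. For $x \in \ltmli$, Lemma \ref{genlemgen} (specifically (\ref{pnxshape}) with $n=1$) lets us write $[\pi](x)=x^q+\pi x+\pi x^2y$ with $y\in O_L$. The plan has three steps: show the map is well defined on quotients, check it is $O_K$-linear, and then prove injectivity and deduce surjectivity by counting.

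For well-definedness, take $x\neq 0$ with $v_L(x)\geq i>\frac{v_L(\pi)}{q-1}$. Then $(q-1)v_L(x)>v_L(\pi)$, so $v_L(x^q)>v_L(\pi x)$. Also $v_L(\pi x^2y)\geq v_L(\pi)+2v_L(x)>v_L(\pi x)$. Hence
$$v_L([\pi](x))=v_L(\pi)+v_L(x)\geq i+v_L(\pi),$$
which is just the second case of (\ref{valpix}). So $[\pi](\ltmli)\subseteq\mathcal{F}(\mml^{i+v_L(\pi)})$. Since $i+1>\frac{v_L(\pi)}{q-1}$ as well, the same computation gives $[\pi](\ltmlii)\subseteq \mathcal{F}(\mml^{i+v_L(\pi)+1})$.

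Next, $[\pi](X)$ is a homomorphism for the formal group structure by (\ref{fgcomwithlt}), and it commutes with every $[a]$. Therefore (\ref{iso**}) is a well-defined $O_K$-module homomorphism.

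For injectivity, suppose $v_L(x)=i$ exactly. The displayed valuation formula gives $v_L([\pi](x))=i+v_L(\pi)$, so $[\pi](x)\notin\mathcal{F}(\mml^{i+v_L(\pi)+1})$. Thus only the zero class maps to the zero class. By (\ref{isotores}), both quotients have order $q^f$, where $f$ is the residue degree. An injective map between finite sets of the same size is bijective, so (\ref{iso**}) is an isomorphism.

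The only real point of care is the strict inequality $v_L(x^q)>v_L(\pi x)$, which needs $i>\frac{v_L(\pi)}{q-1}$ strictly. This is why, unlike in Theorem \ref{regisolem}, no $\pi$-regularity hypothesis is required here. Everything else is routine and parallels Theorem \ref{FV}.
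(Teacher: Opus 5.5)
Your proposal is correct and follows the paper's proof essentially step for step. Both arguments get well-definedness and injectivity from the valuation formula $v_L([\pi](x))=v_L(\pi)+v_L(x)$ for $v_L(x)>\frac{v_L(\pi)}{q-1}$, get $O_K$-linearity from $[\pi]$ commuting with the formal group law and with the $[a]$, and get surjectivity by comparing the orders $q^f$ of the two quotients.
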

\begin{proof}
This map is well defined by (\ref{valpix}): if $v_L(x)>\displaystyle{\frac{v_L(\pi)}{q-1}}$ then $v_L([\pi](x))=v_L(\pi)+v_L(x)$ so
$$[\pi](\ltmli) \subseteq \mathcal{F}(\mml^{i+v_L(\pi)})$$ for all $i> \displaystyle{\frac{v_L(\pi)}{q-1}}$. The map (\ref{iso**}) is $O_K$-linear by the way $\ltmli$ is an $O_K$-module and it is injective by (\ref{valpix}) since if $v_L(x)=i$ then $v_L([\pi](x))=i+v_L(\pi)$, so $[\pi](x)$ is nonzero in $ \mathcal{F}(\mml^{i+v_L(\pi)})/\mathcal{F}(\mml^{i+v_L(\pi)+1})$. Finally, since $$|\ltmli/\ltmlii |=q^f=| \mathcal{F}(\mml^{i+v_L(\pi)})/\mathcal{F}(\mml^{i+v_L(\pi)+1})|,$$ the map (\ref{iso**}) is surjective.
\end{proof}
By Theorem \ref{decomplemma}, $\ltml$ is a finitely generated $O_K$-module, so by Nakayama's lemma a set $\mathcal{B} \subseteq \ltml$ generates $\ltml$ as an $O_K$-module if and only if the image of $\mathcal{B}$ in $\ltml/[\pi](\ltml)$ generates it as a $k$-vector space.

\begin{theorem}\label{basisthm1}
 For a $\pi$-regular extension $L|K$ with uniformiser $\varpi$ and residue field degree $f$, the set of $[L:K]$ elements 
 \begin{equation*}
     \mathcal{B}_L:=\left\{\zeta_i \varpi^j \mid 1\leq i \leq f, \, 1\leq j \leq \frac{qv_L(\pi)}{q-1}, \, q \nmid j \right \}
 \end{equation*}
 is an $O_K$-basis of $\ltml$, where the $\zeta_i$'s in $O_L$ are any lift of a $k$-basis for $k_L$.
\end{theorem}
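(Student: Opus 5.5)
Since $L|K$ is $\pi$-regular, Theorem \ref{decomplemma} gives $\ltml \cong O_K^{[L:K]}$, a free module of rank $[L:K]$. The plan is to show that $\mathcal{B}_L$ generates $\ltml$. First, $|\mathcal{B}_L|=[L:K]$: writing $e=v_L(\pi)$ and $[L:K]=ef$, the number of $j$ with $1\le j\le qe/(q-1)$ and $q\nmid j$ is $e$. Indeed, among $1,\dots,qe/(q-1)$ (taking floors if needed), the multiples of $q$ number $\lfloor e/(q-1)\rfloor$, and $\lfloor qe/(q-1)\rfloor-\lfloor e/(q-1)\rfloor=e$ because $qe/(q-1)=e+e/(q-1)$. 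A generating set of a free module of rank $n$ over a PID with exactly $n$ elements is a basis, since the surjection $O_K^n\to O_K^n$ splits and must have trivial kernel by rank. So it suffices to prove generation. By the Nakayama remark preceding the theorem, this reduces to showing that the image of $\mathcal{B}_L$ spans $\ltml/[\pi](\ltml)$ over $k$. Since this quotient is $(O_K/\pi)^{ef}$, it has dimension $ef$, and therefore spanning is equivalent to linear independence; either one will do.

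I would prove spanning by a filtration argument. I aim to show that every $x\in\ltml$ can be written as an $F$-sum of $O_K$-multiples of elements of $\mathcal{B}_L$ plus an element of $[\pi](\ltml)$, by successive approximation in $v_L$. Suppose $x\in\ltmlj$ for some level $j$. There are three cases.

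\emph{Case $q\nmid j$ and $j\le qe/(q-1)$.} Using (\ref{isotores}), the classes of $\zeta_i\varpi^j$ span $\ltmlj/\ltmljj$ over $k$. Lifting coefficients to $O_K$, we can subtract (with $+_F$) a combination $[a_1](\zeta_1\varpi^j)+_F\cdots$ and land in $\ltmljj$.

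\emph{Case $j=qi$ with $i\le e/(q-1)$.} Theorem \ref{FV}, together with Theorem \ref{regisolem} (this is where $\pi$-regularity is used, at $i=e/(q-1)$), gives the isomorphism $\ltmli/\ltmlii\cong\ltmlqi/\ltmlqii$ induced by $[\pi]$. Hence $x\equiv[\pi](z)$ modulo $\ltmlqii$ for some $z$, and we subtract an element of $[\pi](\ltml)$.

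\emph{Case $j>qe/(q-1)$.} Then $j-e>e/(q-1)$, so Theorem \ref{FV3} applied at level $j-e$ again writes $x$ as $[\pi](z)$ modulo $\ltmljj$.

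Iterating these steps pushes the error into $\ltmlj$ for arbitrarily large $j$. The remaining step is to show that $\ltmlj\subseteq[\pi](\ltml)$ for $j$ large. This follows from Theorem \ref{FV3} with completeness: $[\pi]$ maps $\mathcal{F}(\mml^{j-e})$ onto $\ltmlj$ by successive approximation, since $\ltmlj$ is complete. Alternatively, one can use $\lpi$, which is an isomorphism on $\dpil$ and satisfies $\pi\mml^{j-e}=\mml^j$. After finitely many steps we therefore have $x\in\langle\mathcal{B}_L\rangle+_F[\pi](\ltml)$, which gives spanning modulo $[\pi]$.

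The main obstacle is bookkeeping: checking that the cases exhaust all $j$, and in particular the boundary case $j=qe/(q-1)$ when $e/(q-1)\in\z$. That boundary is exactly where regularity enters, and without regularity the map of Theorem \ref{regisolem} fails to be surjective, so the argument must invoke it precisely there. A minor point is that the coefficient lifts and the $F$-sums only agree with ordinary sums to first order. This is harmless because each step only needs congruence modulo the next filtration level, via (\ref{isotores}).
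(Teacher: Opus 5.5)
Your proposal is correct and follows essentially the same route as the paper: you reduce generation to the quotient $\ltml/[\pi](\ltml)$ via Nakayama, use Theorems \ref{FV}, \ref{regisolem} (at the boundary level $qv_L(\pi)/(q-1)$) and \ref{FV3} to discard the levels divisible by $q$ and those above $qv_L(\pi)/(q-1)$, and then conclude from the count $|\mathcal{B}_L|=[L:K]$ together with freeness from Theorem \ref{decomplemma}. Your successive-approximation bookkeeping, including the check that $\mathcal{F}(\mml^N)\subseteq[\pi](\ltml)$ for large $N$, makes explicit what the paper's ``suppression of levels by Nakayama's lemma'' leaves implicit.
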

\begin{proof}
  Pick $\zeta_1,\ldots,\zeta_f \in O_L$ to be a lift of a $k$-basis for $k_L$ and fix $j \geq 1$. Let $\mathcal{C}\subseteq O_K$ be a set of coset representatives for the residue field of $K$. We start by defining a \textit{generating set of level $j$}. Note that any coset representative $x$ of $\ltmlj/\ltmljj$ can be written uniquely in the form
  \begin{equation}\label{cosetrep}
      x=\Fsum_{i=1}^f\,\,\,\,\,\,[c_i](\zeta_i\varpi^j), \text{ for some } c_i \in \mathcal{C},
  \end{equation}
  where $\sum_F$ denotes repeated addition with respect to the Lubin-Tate module structure on $\ltml$. To see why, note that
  $$\Fsum_{i=1}^f\,\,\,\,\,\,[c_i](\zeta_i\varpi^j) \equiv \sum_{i=1}^f c_i\zeta_i\varpi^j\bmod{\mml^{j+1}}$$
  In particular, 
  $\displaystyle{\Fsum_{i=1}^f\,\,\,\,\,\,[c_i](\zeta_i\varpi^j)}$ is mapped to $\displaystyle{\sum_{i=1}^f c_i\zeta_i}$ in $k_L$ by (\ref{isotores}). Since any element of $k_L$ can be written in this form, pulling back via the isomorphisms
$$\ltmlj/\ltmljj \stackrel{\cong}{\longrightarrow}k_L$$
  (as seen in (\ref{isotores})) proves the claim. Since any coset representative $x$ of $\ltmlj/\ltmljj$ can be written as in (\ref{cosetrep}), the set 
  $$B_j:=\{\zeta_i\varpi^j\mid 1\leq i \leq f\}$$
  generates $\ltmlj/\ltmljj$ as an $O_K$-module with its Lubin-Tate structure. Following the approach in \cite[Corollary 4.3.19]{cohen}, we call $B_j$ a \textit{generating set of level $j$} and we say that the collection of these generating sets 
$$\displaystyle{\bigcup_{j\geq 1} B_j}$$
forms a \textit{generating system} of $\ltml$.
By Theorem \ref{FV3}, $\px$ induces isomorphisms 
$$\mathcal{F}(\mml^j)/\mathcal{F}(\mml^{j+1}) \longrightarrow \mathcal{F}(\mml^{j+v_L(\pi)})/\mathcal{F}(\mml^{j+v_L(\pi)+1})$$
when $j > \displaystyle{\frac{v_L(\pi)}{q-1}} $, so $[\pi](B_j)$ is a generating set of level $j+v_L(\pi)$ whenever $j > \displaystyle{\frac{v_L(\pi)}{q-1}} $. In particular, it follows by Nakayama's lemma that the generating set of level $j$ where $j>\displaystyle{\frac{qv_L(\pi)}{q-1}}$
can be suppressed from our generating system of $\ltml$ and what remains still generates $\ltml$ as an $O_K$-module.  

Moreover, if $1\leq j \leq \displaystyle{\frac{v_L(\pi)}{q-1}}$, $[\pi](B_j)$ is a generating set of level $qj$ by Theorems \ref{FV} and \ref{regisolem} since $\px$ induces isomorphisms of $O_K$-modules
$$\mathcal{F}(\mml^j)/\mathcal{F}(\mml^{j+1}) \longrightarrow \mathcal{F}(\mml^{qj})/\mathcal{F}(\mml^{qj+1})$$
when $1\leq j \leq \displaystyle{\frac{v_L(\pi)}{q-1}}$. In particular, it follows again by Nakayama's lemma that the levels $j$ divisible by $q$ can be suppressed from our generating system of $\ltml$.
We are left with the generating system
\begin{equation*}
     \mathcal{B}_L:=\left\{\zeta_i \varpi^j \mid 1\leq i \leq f, \, 1\leq j \leq \frac{qv_L(\pi)}{q-1}, \, q \nmid j \right \}, 
 \end{equation*}
 of $\ltml$, where
 \begin{equation}\label{sizeofbl}
     |\mathcal{B}_L|=f\lb \left\lfloor \frac{qv_L(\pi)}{q-1}\right\rfloor -  \left\lfloor \frac{v_L(\pi)}{q-1}\right\rfloor \rb =fv_L(\pi)=[L:K].
 \end{equation}
Since $L|K$ is $\pi$-regular, $\mup(L)=\{0\}$ and thus it follows by Theorem \ref{decomplemma} that $\ltml$ is a free $O_K$-module of rank $[L:K]$ and thus (\ref{sizeofbl}) implies that the generating system $\mathcal{B}_L$ is an $O_K$-basis of $\ltml$.
\end{proof}
\begin{remark}\label{rmk410}
 In the proof of Theorem \ref{basisthm1} we used $\pi$-regularity only once; to remove the level $j=\displaystyle{\frac{qv_L(\pi)}{q-1}}$ from our generating system of $\ltml$ if $\displaystyle{\frac{v_L(\pi)}{q-1}} \in \z$. To suppress $j=\displaystyle{\frac{qv_L(\pi)}{q-1}}$ when $\displaystyle{\frac{v_L(\pi)}{q-1}} \in \z$ we used Nakayama's lemma and that $\px$ induces an $O_K$-module homomorphism
\begin{equation}\label{rmkeq}
  \ltmlj/\ltmljj \longrightarrow\ltmlqj/\ltmlqjj  \text{ for }j=\displaystyle{\frac{qv_L(\pi)}{q-1}},
\end{equation}
 that is an isomorphism of $O_K$-modules if $L|K$ is $\pi$-regular, by Theorem \ref{regisolem}. When we introduce nontrivial $[\pi]$-torsion later in Section \ref{extension}, we will have to add back the level $j=\displaystyle{\frac{qv_L(\pi)}{q-1}}$ (which is an integer since $K_\pi \subseteq L \implies q-1\mid v_L(\pi)$) into our generating system of $\ltml$. What remains will be a spanning set of $\ltml$ of size $[L:K]+f$ (this is Theorem \ref{genspanthm}).
\end{remark}
\begin{remark}
Theorem \ref{basisthm1} includes the classical multiplicative case of $1+\mml$ as a $\zp$-module, as given in Theorem \ref{cohen} when $\mu_{p^\infty}(L)=\{1\}$. Take $K=\qp$, $[p](X)=(1+X)^p-1$, and a shift by one to switch the domain from the maximal ideal $\mml$ to the principal units $1+\mml$. The analogue of (\ref{cosetrep}) in the multiplicative setting says that any coset representative $x$ of $(1+\mml^j)/(1+\mml^{j+1})$ can be written uniquely in the form 
$$\prod_{i=1}^f(1+\zeta_i\varpi^j)^{c_i} \ \text{ for some } c_i \in \{0,\ldots,p-1\},$$
where the $\zeta_i$'s are a lift of an $\fp$-basis of $\fpf$. To see why, note that
$$\prod_{i=1}^f(1+\zeta_i\varpi^j)^{c_i} \equiv 1+\sum_{i=1}^fc_i\zeta_i\varpi^j \bmod{\mml^{j+1}}.$$
Hence, under the isomorphism $(1+\mml^j)/(1+\mml^{j+1}) \stackrel{\cong}{\longrightarrow} \fpf$, this product is mapped to $\displaystyle{\sum_{i=1}^fc_i\zeta_i}$.
\end{remark}
\section{A lower bound for the valuations in $\lpi(\ltml)$ for $\pi$-regular extensions}\label{main}
In this section we compute a basis for the $O_K$-module $\lpi(\ltml)$ when $L$ is a $\pi$-regular extension of $K$ and use it to determine an element of lowest valuation in $\lpi(\ltml)$. Recall that in this setting, 
\begin{equation*}
     \mathcal{B}_L:=\left\{\zeta_i \varpi^j \mid 1\leq i \leq f, \, 1\leq j \leq \frac{qv_L(\pi)}{q-1}, \, q \nmid j \right \}
 \end{equation*}
 is a basis for the $O_K$-module $\ltml$. Hence the set
 \begin{equation}\label{defofal}
     \widetilde{\mathcal{B}}_L:=\left\{\lpi(\zeta_i \varpi^j) \mid 1\leq i \leq f, \, 1\leq j \leq \frac{qv_L(\pi)}{q-1}, \, q \nmid j \right \}, 
 \end{equation}
 spans the additive group $\lpi(\mml)$ as an $O_K$-module.
 \begin{theorem}\label{basisthm2}
     For a $\pi$-regular extension $L|K$ with uniformiser $\varpi$ and residue field degree $f$, the set of $[L:K]$ elements 
    $$  \widetilde{\mathcal{B}}_L:=\left\{\lpi(\zeta_i \varpi^j) \mid 1\leq i \leq f, \, 1\leq j \leq \frac{qv_L(\pi)}{q-1}, \, q \nmid j \right \}$$
is a basis of the additive group $\lpi(\ltml)$ as an $O_K$-module, where the $\zeta_i$'s in $O_L$ are any lift of a $k$-basis for $k_L$.
 \end{theorem}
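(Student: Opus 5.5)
The plan is to transport the basis $\mathcal{B}_L$ of Theorem \ref{basisthm1} across the logarithm, using that $\lpi$ is injective on $\ltml$ when $L|K$ is $\pi$-regular.

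First, recall that $\lpi:\ltml\to L$ is a homomorphism of $O_K$-modules, where the source carries the Lubin-Tate structure and the target the usual additive structure. By Lemma \ref{kernel}, its kernel on $\mmc$ is $\mup$. Hence its kernel on $\ltml$ is $\mup\cap L=\mup(L)$, and this is $\{0\}$ by the definition of $\pi$-regularity. So $\lpi$ restricts to an isomorphism of $O_K$-modules
\[
\ltml\;\stackrel{\cong}{\longrightarrow}\;\lpi(\ltml).
\]

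Second, an isomorphism of $O_K$-modules carries a basis to a basis. By Theorem \ref{basisthm1}, $\mathcal{B}_L$ is an $O_K$-basis of $\ltml$, so its image $\widetilde{\mathcal{B}}_L=\lpi(\mathcal{B}_L)$ is an $O_K$-basis of $\lpi(\ltml)$. We check the two properties directly.

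\textbf{Spanning.} Every $x\in\ltml$ has the form $x=\Fsum [a_{ij}](\zeta_i\varpi^j)$. Applying $\lpi$ and using $\lpi(F(X,Y))=\lpi(X)+\lpi(Y)$ together with $\lpi([a](X))=a\lpi(X)$ gives
\[
\lpi(x)=\sum a_{ij}\,\lpi(\zeta_i\varpi^j).
\]

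\textbf{Independence.} Suppose $\sum a_{ij}\lpi(\zeta_i\varpi^j)=0$. Then $\lpi\bigl(\Fsum [a_{ij}](\zeta_i\varpi^j)\bigr)=0$. Injectivity forces $\Fsum[a_{ij}](\zeta_i\varpi^j)=0$, and independence of $\mathcal{B}_L$ then forces every $a_{ij}=0$.

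Finally, the map $\zeta_i\varpi^j\mapsto\lpi(\zeta_i\varpi^j)$ is injective, so $|\widetilde{\mathcal{B}}_L|=|\mathcal{B}_L|=[L:K]$ by (\ref{sizeofbl}). This is consistent with Theorem \ref{freelem}.

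The one point that needs care is the identity $\lpi\circ[a]=a\lpi$ for all $a\in O_K$, not just for $a=\pi$. It is part of the standard fact that $\lpi$ is an $O_K$-module homomorphism, as cited from Silverman in Section \ref{background}. The argument there is that $\lpi([a](X))/a$ is a logarithm for the same formal group with linear term $X$, so it equals $\lpi(X)$ by uniqueness. With that granted, no step is a real obstacle; the substantive work was already done in Theorem \ref{basisthm1}.
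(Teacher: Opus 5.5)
Your proposal is correct and matches the paper's proof. In both, $\pi$-regularity gives $\mup(L)=\{0\}$, so by Lemma~\ref{kernel} the logarithm is an $O_K$-module isomorphism $\ltml\to\lpi(\ltml)$, which carries the basis $\mathcal{B}_L$ of Theorem~\ref{basisthm1} to $\widetilde{\mathcal{B}}_L$; your explicit spanning and independence checks, and your justification of $\lpi\circ[a]=a\lpi$, just spell out what the paper leaves implicit.
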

\begin{proof}
Since $L|K$ is $\pi$-regular, $\mup(L)=\{0\}$ so $\lpi:\ltml \longrightarrow \lpi(\ltml)$ is an $O_K$-module isomorphism. Hence the $O_K$-basis $\mathcal{B}_L$ of $\ltml$ is mapped to an $O_K$-basis $\widetilde{B}_L$ of $\lpi(\ltml)$.
\end{proof}
Since the $\lpi(\zeta_i \varpi^j)$'s span the additive group $\lpi(\ltml)$ as an $O_K$-module, the valuations of the elements in $\lpi(\ltml)$ are bounded below by the valuations of the elements of $\widetilde{\mathcal{B}}_L$. The following theorem gives an explicit formula for the valuation of elements in $\lpi(\ltml)$.

\begin{theorem}\label{genlogvallem}
Suppose $L|K$ is $\pi$-regular, $x \in \mml$, and $\ell_x$ is the smallest nonnegative integer satisfying $[\pi^{\ell_x}](x) \in \dpil$. Then 
\begin{equation}\label{logval}
    v_L(\lpi(x)) = q^{\ell_x} v_L(x)-{\ell_x} v_L(\pi).
\end{equation}
Moreover,
\begin{gather}\label{ellxeq}
    \ell_x= \begin{cases}
        0 \quad \quad \quad \quad \quad \quad \,\,\quad \quad \quad \,\,\,\, \text{ if } x \in \dpil, \text{ i.e., } v_L(x)>\frac{v_L(\pi)}{q-1}, \\
        \llf\log_q\lb \frac{v_L(\pi)}{v_L(x)(q-1)}\rb \rrf +1\,\,\,\, \text{ otherwise},
    \end{cases}
\end{gather}
where $\log_q$ denotes the base $q$ logarithm in $\real$. 
\end{theorem}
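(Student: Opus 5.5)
The plan is to reduce to the disc $\dpil$, where $\lpi$ is an isometry, and then track valuations through the iterates $[\pi^n](x)$ using (\ref{valpix}). The case $x=0$ is excluded, since the statement concerns $v_L(x)$; $\pi$-regularity guarantees $\lpi(x)\neq 0$ for $x\neq 0$ by Lemma \ref{kernel}. Write $e:=v_L(\pi)$.

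\textbf{Step 1: the disc.} First I will show that for $y\in\dpil$ we have $v_L(\lpi(y))=v_L(y)$. Since $\lpi:\mathcal{F}(\dpil)\to\dpil$ is an isomorphism of $O_K$-modules for every disc of radius above $\frac{e}{q-1}$, it also restricts to isomorphisms $\ltmli\to\mml^i$ for all $i>\frac{e}{q-1}$. Taking $i=v_L(y)$ and $i=v_L(y)+1$ gives $v_L(\lpi(y))\ge v_L(y)$, and $\lpi(y)\notin\mml^{v_L(y)+1}$, since otherwise injectivity would force $y\in\mml^{v_L(y)+1}$. This settles the case $\ell_x=0$.

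\textbf{Step 2: iterating outside the disc.} Now let $v_L(x)\le \frac{e}{q-1}$. By (\ref{valpix}), as long as $v_L(x_n)<\frac{e}{q-1}$ we have $v_L([\pi](x_n))=qv_L(x_n)$. The boundary case $v_L(x_n)=\frac{e}{q-1}$ is where $\pi$-regularity enters. There Theorem \ref{regisolem} (injectivity on the level-$\frac{e}{q-1}$ quotient) gives $v_L([\pi](x_n))=q\cdot\frac{e}{q-1}$ exactly.

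By induction, $v_L([\pi^n](x))=q^n v_L(x)$ for $n\le \ell_x$, where $\ell_x$ is the first $n$ with $q^n v_L(x)>\frac{e}{q-1}$. Equivalently, $\ell_x$ is the smallest $n\geq 1$ with $q^n>\frac{e}{v_L(x)(q-1)}$, which is $\llf\log_q\lb\frac{e}{v_L(x)(q-1)}\rb\rrf+1$ since the argument is $\ge1$. This gives (\ref{ellxeq}), and $[\pi^{\ell_x}](x)\in\dpil$ with valuation $q^{\ell_x}v_L(x)$.

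\textbf{Step 3: combine.} By $O_K$-linearity, $\pi^{\ell_x}\lpi(x)=\lpi([\pi^{\ell_x}](x))$. By Step 1 its valuation is $q^{\ell_x}v_L(x)$. Subtracting $\ell_x e$ yields (\ref{logval}).

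The main obstacle is the boundary valuation $v_L(x_n)=\frac{e}{q-1}$ in Step 2, where (\ref{valpix}) says nothing. It is handled by Theorem \ref{regisolem}, and this is exactly where $\pi$-regularity is needed.
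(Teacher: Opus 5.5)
Your proof is correct and follows essentially the same route as the paper: you reduce to the disc via $\pi^{\ell_x}\lpi(x)=\lpi([\pi^{\ell_x}](x))$ and the isometry of $\lpi$ on $\mathcal{F}(\dpil)$, and $\pi$-regularity enters only through Theorem~\ref{regisolem} at the boundary level $\frac{v_L(\pi)}{q-1}$. The only difference is bookkeeping: you read off $\ell_x$ and $v_L([\pi^{\ell_x}](x))=q^{\ell_x}v_L(x)$ by tracking each iterate with (\ref{valpix}) and Theorem~\ref{regisolem}, whereas the paper gets the bound $v_L(x)\le \frac{v_L(\pi)}{q^{\ell_x-1}(q-1)}$ and the formula for $\ell_x$ from the congruence $[\pi^n](x)\equiv x^{q^n}\bmod{\dpil}$ of Lemma~\ref{genlemgen}, and then invokes Remark~\ref{rmk1}.
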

\begin{proof}
When $x=0$, the statement of the theorem is true since $\ell_0=0$ and $\lpi(0)=0$, so both sides of (\ref{logval}) are infinite. When $x\neq 0$, we'll consider the two cases $x \in \dpil$ and $x \not\in\dpil$ separately. 

For the first case, if $x \in \dpil$ then $\ell_x=0$. Since $\lpi$ is an isometry $\mathcal{F}(\dpil) \longrightarrow \dpil$, we have $v_L(\lpi(x))=v_L(x)$, which is exactly (\ref{logval}) since $\ell_x=0$.

Now consider the case $x \not \in \dpil$ so $v_L(x)\leq \displaystyle{\frac{v_L(\pi)}{q-1}}$. We will first prove (\ref{logval}), and then show 
\begin{equation}\label{logval.1}
   \ell_x= \llf\log_q\lb \frac{v_L(\pi)}{v_L(x)(q-1)}\rb \rrf +1 .
\end{equation}
Since $[\pi^{\ell_x}](x) \in\dpil$ we have
\begin{equation}\label{logval.2}
    v_L(\lpi([\pi^{\ell_x}](x)))=v_L([\pi^{\ell_x}](x)). 
\end{equation}
On the other hand, 
\begin{equation}\label{logval.3}
    v_L(\lpi([\pi^{\ell_x}](x)))=v_L(\pi^{\ell_x}\lpi(x))=\ell_xv_L(\pi)+v_L(\lpi(x)).
\end{equation}
Hence (\ref{logval.2}) and (\ref{logval.3}) imply 
\begin{equation}\label{logval.4}
    v_L(\lpi(x))=v_L([\pi^{\ell_x}](x))- \ell_xv_L(\pi).
\end{equation}
To prove (\ref{logval}), it remains to show that $v_L([\pi^{\ell_x}](x))=q^{\ell_x}v_L(x)$, which follows by Remark \ref{rmk1} if
\begin{equation}\label{logval.5}
    v_L(x) \leq \frac{v_L(\pi)}{q^{\ell_x-1}(q-1)}.
\end{equation}
To prove the claim (\ref{logval.5}) for $x \not\in\dpil$, suppose to the contrary that
$$ v_L(x) >\frac{v_L(\pi)}{q^{\ell_x-1}(q-1)}.$$
Then, 
$$v_L(x^{q^{\ell_x-1}})=q^{\ell_x-1}v_L(x) >\frac{v_L(\pi)}{q-1} \implies x^{q^{\ell_x-1}} \in \dpil$$
by the definition of $\dpil$. But $x^{q^{\ell_x-1}}\equiv [\pi^{\ell_x-1}](x)\bmod\dpil$ by Lemma \ref{genlemgen} so \linebreak $[\pi^{\ell_x-1}](x) \in \dpil$, contradicting the minimality of $\ell_x$. Thus we have proved (\ref{logval.5}) for $x \notin \dpil$, and in particular it follows that $v_L([\pi^{\ell_x}](x))=q^{\ell_x}v_L(x)$. Substitute this into (\ref{logval.4}) to see that
$$v_L(\lpi(x))=q^{\ell_x}v_L(x)-\ell_xv_L(\pi),$$
completing the proof of (\ref{logval}). To prove (\ref{logval.1}), for any $n\geq 1$ we have
$$\pn(x) \equiv x^{q^n}\bmod\dpil$$
by Lemma \ref{genlemgen}, so $\pn(x) \in \dpil$ if and only if $x^{q^n} \in \dpil$. By the definition of $\dpil$,
\begin{equation*}\label{logval.6}
     x^{q^n} \in \dpil \iff q^nv_L(x) > \frac{v_L(\pi)}{q-1} \iff n>\log_q\lb \frac{v_L(\pi)}{v_L(x)(q-1)}  \rb.
\end{equation*}
Thus the minimal $n$ for which $[\pi^n](x) \in \dpil$ when $x\notin \dpil$ is 
\[
 \ell_x= \llf\log_q\lb \frac{v_L(\pi)}{v_L(x)(q-1)}\rb \rrf +1 . \qedhere
\]

\end{proof}
\begin{remark}
    In the multiplicative setting ($K=\qp$ and $[p](X)=(1+X)^p-1$), Theorem \ref{genlogvallem} can be restated in the following way. Let $L$ be a finite extension of $\qp$ that does not contain a primitive $p$-th root of unity. Let $x \in 1+\mml$ and $\ell_x$ be the smallest nonnegative integer satisfying $x^{\ell_x} \in 1+\mathcal{D}_{\qp}(L)$. Then $v_L(\log(x))=p^{\ell_x}v_L(x)-\ell_x v_L(p)$.
\end{remark}
\begin{remark}
   The description of $v_L(\lpi(x))$ in Theorem \ref{genlogvallem} also appears in \cite[Proposition 6.4]{siggymoore}, where the proof uses Theorem \ref{wiles} to deduce (\ref{logval}), rather than taking cases on whether or not $x$ is in $\dpil$. 
\end{remark}
\begin{cor}\label{dontcareaboutunit}
    Suppose $L|K$ is $\pi$-regular. For any $x \in \mml$ and $u \in O_L^\times$ we have
    $$v_L(\lpi(u x))=v_L(\lpi(x)).$$
\end{cor}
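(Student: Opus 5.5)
The corollary should follow directly from Theorem \ref{genlogvallem}. The point is that the right-hand side of (\ref{logval}) depends on $x$ only through $v_L(x)$, together with the fixed quantities $q$ and $v_L(\pi)$. So the plan is to check that $ux$ and $x$ have the same valuation and the same value of $\ell$, and then read off equality of the two valuations of the logarithm.

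First, the trivial case. If $x=0$, then $ux=0$ and both sides equal $v_L(\lpi(0))=\infty$. So assume $x\neq 0$. Since $u\in O_L^\times$, we have $v_L(ux)=v_L(u)+v_L(x)=v_L(x)$. Also $ux\in\mml$, so Theorem \ref{genlogvallem} applies to both $x$ and $ux$; this is where $\pi$-regularity is used.

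Next, compare $\ell_{ux}$ with $\ell_x$ using the explicit formula (\ref{ellxeq}).
\begin{itemize}
\item The case distinction there is whether the element lies in $\dpil$, that is, whether its valuation exceeds $\frac{v_L(\pi)}{q-1}$. Since $v_L(ux)=v_L(x)$, the two elements fall in the same case.
\item In the second case the value $\llf\log_q\lb \frac{v_L(\pi)}{v_L(x)(q-1)}\rb\rrf+1$ again depends only on $v_L(x)$.
\end{itemize}
Hence $\ell_{ux}=\ell_x$. Alternatively, one could argue via Lemma \ref{genlemgen}: $[\pi^n](y)\equiv y^{q^n}\bmod\dpil$, and $v_L((ux)^{q^n})=v_L(x^{q^n})$, so membership of $[\pi^n](ux)$ and of $[\pi^n](x)$ in $\dpil$ agree for every $n$. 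Either route gives the same minimal index.

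Finally, substitute into (\ref{logval}):
$$v_L(\lpi(ux))=q^{\ell_{ux}}v_L(ux)-\ell_{ux}v_L(\pi)=q^{\ell_x}v_L(x)-\ell_xv_L(\pi)=v_L(\lpi(x)).$$
There is no real obstacle here. The only point needing care is that $\ell_x$ is defined as a minimum, and (\ref{ellxeq}) already expresses that minimum purely in terms of $v_L(x)$, so this is settled by Theorem \ref{genlogvallem}.
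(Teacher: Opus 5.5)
Your proof is correct and follows the paper's argument: show $\ell_{ux}=\ell_x$, then substitute into (\ref{logval}) using $v_L(ux)=v_L(x)$. The paper proves $\ell_{ux}=\ell_x$ by your alternative route through Lemma \ref{genlemgen}. Reading it off (\ref{ellxeq}), as in your main route, is equally valid, since that formula is part of Theorem \ref{genlogvallem}.
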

\begin{proof}
    If we can show $\ell_{u x}=\ell_x$ in Theorem \ref{genlogvallem} then we're done since $v_L(u)=0$ implies that
    $$q^{\ell_x}v_L(u x)-\ell_xv_L(\pi)=q^{\ell_x}v_L(x)-\ell_xv_L(\pi).$$
    If $x\in \dpil$ then $ux\in \dpil$ and both $\ell_x$ and $\ell_{ux}$ are $0$, so suppose not. To prove that $\ell_{u x}=\ell_x$ we will show for $\ell \geq 1$ that $[\pi^\ell](x) \in \dpil \iff [\pi^\ell](u x) \in \dpil.$ This follows by Lemma \ref{genlemgen}:
    \begin{align*}
         [\pi^\ell](u x) \in \dpil &\iff (ux)^{q^\ell} \in \dpil \iff x^{q^\ell} \in \dpil \iff [\pi^\ell](x) \in \dpil. \qedhere
    \end{align*}
\end{proof}
Note that when the ramification index $v_L(\pi)$ of $L|K$ is less than $q-1$,
$$\frac{v_L(\pi)}{q-1}<1 \implies \dpil=\mml \implies \lpi(\ltml)=\mml,$$
so $v_L(y)\geq 1$ for all $y \in \lpi(\ltml)$. In the case that $v_L(\pi)\geq q-1$ we now compute the minimal valuation of the elements in $\lpi(\ltml)$ when $L|K$ is $\pi$-regular by using Theorem \ref{genlogvallem}.
\begin{theorem}\label{minvalthm}
    Let $L|K$ be $\pi$-regular and suppose $\displaystyle{\frac{v_L(\pi)}{q-1}}\geq 1$, so 
\begin{equation}\label{g.1}
    q^{\gamma-1}\leq \frac{v_L(\pi)}{q-1}<q^\gamma
\end{equation}
    for a unique integer $\gamma \geq 1$. Then, for any prime $\varpi$ of $L$,
    \begin{equation}\label{g.2}
      \min_{y \in \lpi(\ltml)}(v_L(y))= v_L(\lpi(\varpi))=q^{\gamma}-\gamma v_L(\pi).
    \end{equation}
    In particular, the smallest disc containing $\lpi(\ltml)$ is $\{x \in L \mid v_L(x) \geq q^{\gamma}-\gamma v_L(\pi)\}$.
\end{theorem}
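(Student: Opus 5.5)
The plan is to reduce the minimum over all of $\lpi(\ltml)$ to a minimum over the basis $\widetilde{\mathcal{B}}_L$ of Theorem \ref{basisthm2}, and then compute each basis valuation with Theorem \ref{genlogvallem}. Every $y\in\lpi(\ltml)$ is an $O_K$-linear combination $\sum a_{ij}\lpi(\zeta_i\varpi^j)$ with $a_{ij}\in O_K$. Hence
$$v_L(y)\geq \min_{i,j} v_L(\lpi(\zeta_i\varpi^j)).$$
By Corollary \ref{dontcareaboutunit}, $v_L(\lpi(\zeta_i\varpi^j))=v_L(\lpi(\varpi^j))$ for any lift $\zeta_i$ that is a unit. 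If some $\zeta_i$ lies in $\mml$, we may replace it by $\zeta_i+1$ or similar; more simply, we only use that $v_L(\lpi(u\varpi^j))$ depends only on $j$ for every $u\in O_L^\times$. Since $\varpi\in\mml$ itself gives $\lpi(\varpi)\in\lpi(\ltml)$, it suffices to prove two things: $g(j):=v_L(\lpi(\varpi^j))\geq g(1)$ for all $1\leq j\leq \frac{qv_L(\pi)}{q-1}$, and $g(1)=q^\gamma-\gamma v_L(\pi)$. A cleaner route avoids the basis altogether: every nonzero $x\in\mml$ has $x=u\varpi^j$ with $j\geq1$, so it is enough to show $j\mapsto g(j)$ is minimized at $j=1$ over all $j\geq 1$. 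I will take this route.

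First I compute $g(1)$. With $v_L(x)=1$, formula (\ref{ellxeq}) gives
$$\ell_\varpi=\llf\log_q\lb\frac{v_L(\pi)}{q-1}\rb\rrf+1=\gamma$$
by (\ref{g.1}), since $v_L(\varpi)=1\leq \frac{v_L(\pi)}{q-1}$ means $\varpi\notin\dpil$. Then (\ref{logval}) yields $g(1)=q^\gamma-\gamma v_L(\pi)$.

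For general $x$ with $v_L(x)=j$, Theorem \ref{genlogvallem} gives $v_L(\lpi(x))=q^{\ell}j-\ell v_L(\pi)$ with $\ell=\ell_x$. The key inequality to prove is
$$q^{\ell}j-\ell v_L(\pi)\geq q^\gamma-\gamma v_L(\pi)\quad\text{for all } j\geq 1.$$
Set $h_j(n):=q^n j-nv_L(\pi)$ for integers $n\geq0$. The characterization of $\ell_x$ as the least $n$ with $q^n j>\frac{v_L(\pi)}{q-1}$ shows that $\ell$ minimizes $h_j$ over $n\geq 0$. Indeed,
$$h_j(n+1)-h_j(n)=q^n j(q-1)-v_L(\pi),$$
which is $\leq 0$ exactly when $q^n j\leq \frac{v_L(\pi)}{q-1}$, i.e. when $n<\ell$, and $>0$ when $n\geq\ell$. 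So $h_j(\ell)=\min_n h_j(n)$. Consequently
$$v_L(\lpi(x))=\min_n h_j(n)\geq \min_n h_1(n)=h_1(\gamma)=g(1),$$
using $h_j(n)\geq h_1(n)$ for $j\geq1$ and the same discrete-convexity observation for $j=1$, whose minimizer is $\ell_\varpi=\gamma$. This proves (\ref{g.2}) for every prime $\varpi$, and since $v_L$ takes integer values on $L$, the disc statement follows immediately.

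The main obstacle is recognizing that $\ell_x$ is precisely the minimizer of the discrete function $n\mapsto q^n v_L(x)-nv_L(\pi)$. Once that is seen, monotonicity in $j$ is trivial. I will check the sign computation of the consecutive differences carefully, including the boundary case of equality $q^nj=\frac{v_L(\pi)}{q-1}$, where the difference is $0$ and the minimum value is unaffected. The case $x\in\dpil$, where $\ell=0$, fits the same framework, because there $h_j(1)-h_j(0)>0$.
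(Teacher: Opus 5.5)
Your proof is correct, and it takes a genuinely different route from the paper's. The paper first uses the basis $\widetilde{\mathcal{B}}_L$ of Theorem \ref{basisthm2} and Corollary \ref{dontcareaboutunit} to reduce to the finitely many values $v_L(\lpi(\varpi^j))$ with $1\leq j\leq \frac{qv_L(\pi)}{q-1}$. It then verifies (\ref{claim1}) by hand with a case split. For $j>\frac{v_L(\pi)}{q-1}$ it proves $\frac{v_L(\pi)}{q-1}\geq q^\gamma-\gamma v_L(\pi)$ through the chain (\ref{a.1})--(\ref{a.2}). For $j\leq\frac{v_L(\pi)}{q-1}$ it separates $\ell_{\varpi^j}=\gamma$ from $\ell_{\varpi^j}\leq\gamma-1$ using the bracketing (\ref{claim1.4}).

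You instead use that $\lpi(\ltml)$ is by definition the image set $\{\lpi(x) : x\in\mml\}$. Theorem \ref{genlogvallem} therefore applies to every element directly, and no spanning set is needed. The key step is reading (\ref{ellxeq}) as ``$\ell_x$ is the least $n\geq 0$ with $q^n v_L(x)>\frac{v_L(\pi)}{q-1}$''. This makes $\ell_x$ a minimiser of the convex sequence $n\mapsto q^n v_L(x)-nv_L(\pi)$, so (\ref{logval}) becomes $v_L(\lpi(x))=\min_{n\geq 0}\bigl(q^n v_L(x)-nv_L(\pi)\bigr)$. From this, monotonicity in $v_L(x)$ and the value $q^\gamma-\gamma v_L(\pi)$ at $v_L(x)=1$ are immediate. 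The paper's two cases amount to your sign analysis of $h_j(n+1)-h_j(n)$, carried out instance by instance.

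Your route is shorter and more conceptual. It does not need Theorems \ref{basisthm1} and \ref{basisthm2} or Corollary \ref{dontcareaboutunit}. Because it uses the valuation formula only pointwise, it also carries over verbatim to the inequality of Theorem \ref{genvalthm2}. That gives $v_L(y)\geq q^\gamma-\gamma v_L(\pi)$ for all $y\in\lpi(\ltml)$ whenever $\frac{v_L(\pi)}{q-1}\geq 1$, without $\pi$-regularity, though the bound need not be attained. The paper's route keeps the result tied to its explicit basis, which is the organising theme of that section.

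One aside: your worry about a lift $\zeta_i$ lying in $\mml$ is moot, since lifts of the nonzero elements of a $k$-basis of $k_L$ are automatically units. It does no harm, since you abandon that route anyway.
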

\begin{proof}
Let $\gamma \geq 1$ be the unique integer satisfying (\ref{g.1}). By Theorem \ref{basisthm2} and Corollary \ref{dontcareaboutunit}, it suffices to show that 
$$\min\lb v_L(\lpi(\varpi^j)) \mid 1\leq j \leq \frac{qv_L(\pi)}{q-1}, \ q\nmid j\rb=v_L(\lpi(\varpi)).$$
Since $v_L(\varpi)=1\leq \displaystyle{\frac{v_L(\pi)}{q-1}}$, we have $\varpi\notin\dpil$, so by (\ref{ellxeq})
$$\ell_{\varpi}=\llf \log_q\lb \frac{v_L(\pi)}{q-1}\rb\rrf+1=\gamma. $$
In particular, it follows by Theorem \ref{genlogvallem} that
\begin{equation}\label{g.3}
    v_L(\lpi(\varpi))=q^\gamma-\gamma v_L(\pi).
\end{equation}
To prove (\ref{g.2}) it suffices by Theorem \ref{genlogvallem} and (\ref{g.3}) that we show
\begin{equation}\label{claim1}
    q^{\ell_{\varpi^j}}j-\ell_{\varpi^j}v_L(\pi) \geq q^\gamma-\gamma v_L(\pi) \text{ for } 1 \leq j \leq \frac{qv_L(\pi)}{q-1}.
\end{equation}
\begin{itemize}
    \item[\underline{Case 1:}] $\displaystyle{\frac{v_L(\pi)}{q-1}<j \leq \frac{qv_L(\pi)}{q-1}}$. Note that in this case $\ell_{\varpi^j}=0$ and (\ref{claim1}) becomes
\begin{equation}\label{claim1.1}
    j\geq q^\gamma- \gamma v_L(\pi) \text{ for all }j>\frac{v_L(\pi)}{q-1}.
\end{equation}
To prove (\ref{claim1.1}), we'll show
\begin{equation}\label{claim1.2}
    \frac{v_L(\pi)}{q-1}\geq q^\gamma-\gamma v_L(\pi).
\end{equation}
We have $\displaystyle{\frac{v_L(\pi)}{q-1} \geq q^{\gamma-1}}$ by (\ref{g.1}), and
\begin{align}\label{a.1}
    \frac{v_L(\pi)}{q-1} \geq q^{\gamma-1} &\implies \frac{v_L(\pi)}{q-1} +\gamma v_L(\pi)\geq q^{\gamma-1}+\gamma q^{\gamma-1}(q-1).
\end{align}
Also
\begin{align}\label{a.2}
 q^{\gamma-1}+\gamma q^{\gamma-1}(q-1)\geq q^\gamma &\iff  q(\gamma-1)\geq \gamma-1
\end{align}
and the right side of (\ref{a.2}) is clear for $\gamma \geq 1$. By (\ref{a.1}) and (\ref{a.2}),
$$ \frac{v_L(\pi)}{q-1} +\gamma v_L(\pi)\geq q^{\gamma-1}+\gamma q^{\gamma-1}(q-1) \geq q^\gamma,$$
which proves (\ref{claim1.2}). Since we are considering $j>\displaystyle{\frac{v_L(\pi)}{q-1}}$, (\ref{claim1.2}) proves (\ref{claim1.1}).\footnote{Actually, it proves $j>q^\gamma-\gamma v_L(\pi)$ for all $j>\frac{v_L(\pi)}{q-1}$, but this is stronger than we need for our purposes.}
The upshot is: we have now proved (\ref{claim1}) in Case $1$, so
\begin{equation}\label{claim1.3}
    v_L(\lpi(\varpi^j)) \geq v_L(\lpi(\varpi)) \text{ for all } \frac{v_L(\pi)}{q-1}< j\leq \frac{qv_L(\pi)}{q-1}.
\end{equation}
\item[\underline{Case 2:}] $1\leq j\leq \displaystyle{\frac{v_L(\pi)}{q-1}}$. Since (\ref{claim1}) is an equality when $j=1$, we may assume $1<j\leq \displaystyle{\frac{v_L(\pi)}{q-1}}$. For such $j$,
$$\ell_{\varpi^j}:=\llf \log_q\lb  \frac{v_L(\pi)}{j(q-1)}\rb \rrf +1$$
by (\ref{ellxeq}). In particular, $\ell_{\varpi^j}$ is the unique integer in $\{1,\ldots,\gamma\}$ such that
\begin{equation}\label{claim1.4}
    \frac{v_L(\pi)}{q^{\ell_{\varpi^j}}(q-1)}<j\leq \frac{v_L(\pi)}{q^{\ell_{\varpi^j}-1}(q-1)}.
\end{equation}
If $\ell_{\varpi^j}=\gamma$ then $j \geq 1$ implies that
$$q^{\ell_{\varpi^j}}j-\ell_{\varpi^j}v_L(\pi) =q^\gamma j-\gamma v_L(\pi)\geq q^\gamma-\gamma v_L(\pi),$$
thus proving (\ref{claim1}) for $1<j\leq \displaystyle{\frac{v_L(\pi)}{q-1}}$ when $\ell_{\varpi^j}=\gamma$. 
For $j$ in this range with $1\leq \ell_{\varpi^j}\leq \gamma-1$, we have
\begin{align*}
  q^{\ell_{\varpi^j}}j-\ell_{\varpi^j} v_L(\pi) &>\frac{v_L(\pi)}{q-1}-\ell_{\varpi^j} v_L(\pi) \text{ by  (\ref{claim1.4})} \\
 &\geq q^{\gamma-1}-(\gamma-1)v_L(\pi) \text{ by (\ref{g.1}) and }\ell_{\varpi^j}\leq \gamma-1\\
 &\geq q^\gamma-\gamma v_L(\pi) 
\end{align*}
since 
$$q^{\gamma-1}-(\gamma-1)v_L(\pi) \geq q^\gamma-\gamma v_L(\pi) \iff \frac{v_L(\pi)}{q-1} \geq q^{\gamma-1}.$$

We have now shown that
$$q^{\ell_{\varpi^j}}j-\ell_{\varpi^j} v_L(\pi) \geq q^\gamma- \gamma v_L(\pi) \text{ for all } 1\leq j \leq \frac{v_L(\pi)}{q-1},$$
which is equivalent to
\begin{equation}\label{claim1.5}
    v_L(\lpi(\varpi^j)) \geq v_L(\lpi(\varpi)) \text{ for all } 1\leq j \leq \frac{v_L(\pi)}{q-1}.
\end{equation}
\end{itemize}
Equations (\ref{claim1.3}) and (\ref{claim1.5}) together prove (\ref{g.2}).
\end{proof}
We conclude this section with an explicit example using Theorem \ref{minvalthm}.
\begin{example}
    Let $K=\qp$, and $[p](X)=(1+X)^p-1$ so that $\displaystyle{\log_{[p]}(X)=\log(1+X)}$
is the usual $p$-adic logarithm. Set $L:=\qp(\varpi)$ where $\varpi^{p+1}=p$. We will give a $\zp$-basis of $\log(1+\mml)$ and a precise lower bound on the elements of $v_L(\log(1+\mml))$ in this case. Since all extensions of $\q_2$ contain $-1$, we are only interested in $p>2$ here.

We will treat the cases $p=3$ and $p\geq 5$ separately.
\begin{itemize}
    \item Let us first assume $p \geq 5$. Then $L|\qp$ is $p$-regular by Lemma 4.2 since $v_L(p)=p+1$ and $p\geq 5$ implies that 
\begin{equation}\label{ex.1}
    1<\displaystyle{\frac{v_L(p)}{p-1}}<2.
\end{equation}
In particular, the ratio is not an integer. By Theorem \ref{basisthm2}, a $\zp$-basis of $\log(1+\mml)$ is given by 
$$\widetilde{B}_L=\{\log(1+\varpi^j)\mid 1\leq j \leq p+2, \ j\neq p\}.$$
In this example we will give another (more explicit) basis of $\log(1+\mml)$. The inequalities in (\ref{ex.1}) imply $\dpil=\mml^2$ so $\log(1+\mml^2)=\mml^2$. Since $(1+\mml)/(1+\mml^2) \cong \fp$, 
\begin{equation}\label{ex.2}
    1+\mml=\bigcup_{a^p=a}\lb (1+a\varpi)+(1+\mml^2)\rb 
\end{equation}
where the union is taken over a set of Teichm\"uller representatives for the residue field $\fp$. Take logs on both sides of (\ref{ex.2}) to get
$$\log(1+\mml)=\bigcup_{a^p=a}\lb \log((1+a\varpi))+\mml^2\rb$$
where for all $a \neq 0$,

\begin{align}\label{ex.5}
    \log(1+a\varpi) = \sum_{n\geq 1}\frac{(-1)^{n-1}(a\varpi)^n}{n} \equiv a\varpi+\frac{(a\varpi)^p}{p} \bmod{\mml^2}
\end{align}
since $n-v_L(n) \geq 2$ for all positive integers $n \not\in\{1,p\}$. With $a^p=a$ and $\varpi^{p+1}=p$ in (\ref{ex.5}),
$$\log(1+a\varpi)\equiv a\lb\varpi+\displaystyle{\frac1\varpi}\rb\bmod{\mml^2},$$
therefore
\begin{align*}
    \log(1+\mml)&= \bigcup_{a^p=a} \lb a\lb\varpi+\displaystyle{\frac1\varpi}\rb+\mml^2\rb.
\end{align*}
Note that $\log(1+\mml)/\mml^2$ is an additive group of order $p$ that is generated by the image of $\varpi+\displaystyle{\frac1\varpi}$, so we could instead write
\begin{equation}\label{ex.7}
    \log(1+\mml)=\bigcup_{b=0}^{p-1} \lb b\lb\varpi+\displaystyle{\frac1\varpi}\rb+\mml^2\rb=\zp  \lb\varpi+\displaystyle{\frac1\varpi}\rb+\mml^2.
\end{equation}
Since $O_L=\zp[\varpi]$, $\mml^2=\varpi^2 O_L$ as an additive group has $\zp$-basis given by
$\varpi^2,\varpi^3,\ldots, \varpi^{p+2}$. In particular, (\ref{ex.7}) can be rewritten as
\begin{equation}\label{ex.6}
    \log(1+\mml)=\zp\lb\varpi+\displaystyle{\frac1\varpi}\rb+\sum_{i=2}^{p+2}\zp\varpi^i.
\end{equation}
The right side of (\ref{ex.6}) can be refined: $\varpi^{p+1}=p$, and $p\cdot\lb\varpi+\displaystyle{\frac1\varpi}\rb=p\varpi+\varpi^p$. So, 
$$\varpi^{p+2}=p\varpi=p\lb\varpi+\displaystyle{\frac1\varpi}\rb-\varpi^p \in\zp\lb\varpi+\displaystyle{\frac1\varpi}\rb+\zp \varpi^p.$$
In particular, a refinement of (\ref{ex.6}) is
\begin{equation}\label{label}
\log(1+\mml)=\zp\lb\varpi+\displaystyle{\frac1\varpi}\rb+\sum_{i=2}^{p+1}\zp\varpi^i.
\end{equation}
Since $\varpi+\displaystyle{\frac1\varpi}, \varpi^2, \ldots,\varpi^{p+1}$ is a collection of $p+1$ elements which span the additive group $\log(1+\mml)$ as a $\zp$-module, they must form a basis since $\log(1+\mml)$ is a free $\zp$-module of rank $p+1$ by Theorem \ref{basisthm2}. By (\ref{label}) it is clear that the minimal valuation of the elements in $\log(1+\mml)$ is $-1$. We can deduce this from Theorem \ref{minvalthm} by noting that (\ref{ex.1}) implies that the $\gamma$ in Theorem \ref{minvalthm} is
$$\gamma:=\displaystyle{\left\lfloor\log_p\lb\frac{v_L(p)}{p-1}\rb \right\rfloor+1}=1, $$
since $p>2$. In particular, Theorem \ref{minvalthm} tells us the minimal valuation in $v_L(\log(1+\mml))$ is 
$$ v_L(\log(1+\varpi))=p-(p+1)=-1.$$
\item The case $p=3$ is similar to the previous case of $p\geq 5$. The difference in this case is that the additive group $\log(1+\mml)/\dpil$ has order 9, not 3. Let $\varpi=\sqrt[4]{3}$ so $L=\q_3(\sqrt[4]{3})$. Clearly $L$ contains the quadratic subfield $\q_3(\sqrt{3})$. If $L$ were to contain a primitive cube root of unity then it would also contain the subfield $\q_3(\sqrt{-3})$ and thus also $\q_3(i)$. But $\q_3(i)|\q_3$ is unramified whilst $L|\q_3$ is totally ramified. Thus $L|\q_3$ is $3$-regular. When $p=3$ we have
\begin{equation}\label{ex.8}
    \frac{v_L(p)}{p-1}=2 \implies \dpil=\mml^3
\end{equation}
so $\log(1+\mml^3)=\mml^3$. Working backwards from $\log(1+\mml^3)=\mml^3$ we will compute an explicit $\z_3$-basis of $\log(1+\mml)$ as before. Since $(1+\mml^2)/(1+\mml^3) \cong (1+\mml)/(1+\mml^2)\cong \mathbb{F}_3$, 
\begin{align}
\label{ex.9}   
\log(1+\mml)&=\bigcup_{-1\leq a \leq1}\lb \log(1+a\varpi)+\log(1+\mml^2)\rb\\
\label{ex.10} 
\log(1+\mml^2)&=\bigcup_{-1\leq a \leq1}\lb \log(1+a\varpi^2)+\mml^3\rb
\end{align}
by the same reasoning as before, where
\begin{align}
  \label{ex.11}  \log(1\pm \varpi) &\equiv (\pm\varpi) -\frac{(\pm\varpi)^2}{2}+\frac{(\pm\varpi)^3}{3}-\frac{(\pm\varpi)^6}{6}+\frac{(\pm\varpi)^9}{9}\bmod{\mml^3}, \text{ and} \\
   \label{ex.12} \log(1\pm \varpi^2) &\equiv (\pm\varpi^2)+\frac{(\pm \varpi^2)^3}{3}\equiv \mp\varpi^2 \bmod{\mml^3}.
\end{align}
The congruence in (\ref{ex.11}) follows from 
$n-v_L(n) \geq 3$ for all $n \notin \{1,2,3,6,9\}$ and the first congruence in (\ref{ex.12}) follows from $2n-v_L(n)\geq 3$ for all $n \not\in \{1,3\}$. The second congruence in (\ref{ex.12}) follows from $\varpi^4=3$ and $2\varpi^2 \equiv -\varpi^2\bmod{\mml^3}$. We can also refine (\ref{ex.11}): we have
\begin{align*}
    \log(1+\varpi)&\equiv \frac{1}{\varpi}-\varpi-\varpi^2\bmod{\mml^3}, \\
    \log(1-\varpi)&\equiv -\frac{1}{\varpi}+\varpi-\varpi^2 \bmod{\mml^3}.
\end{align*}
From this, and equations (\ref{ex.9}) and (\ref{ex.10}), we deduce that the group $\log(1+\mml)/\dpil$ has nine distinct coset representatives given by
$$0,\pm \varpi^2,\pm\lb\frac{1}{\varpi}-\varpi-\varpi^2\rb,\pm \lb-\frac{1}{\varpi}+\varpi-\varpi^2\rb, \pm \lb\frac{1}{\varpi}-\varpi\rb.$$
Over $\z_3$, these elements span the free module
\begin{equation}\label{ex.13}
    \z_3\varpi^2+\z_3\lb\frac1\varpi-\varpi\rb.
\end{equation}
As an additive $\z_3$-module, $\mml^3$ has basis $\varpi^3, \varpi^4, \varpi^5,\varpi^6$. Combine that with (\ref{ex.13}) to get
\begin{equation}\label{ex.14}
    \log(1+\mml)= \z_3\varpi^2+\z_3\lb\frac1\varpi-\varpi\rb+\sum_{j=3}^6\z_3\varpi^j.
\end{equation}
By Theorem \ref{basisthm2} $\log(1+\mml)$ is a free $\z_3$-module of rank $4$, but the right side of (\ref{ex.14}) has six summands. Since $\varpi^4=3$, we get $\varpi^6=3\varpi^2 \in \z_3\varpi^2$ and 
$$3\lb\frac1\varpi-\varpi\rb=\varpi^3-\varpi^5 \implies \varpi^5 \in \z_3\lb\frac1\varpi-\varpi\rb+\z_3\varpi^3.$$
So (\ref{ex.14}) can be refined:
$$ \log(1+\mml)= \z_3\varpi^2+\z_3\lb\frac1\varpi-\varpi\rb+\z_3\varpi^3+\z_3\varpi^4.$$
Thus we have found a basis. From this description of the image of the logarithm it is clear that the minimal valuation of the elements in $\log(1+\mml)$ is $-1$. We can also deduce this from Theorem \ref{minvalthm} by noting that 
$$\gamma:=\llf\log_3\lb2\rb\rrf+1=1$$
and the minimal valuation in $v_L(\log(1+\mml))$ is $v_L(\log(1+\varpi))=3-4=-1$.
\end{itemize}
\end{example}
\section{Nontrivial torsion and an $O_K$-basis of $\lpi(\ltmn)$}\label{extension}
    Whilst it is not true that $L=\q_2(\sqrt[3]{2})$ is a $2$-regular extension of $\q_2$ (due to $-1 \in \q_2$), one can show that the minimal element of $v_L(\log(1+\mml))$ is $-2$ (this is worked out in \cite{ontheimage}). In particular, this coincides exactly with what we would get if we tried to naively apply Theorem \ref{minvalthm}; in this \linebreak case $\gamma=\llf \log_2(3)\rrf+1=2$ implies that $2^\gamma-\gamma v_L(2)=4-6=-2$. 
 In this section we explore how some of the main results from Sections \ref{pireg} and \ref{main} can be extended to finite extensions $L|K$ that are not $\pi$-regular i.e., $K_\pi \subseteq L$. 
\begin{theorem}\label{genspanthm}
Let $L|K$ be a finite extension with $K_\pi \subseteq L$, uniformiser $\varpi \in O_L$, and residue field degree $f$.
\begin{enumerate}
    \item[(i)] The set of $[L:K]+f$ elements
$$\mathcal{S}_{L}:= \left \{\zeta_i \varpi^j \mid 1\leq i \leq f, \ 1\leq j <\frac{qv_L(\pi)}{q-1}, \ q\nmid j\right\} \cup \left \{ \zeta_i \varpi^{\frac{qv_L(\pi)}{q-1}}\mid 1\leq i \leq f \right\}$$
spans  $\ltml$ as an $O_K$-module, where the $\zeta_i$'s in $O_L$ are any lift of a $k$-basis for $k_L$.
    \item[(ii)] The set $\mathcal{S}_{L}$ is a minimal set of generators if and only if $L|K$ is totally ramified.
\end{enumerate}
\end{theorem}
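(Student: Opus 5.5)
For part (i) I will rerun the argument of Theorem \ref{basisthm1} verbatim, dropping only the one step that needs $\pi$-regularity (Remark \ref{rmk410}). Since $K_\pi\subseteq L$, the integer $e_0:=v_L(\pi)/(q-1)$ exists, and the levels $j\geq 1$ give generating sets $B_j=\{\zeta_i\varpi^j\}$ of $\ltmlj/\ltmljj$. By Theorem \ref{FV3}, for $j>e_0$ the set $[\pi](B_j)$ is a generating set of level $j+v_L(\pi)$. Nakayama's lemma for the finitely generated module $\ltml$ (Theorem \ref{decomplemma}) therefore lets me discard every level $j>qe_0=e_0+v_L(\pi)$. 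By Theorem \ref{FV}, for $1\leq j<e_0$ the set $[\pi](B_j)$ is a generating set of level $qj$, so the levels $qj<qe_0$ divisible by $q$ can be discarded. The only level that cannot be discarded is $j=qe_0$. Theorem \ref{regisolem} fails there: the map at level $e_0$ is not injective, hence not surjective. So I keep $B_{qe_0}$, and the survivors are exactly $\mathcal{S}_L$. For the count, the levels $1\leq j<qe_0$ with $q\nmid j$ number $qe_0-e_0=v_L(\pi)$, which gives $fv_L(\pi)+f=[L:K]+f$ elements.

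For part (ii), minimality means that $\mathcal{S}_L$ maps to a $k$-basis of $\ltml/[\pi](\ltml)$. By Theorem \ref{decomplemma}, $\ltml\cong\mup(L)\times O_K^{[L:K]}$. Writing $\mup(L)$, a finite submodule of the cyclic-type torsion, as cyclic $O_K/\pi^m$, we get $\dim_k \ltml/[\pi]\ltml=[L:K]+1$. Since $\mathcal{S}_L$ spans, it is minimal exactly when $|\mathcal{S}_L|=[L:K]+f$ equals this dimension, i.e.\ when $f=1$, i.e.\ when $L|K$ is totally ramified.

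I expect the main obstacle to be justifying that $\mup(L)$ is cyclic and nonzero. It is nonzero because $K_\pi\subseteq L$. It is cyclic because it is a finite $O_K$-submodule of $\mup\cong K/O_K$, and every finite submodule of $K/O_K$ is of the form $\pi^{-m}O_K/O_K$. Having this makes $\mup(L)/[\pi]\mup(L)$ one-dimensional.

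A second point to check is the ``only if'' direction of (ii). If $f>1$, then $[L:K]+f>[L:K]+1$, so a spanning set of that size has images that are linearly dependent in $\ltml/[\pi]\ltml$. Nakayama's lemma then lets me remove some element and still span, so $\mathcal{S}_L$ is not minimal.
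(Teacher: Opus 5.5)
Your proposal is correct and follows the paper's proof. Part (i) reruns the level-by-level Nakayama suppression of Theorem \ref{basisthm1}, keeping only the level $j=qv_L(\pi)/(q-1)$ that Theorem \ref{regisolem} no longer lets you discard, and part (ii) compares $|\mathcal{S}_L|=[L:K]+f$ with $\dim_k \ltml/[\pi]\ltml=[L:K]+1$, which comes from $\ltml\cong O_K/\pi^m\times O_K^{[L:K]}$; your explicit argument that an inclusion-minimal spanning set must map to a $k$-basis of $\ltml/[\pi]\ltml$ simply spells out what the paper leaves implicit when it equates minimality with having $1+[L:K]$ elements.
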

\begin{proof}
To prove (i), one can adapt the proof of Theorem \ref{basisthm1}. As pointed out in Remark \ref{rmk410}, when $K_\pi \subseteq L$ we need only add back the level $j=\displaystyle{\frac{qv_L(\pi)}{q-1}}$ to the set in Theorem \ref{basisthm1} to obtain a spanning set of $\ltml$. After adding this level back into our generating system, we are left with the spanning set $\mathcal{S}_{L}$, where 
$$|\mathcal{S}_{L}|=f\lb \left\lfloor \frac{qv_L(\pi)}{q-1}\right\rfloor -  \left\lfloor \frac{v_L(\pi)}{q-1}\right\rfloor \rb +f =fv_L(\pi)+f=[L:K]+f.$$
For the second part of the theorem, recall that for any finite extension $L|K$, we have an isomorphism of $O_K$-modules $\ltml \cong \mup(L) \times O_K^{[L:K]}$ by Theorem \ref{decomplemma}. Since $K_\pi \subseteq L$, $\mup(L)=\mupn$ for some $n\geq 1$. By general Lubin-Tate theory, we know that $\mupn \cong O_K/\pi^n$. Hence, we can write 
$$\ltml\cong O_K/\pi^n \times O_K^{[L:K]}.$$
It follows by the structure theorem for finitely generated modules over a PID that the minimal number of generators of $\ltml$ as an $O_K$-module is $1+[L:K]$. We therefore conclude that $\mathcal{S}_L$ is a minimal spanning set if and only if $f=1$ (equivalently, $L|K$ is totally ramified).
\end{proof}

\begin{cor}\label{corspan}
For a finite extension $L|K$ with $K_\pi \subseteq L$, uniformiser $\varpi \in O_L$, and residue field degree $f$, the set 
    $$\widetilde{\mathcal{S}}_L:=\left \{ \lpi(\zeta_i \varpi^j)\mid 1\leq i \leq f, \, 1\leq j < \frac{qv_L(\pi)}{q-1}, \ q\nmid j \right\}\cup \left\{ \lpi(\zeta_i\varpi^{\frac{qv_L(\pi)}{q-1}}) \mid 1\leq i \leq f\right\}$$
    spans the additive group $\lpi(\ltml)$ as an $O_K$-module, where the $\zeta_i$'s in $O_L$ are any lift of a $k$-basis for $k_L$.
\end{cor}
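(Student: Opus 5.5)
This follows directly from Theorem \ref{genspanthm}(i), using that the logarithm is an $O_K$-module homomorphism. By Theorem \ref{genspanthm}(i), every $x \in \ltml$ can be written as
$$x=\Fsum_{s\in \mathcal{S}_L}\,\,\,\,\,\,[a_s](s) \quad \text{for some } a_s \in O_K.$$
Here $\Fsum$ denotes the formal-group sum in $\ltml$. Since $\mathcal{S}_L$ is finite, this is a finite $F$-sum, so no convergence issue arises.

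Next, apply $\lpi$ to this expression and use the two structural identities from Section \ref{background}. The first is $\lpi(F(x,y))=\lpi(x)+\lpi(y)$, which is the defining additivity of the formal logarithm; it holds as an identity of power series and specializes at $x,y\in\mml$ because all series converge on $\mml$. The second is $\lpi([a](x))=a\lpi(x)$ for $a\in O_K$, which is the $O_K$-linearity of $\lpi:\mathcal{F}(\mmc)\to\bbc_K$ quoted from \cite{silverman}. Together these give
$$\lpi(x)=\sum_{s\in\mathcal{S}_L} a_s\,\lpi(s),$$
so every element of $\lpi(\ltml)$ lies in the $O_K$-span of $\widetilde{\mathcal{S}}_L=\lpi(\mathcal{S}_L)$.

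The reverse inclusion is immediate: each $\lpi(s)$ lies in $\lpi(\ltml)$, and $\lpi(\ltml)$ is an $O_K$-submodule of $L$, being the image of a module homomorphism. Hence the span of $\widetilde{\mathcal{S}}_L$ is exactly $\lpi(\ltml)$.

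There is no real obstacle here. The one point to note is that, unlike Theorem \ref{basisthm2}, we claim only spanning, not a basis. Indeed $\lpi$ now has nontrivial kernel $\mup(L)\neq\{0\}$ by Lemma \ref{kernel}, so injectivity cannot be used. Spanning, however, requires only surjectivity of $\lpi:\ltml\to\lpi(\ltml)$, which holds by definition.
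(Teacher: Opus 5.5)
Your proposal is correct and is the same argument as the paper's. The paper's one-line proof observes that $\lpi:\ltml\to\lpi(\ltml)$ is a surjective $O_K$-module homomorphism, so it carries the spanning set $\mathcal{S}_L$ of Theorem \ref{genspanthm}(i) to a spanning set of $\lpi(\ltml)$; you have simply written out the $F$-linear combination and applied the additivity and $O_K$-linearity of $\lpi$ explicitly.
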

\begin{proof}
    The logarithm is surjective onto its image and will send any spanning set of $\ltml$ to a spanning set of $\lpi(\ltml)$.
\end{proof}
As a consequence of Corollary \ref{corspan}, 
\begin{equation}\label{cortocor}
    \min_{y \in \lpi(\ltml)}(v_L(y))=\min_{y \in \widetilde{S}_L}(v_L(y)).
\end{equation}
For $\pi$-regular extensions $L|K$, (\ref{logval}) in Theorem \ref{genlogvallem} gave a formula for the valuation of any element in $\lpi(\ltml)$, and we used this in Theorem \ref{minvalthm} to determine the minimal element in $v_L(\lpi(\ltml))$. For arbitrary finite extensions $L|K$, (\ref{logval}) may not be true: if $\lambda$ is a nontrivial zero of $\px$ and $L=K(\lambda)$, then $\lambda$ is prime in $O_L$ so $v_L(\lambda)=1$. Since $\lpi(\lambda)=0$ and $\lpi:\mathcal{F}(\dpil) \longrightarrow \dpil$ is injective, $\lambda \notin \dpil$. Blindly applying Theorem \ref{genlogvallem} gives
$$\ell_\lambda=\llf \log_q\lb \frac{q-1}{v_L(\lambda)(q-1)}\rb\rrf +1=\llf \log_q(1)\rrf+1=1,$$
so
$v_L(\lpi(\lambda))=q-v_L(\pi)=q-(q-1)=1$, contradicting  $\lpi(\lambda)=0 \implies v_L(\lpi(\lambda))=\infty$.

To give a correct generalisation of Theorem \ref{genlogvallem}, we need only replace equality in (\ref{logval}) with `$\geq$': 

\begin{theorem}\label{genvalthm2}
For a finite extension $L|K$ with $x \in \mml$, let $\ell_x$ be the smallest nonnegative integer satisfying $[\pi^{\ell_x}](x) \in \dpil$. Then
\begin{equation}\label{logval2}
    v_L(\lpi(x)) \geq q^{\ell_x} v_L(x)-{\ell_x} v_L(\pi).
\end{equation}
Moreover,
\begin{gather*}
    \ell_x= \begin{cases}
        0 \quad \quad \quad \quad \quad \quad \,\,\quad \quad \quad \,\,\,\, \text{ if } x \in \dpil, \\
        \llf\log_q\lb \frac{v_L(\pi)}{v_L(x)(q-1)}\rb \rrf +1\,\,\,\, \text{ otherwise}
    \end{cases}
\end{gather*}
where $\log_q$ denotes the base $q$ logarithm in $\real$. 
\end{theorem}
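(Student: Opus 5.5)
We follow the proof of Theorem \ref{genlogvallem} and record where $\pi$-regularity entered; the only change is that one equality becomes an inequality. The formula for $\ell_x$ carries over verbatim. That part of the earlier argument used only Lemma \ref{genlemgen}, namely $\pn(x)\equiv x^{q^n}\bmod\dpil$, together with the definition of $\dpil$. Neither ingredient refers to torsion, so $\pn(x)\in\dpil$ if and only if $q^n v_L(x)>\frac{v_L(\pi)}{q-1}$, and this gives the stated expression for $\ell_x$.

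For (\ref{logval2}), the case $x\in\dpil$ (so $\ell_x=0$) is immediate, because $\lpi$ is an isometry $\mathcal{F}(\dpil)\to\dpil$. In this case equality holds.

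For $x\notin\dpil$, we repeat the computation leading to (\ref{logval.4}). Since $[\pi^{\ell_x}](x)\in\dpil$, the isometry gives $v_L(\lpi([\pi^{\ell_x}](x)))=v_L([\pi^{\ell_x}](x))$. On the other hand, $O_K$-linearity gives $v_L(\lpi([\pi^{\ell_x}](x)))=\ell_x v_L(\pi)+v_L(\lpi(x))$. Together these yield
$$v_L(\lpi(x))=v_L([\pi^{\ell_x}](x))-\ell_x v_L(\pi).$$
Both steps hold with no regularity hypothesis, including when some $[\pi^{\ell_x}](x)=0$, where both sides are infinite. It therefore suffices to prove $v_L([\pi^{\ell_x}](x))\geq q^{\ell_x}v_L(x)$.

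This is the one point where the earlier proof used $\pi$-regularity, through Remark \ref{rmk1}, to obtain equality in the boundary case. Here we only want the lower bound, which we get by iterating the well-definedness part of Theorem \ref{FV}. That part holds for every finite $L|K$: if $v_L(y)=m\le\frac{v_L(\pi)}{q-1}$, then $v_L([\pi](y))\ge\min(qm,\,m+v_L(\pi))=qm$.

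The argument from the earlier proof, again via Lemma \ref{genlemgen} and minimality of $\ell_x$, shows $q^{k}v_L(x)\le\frac{v_L(\pi)}{q-1}$ for $0\le k\le\ell_x-1$. We then induct on $k$, aiming for $v_L([\pi^k](x))\ge q^k v_L(x)$.

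The main obstacle is that the intermediate value $v_L([\pi^k](x))$ may be strictly larger than $q^k v_L(x)$, and may even exceed $\frac{v_L(\pi)}{q-1}$, so the hypothesis of Theorem \ref{FV} cannot be applied directly at the next step. We handle this with the general estimate $v_L([\pi](y))\ge\min(q\,v_L(y),\,v_L(y)+v_L(\pi))$, which follows from (\ref{pnxshape}), together with the monotonicity of $t\mapsto\min(qt,\,t+v_L(\pi))$. Suppose $v_L([\pi^k](x))\ge q^k v_L(x)$ and $q^kv_L(x)\le \frac{v_L(\pi)}{q-1}$. Then
$$v_L([\pi^{k+1}](x))\ge\min\bigl(q\cdot q^kv_L(x),\,q^kv_L(x)+v_L(\pi)\bigr)=q^{k+1}v_L(x).$$
Taking $k=\ell_x-1$ gives the desired bound, and (\ref{logval2}) follows.
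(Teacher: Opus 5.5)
Your proposal is correct and follows essentially the same route as the paper. You keep the identity $v_L(\lpi(x))=v_L([\pi^{\ell_x}](x))-\ell_x v_L(\pi)$ and the bound $q^{\ell_x-1}v_L(x)\le \frac{v_L(\pi)}{q-1}$ from the proof of Theorem \ref{genlogvallem}. You then replace the regularity-dependent isomorphism of Remark \ref{rmk1} with the well-definedness half of Theorem \ref{FV}, which is exactly what the paper's sketch does, and your induction simply spells this out. The ``obstacle'' you flag does not actually arise: $q^k v_L(x)<\frac{v_L(\pi)}{q-1}$ strictly for $k\le \ell_x-2$, so (\ref{valpix}) gives $v_L([\pi^k](x))=q^k v_L(x)$ exactly for $k\le \ell_x-1$; your monotonicity argument is harmless and covers that case anyway.
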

\begin{proof}
Since the proof of Theorem \ref{genvalthm2} is very similar to that of Theorem \ref{genlogvallem}, we omit the details and provide only a sketch of the main idea.
 In the proof of Theorem \ref{genlogvallem} we showed for $x \in \mml \sm \dpil$ and $\ell_x$ as in the statement of the theorem that
\begin{equation}\label{rmkpf.0}
    v_L(\lpi(x))=v_L([\pi^{\ell_x}](x))- \ell_xv_L(\pi)
\end{equation}
and
\begin{equation}\label{rmkpf.1}
    v_L(x) \leq \frac{v_L(\pi)}{q^{\ell_x-1}(q-1)}
\end{equation}
so $v_L([\pi^{\ell_x}](x))=q^{\ell_x}v_L(x)$ by Remark \ref{rmk1} since $L|K$ is $\pi$-regular, thus proving the result. Whilst (\ref{rmkpf.0}) and (\ref{rmkpf.1}) did not depend on the $\pi$-regularity of $L|K$, the application of Remark \ref{rmk1} did: for arbitrary finite extensions $L|K$, Remark \ref{rmk1} implies $v_L([\pi^{\ell_x}](x))=q^{\ell_x}v_L(x)$ only when the inequality in (\ref{rmkpf.1}) is strict. In the case that we have equality in (\ref{rmkpf.1}), Remark \ref{rmk1} can only imply 
\begin{equation}\label{rmkpf.2}
    v_L([\pi^{\ell_x}](x))\geq q^{\ell_x}v_L(x).
\end{equation}
In any case, (\ref{rmkpf.2}) and (\ref{rmkpf.0}) prove 
(\ref{logval2}) if $x \notin \dpil$. The remaining details of the proof follow exactly as in Theorem \ref{minvalthm}. 
\end{proof}
Due to the inequality in (\ref{logval2}) not necessarily being equality when $K_\pi \subseteq L$, we can not necessarily pin down the element of minimal valuation in $\lpi(\ltml)$ for general finite extensions $L|K$. It is also not clear a priori how to whittle down the spanning set $\widetilde{\mathcal{S}}_L$ (Corollary \ref{corspan}) to get a basis of $\lpi(\ltml)$. For what remains, we will specialise to the case that $L$ is a Lubin-Tate extension of $K$. 

Fix $n\geq 1$ and a generator $\lambda$ of the cyclic $O_K$-module $\mupn$. Set $L=\kpn=K(\lambda)$. The following is a corollary of Theorem \ref{genspanthm}.

\begin{cor}\label{mincor}
    Fix $n\geq 1$ and a generator $\lambda$ of the $O_K$-module $\mupn$. The set of $q^n-q^{n-1}+1$ elements 
$$\mathcal{S}_{n}:= \left \{ \lambda^j \mid \ 1\leq j \leq q^n-1, q\nmid j\right  \} \cup \{\lambda^{q^n}\}$$
is a minimal set of generators for the $O_K$-module $\ltmn$.
\end{cor}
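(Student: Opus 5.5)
This is a direct specialisation of Theorem \ref{genspanthm} to $L=\kpn$ with the uniformiser $\varpi=\lambda$, and I would prove it in two steps: first check the hypotheses and parameters, then read off the set.

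\textbf{Hypotheses.} Since $\kpn \supseteq K_\pi$ (because $[\pi^{n-1}](\lambda)$ is a nonzero element of $\mu_{[\pi]}$), the extension $L=\kpn$ is not $\pi$-regular and Theorem \ref{genspanthm} applies. By the background on Lubin-Tate theory, $\kpn|K$ is totally ramified of degree $q^{n-1}(q-1)$, and $\lambda$ is a prime of $\kpn$. So I take $\varpi=\lambda$ and $f=1$, and for the lift of a $k$-basis of $k_L=k$ I take $\zeta_1=1$.

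\textbf{Parameters.} We have $v_L(\pi)=q^{n-1}(q-1)$, hence
$$\frac{qv_L(\pi)}{q-1}=q^n .$$
The index set in $\mathcal{S}_L$ therefore becomes $1\leq j<q^n$ with $q\nmid j$, together with the extra level $j=q^n$. So $\mathcal{S}_L$ is exactly $\mathcal{S}_n$.

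\textbf{Counting.} The number of $j$ with $1\leq j\leq q^n-1$ and $q\nmid j$ is $q^n-q^{n-1}$. Adding the one extra element $\lambda^{q^n}$ gives $q^n-q^{n-1}+1=[L:K]+f$, consistent with Theorem \ref{genspanthm}(i).

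\textbf{Conclusion.} Part (i) of Theorem \ref{genspanthm} shows that $\mathcal{S}_n$ spans $\ltmn$. Since $f=1$, i.e.\ $L|K$ is totally ramified, part (ii) shows that this spanning set is minimal.

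There is no real obstacle here. The only point to check is that the choices $\zeta_1=1$ and $\varpi=\lambda$ are legitimate, and both are allowed because Theorem \ref{genspanthm} holds for any lift and any uniformiser.
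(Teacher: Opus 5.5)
Your proposal is correct and follows the paper's route exactly: the paper simply records this as a corollary of Theorem \ref{genspanthm}, and your specialisation $L=\kpn$, $\varpi=\lambda$, $f=1$, $\zeta_1=1$, $v_L(\pi)=q^{n-1}(q-1)$ (so $qv_L(\pi)/(q-1)=q^n$) is precisely the bookkeeping it leaves implicit. One point could be said more precisely: $K_\pi\subseteq\kpn$ holds because the nonzero element $[\pi^{n-1}](\lambda)\in\kpn$ generates the cyclic $O_K$-module $\mu_{[\pi]}$, so all of $\mu_{[\pi]}$ lies in $\kpn$.
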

\begin{theorem}\label{ltbasisthm}
     Fix $n\geq 1$ and a generator $\lambda$ of the $O_K$-module $\mupn$.  The set of $q^n-q^{n-1}$ elements 
    $$\mathcal{B}_n:=\{\lpi(\lambda^j) \mid 2\leq j \leq q^n-1, \ q\nmid j\} \cup \{\lpi(\lambda^{q^n})\}$$
    is a basis of the additive group $\lpi(\ltmn)$ as an $O_K$-module. 
\end{theorem}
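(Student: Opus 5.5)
By Corollary \ref{mincor}, the set $\mathcal{S}_n$ generates $\ltmn$ as an $O_K$-module. Since the logarithm is an $O_K$-linear surjection onto its image, the set $\{\lpi(s)\mid s\in\mathcal{S}_n\}$ spans $\lpi(\ltmn)$. This set consists of $\mathcal{B}_n$ together with the single extra element $\lpi(\lambda)$, which is $0$ by Lemma \ref{kernel} because $\lambda\in\mupn$. Hence $\mathcal{B}_n$ spans $\lpi(\ltmn)$. The plan is therefore to show that $\mathcal{B}_n$ is a spanning set of the correct size, and then conclude by a rank count.

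For the count, the elements $\lambda^j$ with $1\le j\le q^n-1$ and $q\nmid j$ number $(q^n-1)-(q^{n-1}-1)=q^n-q^{n-1}$. Removing $j=1$ and adding $\lambda^{q^n}$ leaves $|\mathcal{B}_n|=q^n-q^{n-1}=[\kpn:K]$. We must also check that the listed elements are pairwise distinct as elements of $\lpi(\ltmn)$, or else reduce to counting only distinct generators. The cleanest route avoids this entirely: by Theorem \ref{freelem}, $\lpi(\ltmn)\cong O_K^{[\kpn:K]}$ is free of rank $q^n-q^{n-1}$. A surjection $O_K^{m}\to O_K^{m}$ sending the standard basis to a spanning family indexed by $\mathcal{B}_n$ (with $m=q^n-q^{n-1}$) is a surjective endomorphism of a finitely generated module, hence an isomorphism. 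Equivalently, reduce modulo $\pi$: $m$ vectors spanning an $m$-dimensional $k$-space form a basis, and Nakayama's lemma lifts this. Therefore any spanning family of exactly $m$ elements is a basis, with no separate distinctness check needed.

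The main step to verify carefully is that Corollary \ref{mincor} really applies with $\varpi=\lambda$. This requires that $\lambda$ is a uniformiser of $\kpn$ (cited from \cite[Theorem 2]{1965}), that $f=1$ so one may take $\zeta_1=1$, and that $v_{\kpn}(\pi)=q^{n-1}(q-1)$, which gives $qv_L(\pi)/(q-1)=q^n$. Once these are in place, $\mathcal{S}_{L}$ in Theorem \ref{genspanthm} becomes exactly $\mathcal{S}_n$. Beyond that, the only delicate point is the vanishing $\lpi(\lambda)=0$, which explains why the index $j=1$ is dropped while the top level $\lambda^{q^n}$ is retained.
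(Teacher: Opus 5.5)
Your proposal is correct and follows the paper's proof. Like the paper, you push the spanning set $\mathcal{S}_n$ of Corollary \ref{mincor} through $\lpi$, discard $\lpi(\lambda)=0$, and conclude by matching $|\mathcal{B}_n|=q^n-q^{n-1}$ against the rank given by Theorem \ref{freelem}. Your extra checks only make explicit what the paper leaves implicit: that Corollary \ref{mincor} is Theorem \ref{genspanthm} with $\varpi=\lambda$, $f=1$ and $v_L(\pi)=q^{n-1}(q-1)$, and that $m$ generators of a free rank-$m$ module form a basis.
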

\begin{proof}
    Let $\mathcal{S}_n$ denote the spanning set of $\ltmn$ in Corollary \ref{mincor}. Since the logarithm is surjective onto its image, the set $$\lpi(\mathcal{S}_n)=\{\lpi(\lambda^j) \mid 1\leq j \leq q^n-1\} \cup \{\lpi(\lambda^{q^n})\}$$ spans the additive group $\lpi(\ltmn)$ as an $O_K$-module. Since $\lambda \in \mupn$, we have $\lpi(\lambda) =0$, so $j=1$ can be omitted from the spanning set $\lpi(\mathcal{S}_n)$ and what remains, $\mathcal{B}_n$, will still generate $\lpi(\ltmn)$ as an $O_K$-module. Since $|\mathcal{B}_n|=q^n-q^{n-1}=[\kpn:K]$ and $\lpi(\ltmn)$ is a free $O_K$-module of rank $[\kpn:K]$ by Theorem \ref{freelem}, it follows that $\mathcal{B}_n$ is a basis of $\lpi(\ltmn)$ as an $O_K$-module.
\end{proof}
We end with a simple example in the multiplicative $p$-adic setting.
\begin{example}
Set $K=\qp$ and $[p](X)=(1+X)^p-1$ so $\log_{[p]}(X)=\log(1+X)$ is the usual $p$-adic logarithm. Let $L=\qp(\zeta_p)$ where $\zeta_p$ is a primitive $p$-th root of unity and pick $\lambda=\zeta_p-1.$ By Theorem \ref{ltbasisthm}, the set of $p-1$ elements
$$\mathcal{B}:=\{\log(1+(\zeta_p-1)^j) \mid 2\leq j \leq p\}$$
is a basis of the additive group $\log(1+\mm_{\qp(\zeta_p)})$ as a $\zp$-module. Since $\log(1+\mm_{\qp(\zeta_p)})=\mm_{\qp(\zeta_p)}^2$ \cite{ontheimage, cycloimage}, it follows that $\mathcal{B}$ is a basis of $\mm_{\qp(\zeta_p)}^2$ as a $\zp$-module, equipped with its usual module structure. A standard basis of $\mm_{\qp(\zeta_p)}^2$ as a $\zp$-module is
\begin{equation}
  \label{standard}  \{ (\zeta_p-1)^j \mid 2\leq j \leq p\}.
\end{equation}
We can recover (\ref{standard}) from the basis $\mathcal{B}$ by noting that
\begin{align*}
    \log(1+(\zeta_p-1)^j) &=\sum_{n\geq 1}\frac{(-1)^{n-1}(\zeta_p-1)^{jn}}{n} \equiv(\zeta_p-1)^j \bmod{\mml^{j+1}}
\end{align*}
since the only exception to $jn-v_L(n) \geq j+1$ for $n \geq 1$ when $j\geq 2$ and $p\geq 2$ is when $n=1$. 
\end{example}

\end{document}